\let\mathcal\mathscr
\newtheorem{theorem}{Theorem} 
\newtheorem{conjecture}{Conjecture}
\newtheorem{lemma}[theorem]{Lemma}
\theoremstyle{definition}
\newtheorem*{notation}{Notation} 
\newtheorem{remark}{Remark}
\numberwithin{theorem}{section}
\numberwithin{equation}{section}
\numberwithin{remark}{section}
\DeclareSymbolFont{bbold}{U}{bbold}{m}{n}
\DeclareSymbolFontAlphabet{\mathbbold}{bbold}
\newcommand{\Q}{\mathbb{Q}}
\newcommand{\F}{\mathbb{F}}
\newcommand{\N}{\mathbb{N}}
\newcommand{\C}{\mathbb{C}}
\newcommand{\R}{\mathbb{R}}
\newcommand{\Z}{\mathbb{Z}}
\renewcommand{\l}{\left}
\renewcommand{\r}{\right}
\renewcommand{\b}{\mathbf}
\renewcommand{\c}{\mathcal}
\renewcommand{\epsilon}{\varepsilon}
\renewcommand{\leq}{\leqslant}
\renewcommand{\geq}{\geqslant}
\renewcommand{\#}{\sharp}
\renewcommand{\gg}{\ggg}
\renewcommand{\ll}{\lll}
\newcommand{\OO}{\mathcal{O}}
\newcommand{\places}{\Omega_K}
\newcommand{\archplaces}{{\Omega_\infty}}
\newcommand{\vt}{{\boldsymbol{\theta}}}
\newcommand{\vpsi}{{\boldsymbol{\psi}}}
\newcommand{\vv}{\mathbf{v}}
\newcommand{\ve}{\mathbf{e}}
\newcommand{\ideals}{\mathcal{I}_K}
\newcommand{\id}[1]{\mathfrak{#1}}
\newcommand{\ppp}{\id{p}}
\newcommand{\aaa}{\id{a}}
\newcommand{\bbb}{\id{b}}
\newcommand{\ccc}{\id{c}}
\newcommand{\ddd}{\id{d}}
\newcommand{\eee}{\id{e}}
\newcommand{\norm}{\mathfrak{N}}
\newcommand{\where}{\ :\ }
\newcommand{\classrep}{\mathfrak{r}}
\newcommand{\abs}[1]{\left|#1\right|}
\newcommand{\absv}[1]{\left|#1\right|_v}
\newcommand{\vecnorm}[1]{\left\lVert #1 \right\rVert}
\newcommand{\vic}{\underline{\ccc}}
\newcommand{\vib}{\underline{\bbb}}
\newcommand{\via}{\underline{\aaa}}
\newcommand{\one}{{\mathbf{1}}}
\newcommand\qr[2]{\left(\frac{#1}{#2}\right)}
\newcommand\qrp[1]{\qr{#1}{\ppp}}
\newcommand\card{\#}
\newcommand\DD{\mathcal{D}}
\newcommand\www{\id{W}}
\newcommand\BBB{\id{B}}
\newcommand{\locdegv}{{m_v}}
\newcommand\dg{m}
\newcommand\bad{\text{bad}}
\DeclareMathOperator{\res}{Res}
\DeclareMathOperator{\vol}{vol}
\begin{document}

\title[
Generalised divisor sums over number fields
]
{
Generalised divisor sums of binary forms over number fields
}

\author{Christopher Frei}
\address{Technische Universit\"at Graz\\
Institut f\"ur Analysis und Zahlentheorie\\
Kopernikusgasse 24/II, A-8010 Graz, Austria
\\}
\email{frei@math.tugraz.at}

\author{Efthymios Sofos}
\address{
Mathematisch Instituut Leiden\\
Universiteit Leiden\\
Snellius building, Niels Bohrweg 1, 2333 CA Leiden, Netherlands\\}
\email{e.sofos@math.leidenuniv.nl}

\begin{abstract} 
Estimating averages of 
Dirichlet convolutions
$1 \ast \chi$,
for some real Dirichlet character $\chi$
of fixed modulus,
over the sparse set of
values of binary
forms
defined over $\Z$
has been the focus of extensive investigations
in recent years,
with spectacular applications to Manin's conjecture for Ch\^atelet surfaces.
We introduce a far-reaching generalization of this problem, 
in particular replacing $\chi$ by
Jacobi symbols with both arguments having varying
size,
possibly tending to infinity.
The main results
of this paper  
provide asymptotic
estimates and lower bounds
of the expected order of magnitude
for the corresponding averages.
All of this is performed
over arbitrary
number fields
by adapting
a technique of Daniel specific to $1\ast 1$.
This is the first time that divisor sums 
over values of binary forms
are asymptotically evaluated 
over any 
number field other than $\Q$.
Our work
is
a 
key
step
in
the proof,
given in subsequent work,
of the lower bound 
predicted by
Manin's
conjecture
for all del Pezzo 
surfaces
over all number fields,
under
mild
assumptions on the Picard
number.
\end{abstract}

\subjclass[2010]{11N37 
 (11N56, 11N64)
 }

\maketitle

\setcounter{tocdepth}{1}

\tableofcontents

\section{Introduction}
\label{intro}
Our aim 
in this paper 
is to 
study
averages of
arithmetic functions
that 
generalise
the divisor function over values of binary forms, defined over
arbitrary number fields.
\subsection{Divisor sums.}
\label{sec:divsum}Estimating averages of
arithmetic functions
is among the primary objects of
analytic number theory and its
applications
to surrounding areas.
Owing to their connection
with
$L$-functions,
two of the most
studied examples
are the divisor 
and the representation function
of sums of two integer squares,
respectively 
given
by 
\[
\tau(n):=
\sum_{\substack{d\in \N \\ d|n}}
\
1
\
\
\text{and}
\
\
r(n):=
4
\!
\sum_{\substack{d\in \N \\ d \ \text{odd} \\ d|n}}
\l(
\frac{-1}{d}
\r)
,\]
where $(\frac{-1}{\cdot})$
denotes the Jacobi symbol,
see for example~\cite[Chapter XII]{titch}.
It is possible to obtain 
level of distribution results,
a problem first studied by Selberg
and Linnik; 
research on this problem
is currently active
due to advances in estimating sums of 
trace functions over finite fields,
see for example~\cite{levelof},
where the ternary divisor function is studied.

Asymptotically estimating 
the average of these
functions
over the sparse set of 
values of
general
integer 
polynomials in a single variable
is naturally harder.
It is only the case of degree $1$ and $2$
polynomials
that has been settled,
see the work of Hooley~\cite{quadra} and of
Duke, Friedlander and Iwaniec~\cite{invention}.
The closely related problem 
regarding integer binary forms
was  
studied later. 
Let us introduce some notation 
to help us
describe previous work on this area. 
For a positive integer
$n$ and each 
$1\leq i \leq n$,
let 
$F_i \in \Z[s,t]$
be forms,
coprime in pairs,
and for any 
constants 
$c_i \in \{1,-1\}$
set 
$
\mathfrak{C}
=
\Big\{(F_i,c_i),
i=1,\ldots,n
\Big\}
$
and
\begin{equation}
\label{eq:gendivsumT}
D(\mathfrak{C};X)
:=
\sum_{\substack{
(s,t) \in (\Z\cap[-X,X])^2 
\\F_i(s,t) \neq 0
}}
\
\
\prod_{i=1}^n
\
\bigg(
\hspace{-0,1cm}
\sum_{\substack{d_i\in \N 
\\ d_i \ \text{odd}
\\ d_i |F_i(s,t)}}
\hspace{-0,2cm}
\l(
\frac{c_i}{d_i}
\r)
\bigg)
,\end{equation}
where the restriction to odd $d_i$ is present only when $c_i=-1$.
The case of degree $3$ was first studied
by Greaves~\cite{greaves},
who obtained an asymptotic for
$D(\mathfrak{C};X)$
when 
$\mathfrak{C}=\{(F,1)\}$
and $F$ is any irreducible form with $\deg(F)=3$
via the use of  
exponential sums.

Extending this result to higher degrees was considered intractable for a long time
until the highly influential work of Daniel~\cite{stephan},
who employed
geometry
of
numbers
to treat the case 
$\mathfrak{C}=\{(F,1)\}$
for any irreducible form $F$ with $\deg(F)=4$.
Developing this approach to allow
negative
$c_i$, 
Heath-Brown~\cite{HB4L}
later tackled the
case
where
$n=4$,
each $c_i$ is $-1$
and all forms $F_i$ are linear.

It was subsequently
realised 
that proving asymptotics 
whenever
$\sum_{i=1}^n\deg(F_i)=4$
would constitute a key step
towards the resolution of Manin's conjecture
for Ch\^{a}telet surfaces over $\Q$.
This is a conjecture in arithmetic geometry
and
regards 
counting rational points of bounded height
on Fano varieties defined over arbitrary number fields;
it was introduced by Manin and his collaborators~\cite{fmt}
in $1989$
and has subsequently
given rise to
a long standing  
research program
that still continues.
Thus, Browning and de la Bret{\`e}che 
reworked later
the case
$\mathfrak{C}=\{(L_i,-1):1\leq i \leq 4\}$,
where each form
$L_i$ is linear in~\cite{compoloi},
the case $\mathfrak{C}=\{(C,-1),(L,-1)\}$,
where $\deg(C)=3$,
$\deg(L)=1$ in~\cite{cubelinear},
and recently Destagnol settled the case 
$\mathfrak{C}=\{(Q,-1),(L_1,-1),(L_2,-1)\}$
with $\deg(Q)=2$, $\deg(L_i)=1$
in~\cite{destagnol}.
In addition,
Browning and de la Bret{\`e}che 
treated the case 
$\mathfrak{C}=\{(Q,1),(L_1,1),(L_2,1)\}$
with $\deg(Q)=2$, $\deg(L_i)=1$ in~\cite{LLQ};
this investigation
formed a significant part in their
proof of Manin's conjecture 
for a smooth quartic del Pezzo surface
for a first time~\cite{dp4}. 
The remaining cases in the divisor sum problem 
with
$\sum_{i=1}^n\deg(F_i)=4$
require
a further development of Daniel's approach,
one that necessitates
the use of
a generalisation of Hooley's delta function~\cite{hooldel}.
This was achieved independently
by
Br\"{u}dern~\cite{bruedern} and 
de la Bret{\`e}che
with Tenenbaum~\cite{MR2927803},
enabling the settling of the 
cases
$\mathfrak{C}=\{(F_1,-1)\}$
and 
$\mathfrak{C}=\{(F_2,-1),(F_3,-1)\}$,
where the forms satisfy 
$\deg(F_1)=4$ and $\deg(F_2)=\deg(F_3)=2$
in~\cite{MR3103132}.

It should be remarked that 
each
work following Daniel
came into fruition only for integer forms $F_i$
fulfilling
a
list 
of extra assumptions regarding the
small prime divisors and the sign of 
the integers
$F_i(s,t)$
as $(s,t)$ ranges through certain regions in $\R^2$.
It will be crucial for our work that 
Daniel's approach
is able of providing
a
polynomial saving in the error term
if
$\sum_{i=1}^n \deg(F_i)=3$
but not when 
$\sum_{i=1}^n \deg(F_i)=4$,
while it has never been extended to any case 
with
$\sum_{i=1}^n \deg(F_i)>4$.

Lastly,
the spectacular work of
Matthiesen~\cite{lilianplms},~\cite{lilacta} and~\cite{mat13},
using tools from
additive combinatorics,
tackled 
all cases where 
$\sum_{i=1}^n \deg(F_i)$
can be arbitrarily large
under the restriction that 
each $F_i$ is linear.
Naturally, this approach does not yield an explicit error term.
\subsection{Generalised divisor sums.}
\label{sec:connection}
In our forthcoming joint work~\cite{fibpub}
with 
Loughran, 
we study Manin's conjecture in dimension 2.
As a
special corollary
we obtain
the lower bound predicted by Manin for all del Pezzo surfaces over all number fields, only under 
mild
assumptions regarding the Picard 
number.
For del Pezzo surfaces of degree $1$ in particular, 
tight lower bounds were not known before,
not even in special cases.
The underlying strategy 
is to 
use
algebro-geometric arguments
to
translate the 
problem into one of estimating 
averages that are a vast
generalisation of the ones appearing in~\eqref{eq:gendivsumT}. 
The success of this strategy
therefore 
relies heavily
on
a very general conjecture
concerning the growth order
of
our
divisor sums;
its precise statement is recorded
in Conjecture~\ref{conj:H}.
In this paper
we prove it 
in all cases
that we need  
for
our
applications to Manin's conjecture, see Theorem~\ref{thm:main 1}. 
In the very
special case
that the base
field is $\Q$,
dealing with a del Pezzo surface of degree $1\leq d \leq 5$
gives birth to
averages of the
rough shape
\begin{equation}
\label{eq:gendivsum}
\sum_{\substack{
(s,t) \in (\Z\cap[-X,X])^2 
\\F_i(s,t) \neq 0
\\(s,t)\equiv (\sigma,\tau) \bmod q
}}
\
\
\prod_{i=1}^n
\
\
h\!\l(F_i(s,t)\r)
\
\bigg(
\hspace{-0,1cm}
\sum_{\substack{d_i\in \N 
\\ d_i \ \text{odd}
\\ d_i |F_i(s,t)}}
\hspace{-0,2cm}
\l(
\frac{G_i(s,t)}{d_i}
\r)
\bigg)
,\end{equation}
where 
$\sigma,\tau,q$
are positive integers,
$h$ is a 
``small''
arithmetic function,
each
$F_i, G_i$
is an integer binary form
with
$\deg(G_i)$ 
divisible by $2$,
all forms
$F_i$ irreducible
and satisfying
\[
\sum_{i=1}^n
\deg(F_i)
=8-d
,\]
which is an integer in the range $\{3,\ldots,7\}$.
Our assumption on $h$
is that it can be written as
$h=1\ast f$, where 
$\ast$ denotes the Dirichlet convolution
and $f$
is a  
multiplicative function on $\N$
that satisfies 
$f(m)=O(\frac{1}{m})$
for $m \in \N$. 
We shall 
call 
a sum as 
in~\eqref{eq:gendivsum}
a \textit{generalised divisor sum}.
This is because $G_i$ are not constants and hence 
the terms are no more a product of multiplicative functions on $\N$
restricted at values of binary forms. 
A further new trait lies in the fact that 
a
\textit{level of distribution}
result 
is required with respect to the modulus $q$,
such a result has not
appeared previously 
for divisor sums over values of polynomials or forms.
In particular,
we shall be able to handle the case
$h(n)=1$ for all $n \in \N$,
thus our results are a true generalisation of previous work
and not a different problem.

A supplementary
aspect of our work is that we estimate asymptotically, for the first time,
divisor sums over values of binary forms in
arbitrary number fields, see
Theorem~\ref{thm:main 2}. 
Thus, 
one 
of the 
central
innovations in our work
lies in
revealing how to extend Daniel's
approach
to this setting.
We shall
rely on a
lattice point counting
theorem of Barroero and Widmer \cite{arXiv:1210.5943}, based on the framework of o-minimal structures.
It is important to note here that
the essence of Daniel's approach lies in taking advantage of the,
possibly large on average,
size of the first successive minima to produce 
a sufficiently small
error term.
Directly adapting
this approach 
to number fields
yields an error term 
whose order
supersedes
the main term;
this would  
preclude the proof of both  
Theorems~\ref{thm:main 1} and~\ref{thm:main 2}.
We shall introduce an 
artifice that
overcomes this difficulty, 
namely we shall
modify Daniel's method by
taking into account not only the first, but also higher successive minima of the lattice.
 
Let us finally state that 
it is not clear what is the expected growth order   
for generalised divisor sums.
The  
r\^{o}le
of
Conjecture~\ref{conj:H}
is to provide an answer
in terms of 
various number fields generated by roots of $F_i(s,1)$.
It is important to note
that
our conjecture will turn out to be 
in agreement with the growth order predicted by Manin's conjecture for surfaces;
this will be revealed in~\cite{fibpub}.

\subsection{Statement of our set-up.}
\label{sec:set-up} 
Throughout  this paper, $K$ will be a number field
of degree $m=[K:\Q]$, whose ring of integers is denoted by 
$\c{O}_K$.
By $\ppp$
and
$\ppp_i$ we always denote non-zero 
prime ideals
of 
$\c{O}_K$
and 
$v_\ppp$
is the $\ppp$-adic exponential evaluation.
\subsubsection{\textbf{Systems of binary forms.}}
\label{sec:systems binary} 
We consider
finite sets 
of pairs of
binary forms
\[
\mathfrak{F}
=
\Big\{(F_i,G_i),
i=1,\ldots,n
\Big\}
,\]
where each $F_i,G_i \in \c{O}_K[s,t]$
is
such that 
$F_i$ is irreducible
and does not divide 
$G_i$ in $K[s,t]$.
Moreover,
we assume that all $F_i$
are coprime over $K$
in pairs
and that each $\deg(G_i)$ is even.

We next define the \textit{rank}
of
$\mathfrak{F}$,
which will be an invariant of $\mathfrak{F}$
that will characterize
the growth order in Conjecture~\ref{conj:H}.
If $F_i$ is proportional to $t$, we denote $\vt_i:=(1,0)$. 
Otherwise, letting $\overline{K}$ be a fixed algebraic closure of $K$, we set $\theta_i \in \overline{K}$ to be a fixed root of $F_i(x,1)$, and $\vt_i := (\theta_i,1)$. Let $K(\vt_i)$ be the subfield of $\overline{K}$ generated by $K$ and the coordinates of $\vt_i$.
We define the {\bf rank} of $\mathfrak{F}$ to be the cardinality
\begin{equation*}
  \rho(\mathfrak{F}) := \card
\big\{1\le i \le n : G_i(\vt_i)\in K(\vt_i)^{\times 2}
\big\}
,
\end{equation*}
where, for any field $k$, we denote the set of its non-zero squares by $k^{\times 2}$.

\subsubsection{\textbf{The group $\c{U}_K$.}}
\label{sec:groupp}
The terms involving the 
function $h$
in~\eqref{eq:gendivsum}
have the r\^{o}le of insignificant modifications. 
We proceed to introduce them precisely.
Letting $\ideals$ denote the monoid of non-zero integral ideals
of $\OO_K$,
$\norm\aaa$ be the absolute norm of $\aaa \in \ideals$
and
$\mu_K$ the M\"{o}bius function on $\ideals$
allows us to introduce the set of functions 
\[
\c{Z}_K:=
\left\{
f:\ideals \to (-1,\infty)
:
\begin{array}{l}
f \ \text{multiplicative}, \\
f(\ppp)\ll_f \frac{1}{\norm\ppp}
\
\text{for all} \
\ppp,
\\
f(\aaa)=0 \text{ if }\mu_K(\aaa)=0
\end{array}
\right\}
.\]
For each  $f \in \c{Z}_K$, we subsequently
define
another
function
$\one_f:\ideals \to (0,\infty)$
given by
\[
\one_f(\aaa):=
\prod_{\ppp|\aaa}
(1+f(\ppp))=(1*f)(\aaa)
.\]
This then allows us to form 
the following set of
positive multiplicative
 functions on $\ideals$,
\begin{equation}
  \label{eq:class 1}
\c{U}_K:=
\big\{
\one_f: 
f \in \c{Z}_K
\big\}
.\end{equation}
The growth condition placed on $f$
indicates that $\one_f$
behaves on average like a constant function.
Note that 
for all $f \in \c{Z}_K$ and $\epsilon>0$
we have 
\begin{equation}\label{doobop}
\one_f(\aaa)\ll_{f,\epsilon} \norm\aaa^\epsilon
,\end{equation}
and moreover, that the set $\c{U}_K$
forms a group under pointwise multiplication.
This will be used often
with the aim of
simplifying the exposition, 
for example 
via replacing terms like 
$\one_{f_1} \one_{f_2}$ or $1/\one_{f_3}$,
where $f_i \in \c{Z}_K$,
by
$\one_f$
for some $f \in \c{Z}_K$.
\subsubsection{\textbf{$\bf{\mathfrak{F}}$\textbf{-admissibility}}}
\label{sec:admissible}
As usual, we shall identify all
completions $K_v$ at
archimedean places $v$ with $\R$ or $\mathbb{C}$. We shall thus let
$
  K_\infty := K\otimes_\Q\R = \prod_{v\mid\infty}K_v,
$
which we identify with $\R^\dg$ via 
$\C\cong \R^2$. In addition, we shall denote by $\DD$ 
a set of the form
$\DD=\prod_{v|\infty} \DD_v$,
where  $\DD_v\subseteq K_v^2$ is a compact ball of positive radius.  
Fixing an integral ideal $\classrep\in\ideals$, we shall consider \emph{$\classrep$-primitive} 
points $(s,t)\in\OO_K^2$, by which we mean that $s\OO_K + t\OO_K = \classrep$.
For an ideal $\www$ of $\OO_K$ divisible by $2\classrep$, and $\aaa\in\ideals$, we define the ideal
\begin{equation}\label{flats}
  \aaa^\flat := \prod_{\substack{\ppp\nmid \www}}\ppp^{v_\ppp(\aaa)},
\end{equation}
and for $a\in\OO_K\smallsetminus\{0\}$, we
let $a^\flat := (a\OO_K)^\flat$. 
Keep in mind that this notion depends on $\www$. 
Let $\sigma,\tau\in\OO_K$ be
such that $\sigma\OO_K + \tau\OO_K = \classrep$.
The symbol $\c{P}$ will
refer exclusively throughout this paper to triplets of the form
\[\c{P}=(\c{D},(\sigma,\tau),\www),\]
where
$\c{D},(\sigma,\tau),\www$ are as above. Given any system of forms $\mathfrak{F}$
as 
in \S \ref{sec:systems binary}, 
a triplet $\c{P}$ and a parameter
$X\geq 1$, we let
\[
  M^*(\c{P},X):=
\{(s,t)\in\classrep^2\cap X^{1/m}\DD \where (s,t)\equiv (\sigma,\tau)\bmod \www
\text{, }s\OO_K + t\OO_K = \classrep\}
\]
and
\[
  M^*(\c{P},\infty) := \bigcup_{X\geq 1}M^*(\c{P},X).
\]
We shall say that $\c{P}$ is
$\bf{\mathfrak{F}}$-\textbf{admissible}
if each of the following conditions~\eqref{eq:f not zero}--\eqref{eq:qr is one}
holds:
\begin{equation}
\label{eq:f not zero}
F_i(\sigma,\tau)\neq 0 \
\text{ for all } 1\leq i \leq n,
\end{equation} 
and
whenever $(s,t)\in M^*(\c{P},\infty)$
we have
\begin{equation}
\label{eq:D not zero}
F_i(s,t)\neq 0 \ \text{ for all } 1\leq i \leq n,
\end{equation}
as well as 
\begin{equation}
\label{eq:qr is one}
\qr{G_i(s,t)}{F_i(s,t)^\flat} = 1
\
\text{ for all } 1\leq i \leq n
.\end{equation}
In the last condition, we used the \emph{Jacobi symbol} for $K$, which is defined as follows: for $a\in\OO_K$ and a non-zero ideal $\bbb = \ppp_1^{e_1}\cdots \ppp_l^{e_l}$,
with distinct prime ideals $\ppp_i$,
none of which lies above $2$, we let
\begin{equation*}
  \qr{a}{\bbb} := \prod_{i=1}^l\qr{a}{\ppp_i}^{e_i},
\end{equation*}
where $\qrp{a}$ is the Legendre
quadratic residue symbol for $K$.
\subsection{\textbf{Lower bound conjecture for generalised divisor sums}}
\label{sec:gensum}
For any $\mathfrak{F}$
as in \S \ref{sec:systems binary}, 
any
function $f \in \c{Z}_K$ and any
triplet $\c{P}$,
we define the
function $r:M^*(\c{P},\infty)\to [0,\infty)$ by
\[
r(s,t) = r(\mathfrak{F},f,\c{P};s,t)
:=
\prod_{i=1}^{n}
\one_{f}(F_i(s,t)^\flat)
\l(\sum_{\ddd_i|F_i(s,t)^{\flat}}\qr{G_i(s,t)}{\ddd_i}\r)
.\]
We are now in the position to 
introduce
\textbf{generalised divisor sums}
as averages of the form
\begin{equation*}
D(\mathfrak{F},f,\c{P};X)
:=
\sum_{\substack{(s,t) \in M^*(\c{P},X)}}
r(\mathfrak{F},f,\c{P};s,t).
\end{equation*}
The special case of the following claim
corresponding to each $G_i$ being constant and $K=\Q$
ought to be familiar, at least among experts,
but has not yet
appeared in text.
\begin{conjecture}[Lower bound conjecture for divisor sums]
\label{conj:H}
Let $K$ be a number field, fix $\classrep\in\ideals$, let $f\in\c Z_K$, and let $\mathfrak{F}$ be a system of forms
as in \S \ref{sec:systems binary}. 
Then there exists a finite set 
$
S_{\bad}
=
S_{\bad}({\mathfrak{F},f,\classrep})$
of prime ideals in $\c{O}_K$,
such that for all
$\mathfrak{F}$-admissible triplets $\c{P}$ with $\www$ being 
divisible by each $\ppp \in S_{\text{bad}}$,
we have
\[
D(\mathfrak{F},f,\c{P};X)
\gg X^2\left(\log X \right)^{\rho(\mathfrak{F})}, \text{ as }X \to \infty
.\]
The implicit constant may depend on every parameter except $X$.
\end{conjecture}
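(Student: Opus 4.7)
The plan is to evaluate $D(\mathfrak{F},f,\c{P};X)$ asymptotically and deduce the lower bound from the positivity of its leading constant. After writing $\one_f=1\ast f$ and swapping summation orders, $D$ becomes a weighted sum over tuples of ideals $(\ddd_i)_{i=1}^n$ and residue classes $(\alpha_i,\beta_i)\bmod\ddd_i$ with $\ddd_i\mid F_i(\alpha_i,\beta_i)$. Each term consists of a bounded multiplicative weight (arising from $f$), the constant value of $\qr{G_i(s,t)}{\ddd_i}$ on the selected class, and the count $N(\c{P},X;\ldots)$ of $(s,t)\in M^*(\c{P},X)$ lying in the combined residue class. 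The nonnegativity of each local factor $\sum_{e=0}^{v_\ppp(F_i(s,t))}\qr{G_i(s,t)}{\ppp}^e$, together with the admissibility condition~\eqref{eq:qr is one}, rules out sign cancellation on the squarefree part and ensures $D\geq 0$ termwise.

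For each fixed tuple $(\ddd_i,\alpha_i,\beta_i)$, the combined congruence conditions (along with $(s,t)\equiv(\sigma,\tau)\bmod\www$ and $s\OO_K+t\OO_K=\classrep$) cut out a sublattice $\Lambda\subseteq\classrep^2$ of index $\asymp\norm\www^2\prod_i\norm\ddd_i$. Barroero--Widmer's theorem applied to the compact set $X^{1/m}\DD$ produces
\[
N = \frac{X^2\vol(\DD)}{\det\Lambda} + O\bigl(E(\Lambda)\bigr),
\]
where $E(\Lambda)$ involves the successive minima $\lambda_1(\Lambda),\ldots,\lambda_{2m}(\Lambda)$. Substituting the main term and performing standard Chebotarev/Hensel analysis at each prime $\ppp\nmid\www$ yields an Euler product whose local factor at $\ppp$ in position $i$ is (up to bounded corrections) $\sum_{e\geq 0}\qr{G_i(\vt_i)}{\PPP}^e\norm\ppp^{-e}$ for $\PPP$ a prime of $K(\vt_i)$ above $\ppp$.

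For $i\in R:=\{i:G_i(\vt_i)\in K(\vt_i)^{\times 2}\}$, which has cardinality $\rho(\mathfrak{F})$, the character $\qr{G_i(\vt_i)}{\cdot}$ is trivial, so the Euler product in position $i$ diverges at $s=1$ and, after partial summation via Mertens, contributes a factor $\log X$. For $i\notin R$, the character is a nontrivial real character of the idele class group of $K(\vt_i)$, the Euler product converges to the positive value $L(1,\chi_i)>0$ attached to the quadratic extension $K(\vt_i)(\sqrt{G_i(\vt_i)})/K(\vt_i)$, positivity being Dirichlet's class number formula for number fields. Combining, we obtain $D(\mathfrak{F},f,\c{P};X)\asymp X^2(\log X)^{\rho(\mathfrak{F})}$ with a positive leading constant, which implies the claimed lower bound.

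The main obstacle is controlling the aggregate error $\sum_{(\ddd_i,\alpha_i,\beta_i)} E(\Lambda)$. As emphasised in \S\ref{sec:connection}, a direct adaptation of Daniel's approach over $\Q$, saving only through the largeness of the first successive minimum $\lambda_1(\Lambda)$, produces an error that dominates the main term in the number field setting. The plan, following the paper's key innovation, is to use \emph{all} successive minima $\lambda_1(\Lambda),\ldots,\lambda_{2m}(\Lambda)$, invoked through Minkowski's second theorem, to secure the polynomial saving uniformly over the tuples. Ramified and other problematic small primes are absorbed into the finite exceptional set $S_{\bad}$, concluding the argument.
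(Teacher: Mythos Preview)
The statement you are attempting to prove is a \emph{conjecture}; the paper does not prove it in general, only the special case $c(\mathfrak{F})\leq 3$ (Theorem~\ref{thm:main 1}). Your proposal, which aims for a full asymptotic for $D(\mathfrak{F},f,\c{P};X)$, would in particular settle the conjecture for all complexities, and would already be new over $\Q$ for $\sum_i\deg F_i\geq 4$. The paper explicitly remarks (end of \S\ref{sec:divsum} and end of \S\ref{sec:results}) that Daniel's method, even with the higher-successive-minima refinement, yields a power saving only when the total degree involved is $\leq 3$ and has never been pushed beyond $4$; this is precisely why the restriction $c(\mathfrak{F})\leq 3$ appears in Theorem~\ref{thm:main 2}.

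The concrete gap is therefore your final paragraph: ``use all successive minima to secure the polynomial saving uniformly over the tuples'' is an aspiration, not an argument. When you swap summation orders and sum over all tuples $(\ddd_i)$ with $\norm\ddd_i$ up to (a power of) $X^{\deg F_i}$, the aggregate of the Barroero--Widmer error terms is governed by the computation in Lemma~\ref{cramped}. There the key bound $X^{1+j/m+\epsilon}Y^{1/4-j/(4m)}\leq X^{2-1/(4m)+\epsilon}$ relies on $Y\ll X^{c(\mathfrak{F})}$ with $c(\mathfrak{F})\leq 3$; for larger complexity the error swamps the main term and your asymptotic collapses. Using all $2m$ minima instead of only $\lambda_1$ is necessary to get this far, but it is not sufficient beyond complexity $3$.

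For comparison, the paper's route to Theorem~\ref{thm:main 1} is structurally different from yours: it does \emph{not} evaluate $D$ asymptotically. Instead it uses nonnegativity of $r(s,t)$ to (i) replace, for each $i$ with $G_i(\vt_i)\in K(\vt_i)^{\times 2}$, the full divisor sum by the truncated squarefree one $\sum_{\ddd_i\mid F_i(s,t),\,\norm\ddd_i\leq X^\epsilon}\mu_K^2(\ddd_i)$ (Lemma~\ref{lem:divisor reduction}); (ii) apply the asymptotic Theorem~\ref{thm:main 2} only to the remaining rank-$0$ system $\mathfrak{F}'$ of complexity $\leq 3$, in arithmetic progressions modulo $\ddd=\prod\ddd_i$; and (iii) sum the resulting main term over the small $\ddd_i$ via a Wirsing-type theorem to recover $(\log X)^{\rho(\mathfrak{F})}$. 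This two-step reduction is what keeps the error-term calculation within the tractable regime, and it is the essential idea you are missing.
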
 
It should be stated that
the appearance of $G_i,f$ and $\c{P}$
in
Conjecture~\ref{conj:H}, as well as the consideration of arbitrary number fields,
are absolutely necessary for our
applications to Manin's conjecture in~\cite{fibpub}. 
The presence of
the set of bad primes 
$S_{\text{bad}}$
can be avoided;
it is only included here to minimise the technical details in the present work.

We next supply
heuristical 
evidence supporting 
that 
Conjecture~\ref{conj:H} 
does in fact 
provide the true order of magnitude
of
$D(\mathfrak{F},f,\c{P};X)$.
Firstly,
there are
about
$X^2$ summands
and 
each term
$\one_{f}(F_i(s,t)^\flat)$
behaves as a constant on average,
since our 
conditions on $\mathfrak{F}$
suggest that the integral ideals $F_i(s,t)^\flat$
behave randomly.
Secondly,
as we shall see in Lemma~\ref{lem:jacobi trivial},
if the index $i$ contributes towards the rank
$\rho(\mathfrak{F})$
then the Jacobi symbols
$\qr{G_i(s,t)}{\ddd_i}$
assume the value $1$, while
in the opposite case
they take
both values
$1$ and $-1$ 
with equal probability.
Consequently,
in the former case
the sum over 
$\ddd_i|F_i(s,t)^{\flat}$
will resemble the divisor function in $\ideals$,
thus contributing a logarithm,
while
in the latter case
it will be approximated by a 
constant on average owing to the cancellation of the Jacobi symbols.
A subtle point 
here is that if one
does not impose condition~\eqref{eq:qr is one}
then 
the implied constant in the lower bound
could vanish, so the restriction to admissible triplets is necessary.
Furthermore,
each
work referenced in~\S\ref{sec:divsum}
is in agreement with 
Conjecture~\ref{conj:H}
when  
$K=\Q$
and
$G_i=\pm 1$.
Lastly,
the work of
de la Bret{\`e}che
and Browning~\cite{upperdlbtb}
can be used to provide
a matching upper bound over $\Q$ 
whenever 
each $G_i$ is constant.

The main purpose of this paper is to prove
Conjecture \ref{conj:H} 
under a condition regarding only
the
{\bf complexity} of $\mathfrak{F}$,
which we define by
\begin{equation*}
  c(\mathfrak{F}):=
\hspace{-0,3cm}
\sum_{\substack{1\leq i\leq n\\G_i(\vt_i)\notin K(\vt_i)^{\times 2}}}
\hspace{-0,3cm}
\deg F_i  
,\end{equation*}
but without a restriction on
the value of 
$\sum_{i=1}^n\deg(F_i)$ or the factorisation type of $\prod_{i=1}^n F_i$.
\begin{theorem}
\label{thm:main 1}
Conjecture~\ref{conj:H} holds for all 
$K$,
$\classrep$, 
$f$
and
systems of forms
$\mathfrak{F}$ with $c(\mathfrak{F})\leq 3$.
\end{theorem}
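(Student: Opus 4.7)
The plan is to adapt Daniel's geometry-of-numbers method to the number field setting, combining it with the Barroero--Widmer lattice-point counting theorem together with the higher-successive-minima refinement announced in the introduction. I expect the bulk of the work to go into producing a uniform error term for a twisted count of lattice points in certain sublattices of $\classrep^2 \subset K_\infty^2\cong\R^{2m}$.

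The first step is to partition the indices $1\leq i\leq n$ into the \emph{square} set $\mathcal{S}=\{i:G_i(\vt_i)\in K(\vt_i)^{\times 2}\}$, of cardinality $\rho(\mathfrak{F})$, and its complement $\mathcal{N}$, which by hypothesis satisfies $\sum_{i\in\mathcal{N}}\deg F_i=c(\mathfrak{F})\leq 3$. A preliminary lemma (the one alluded to in the introduction when the authors remark that if $i$ contributes to the rank then $\qr{G_i(s,t)}{\ddd_i}=1$) should show, after enlarging $S_{\bad}$ to include ramified primes, primes above $2$, and primes of bad reduction of the Kummer extensions $K(\vt_i,\sqrt{G_i(\vt_i)})/K(\vt_i)$ for $i\in\mathcal{S}$, that $\qr{G_i(s,t)}{\ddd_i}=1$ for every $i\in\mathcal{S}$ and every $\ddd_i\mid F_i(s,t)^\flat$. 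For such indices the inner sum collapses to an ideal-divisor count, and these divisor counts are what supply, on average, the factor $(\log X)^{\rho(\mathfrak{F})}$.

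Next, I would interchange the summations over $(s,t)$ and over tuples $\underline{\ddd}=(\ddd_i)_{1\leq i\leq n}$ of pairwise coprime ideals satisfying $\ddd_i\mid F_i(s,t)^\flat$. The congruences $\ddd_i\mid F_i(s,t)$ cut out a full sublattice $\Lambda(\underline{\ddd})\subset\classrep^2$ of covolume $\asymp\prod_i\norm{\ddd_i}^{\deg F_i}$ up to local factors. The key estimate becomes a uniform asymptotic for the weighted count
\[
N(\underline{\ddd},X) := \sum_{(s,t)\in\Lambda(\underline{\ddd})\cap X^{1/m}\c{D}}\prod_{i\in\mathcal{N}}\qr{G_i(s,t)}{\ddd_i},
\]
which I would produce by appealing to Barroero--Widmer in o-minimal families: this yields a main term of size $\asymp X^2/\mathrm{covol}(\Lambda(\underline{\ddd}))$ together with an error expressed in terms of the successive minima $\lambda_1,\dots,\lambda_{2m}$ of $\Lambda(\underline{\ddd})$. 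Over $\Q$, Daniel's original argument extracts a polynomial saving from $\lambda_1$ alone whenever the complexity is at most $3$; directly copying this over a number field produces an error that supersedes the main term, so, following the artifice highlighted in \S\ref{sec:connection}, I would stratify the $\underline{\ddd}$-range according to the simultaneous sizes of $\lambda_1,\dots,\lambda_m$ and sum the resulting contributions separately.

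The main obstacle, and the central technical content of the argument, is precisely this uniform twisted-lattice estimate together with its summation over $\underline{\ddd}$. The hypothesis $c(\mathfrak{F})\leq 3$ enters exactly at this point: the saving available from Daniel's method barely suffices to beat the main term when the total degree of the non-square forms is at most $3$, and fails otherwise. Once the estimate is in place, the main term is assembled prime by prime through Mertens-type evaluations, producing a positive Euler product; the admissibility condition~\eqref{eq:qr is one} is used to verify that each local factor at $\ppp\not\in S_{\bad}$ is bounded below away from zero, exactly as the authors have already flagged, so that neither the Euler product nor the overall constant vanishes. Combining the logarithmic factor from the square indices with the positive density from the non-square indices and with the mild $\one_f$-weights then yields the required lower bound $D(\mathfrak{F},f,\c{P};X)\gg X^2(\log X)^{\rho(\mathfrak{F})}$.
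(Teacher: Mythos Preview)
Your plan has the right ingredients --- the square/non-square split, the collapse of the Jacobi symbol to $1$ on the square indices, lattice counting via Barroero--Widmer, the higher-minima refinement --- but there is a structural gap that prevents the argument from closing under the hypothesis $c(\mathfrak{F})\leq 3$ alone.

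The problem is that you open up \emph{all} the divisor sums simultaneously and then attempt to control the lattice-counting error summed over the full tuple $\underline{\ddd}=(\ddd_i)_{1\leq i\leq n}$. For the indices $i\in\mathcal{S}$ there is no character cancellation, and the divisors $\ddd_i$ range (with or without a hyperbola trick) up to essentially $X^{(\deg F_i)/2}$. Daniel's method, even with the higher-successive-minima improvement, produces a power saving only when the total product $\prod_i\norm\ddd_i$ stays below roughly $X^{3/2}$, i.e.\ when $\sum_{i=1}^n\deg F_i\leq 3$. The hypothesis of the theorem constrains only $c(\mathfrak{F})=\sum_{i\in\mathcal{N}}\deg F_i\leq 3$; the square part $\sum_{i\in\mathcal{S}}\deg F_i$ is unrestricted and can be arbitrarily large. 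As written, your error term summed over $\underline{\ddd}$ will swamp the main term whenever $\mathcal{S}\neq\emptyset$ and the total degree exceeds $3$. (Incidentally, the covolume of $\Lambda(\underline{\ddd})$ is $\asymp\prod_i\norm\ddd_i$, not $\prod_i\norm\ddd_i^{\deg F_i}$; each divisibility $\ddd_i\mid F_i(s,t)$ is, after fixing a root $\lambda_i$, a single congruence modulo $\ddd_i$.)

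The paper's remedy, which your proposal lacks, is to exploit that Conjecture~\ref{conj:H} asks only for a \emph{lower bound}. For each $i\in\mathcal{S}$ one minorises the divisor factor by the sub-sum over square-free $\ddd_i$ with $\norm\ddd_i\leq X^\epsilon$, and then absorbs all of these into the modulus of an arithmetic progression, yielding a total extra modulus $\ddd=\prod_{i\in\mathcal{S}}\ddd_i$ of norm at most $X^{\rho(\mathfrak{F})\epsilon}$. What remains is exactly a generalised divisor sum for the reduced system $\mathfrak{F}'=\{(F_i,G_i):i\in\mathcal{N}\}$, which has $\rho(\mathfrak{F}')=0$ and $c(\mathfrak{F}')=c(\mathfrak{F})\leq 3$; this is handled by Theorem~\ref{thm:main 2}, whose power-saving \emph{level-of-distribution} error $O(X^{2-\beta_1}\norm\ddd^{\beta_2})$ absorbs the $X^\epsilon$-sized modulus for $\epsilon$ small. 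Summing the resulting main term over the small $\ddd_i$ via a Wirsing-type estimate then supplies the factor $(\log X)^{\rho(\mathfrak{F})}$. Thus the reduction to Theorem~\ref{thm:main 2}, and in particular its level-of-distribution aspect, is not cosmetic packaging: it is the mechanism that makes the complexity $c(\mathfrak{F})$ rather than the total degree $\sum_i\deg F_i$ the operative constraint.
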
  
Theorem \ref{thm:main 1} will be reduced to Theorem \ref{thm:main 2}, 
whose statement is 
given in \S\ref{sec:results}. 
\begin{remark}
As an immediate consequence of~\cite[Theorem 1.6]{fibpub}, 
we will see that Conjecture~\ref{conj:H} implies Zariski
density of rational points on conic bundle surfaces over number fields, 
under the necessary assumption that
there is a rational point on a smooth fibre. 
This well-known problem is currently open in most cases, see the recent work of Koll\'{a}r and Mella~\cite{mella}.
\end{remark}

\subsection{Skeleton of the paper and further results}
\label{sec:results} The preliminary
parts,
~\S\ref{s:latpoint}
and~\S\ref{s:artin},
respectively, 
provide 
general
counting results,
that are 
not limited to our applications,
for points of certain lattices
and
averaging results 
concerning 
coefficients of 
Artin $L$-functions.

The reduction of 
Theorem~\ref{thm:main 1}
to
Theorem~\ref{thm:main 2}
below
will take place 
in \S\ref{sec:reduction},
while the proof of the latter theorem will be
given
in~\S\ref{sec:divisor sum asymptotics}.
It provides asymptotics in cases where $\sum_{i=1}^n\deg F_i\leq 3$ and $G_i(\vt_i)\notin K(\vt_i)^{\times 2}$ for all $i$, under some further assumptions.  

It is worth following the strategy laid out in our proof of Theorem~\ref{thm:main 2} to
show that,
for
any positive integers 
$\sigma,\tau,d$
and 
fixed irreducible binary forms $F_i$ with $\sum_{i=1}^n \deg(F_i)\leq 3$,
an
asymptotic estimate with a power saving in terms of $X$ and 
a polynomial dependence on $d$
in the error term
holds
for the analogue of the classical divisor
sums 
\[
\sum_{\substack{(s,t) \in (\Z\cap [-X,X])^2
\\
F_i(s,t)\neq 0
\\(s,t)\equiv (\sigma,\tau) \bmod d
}} 
\
\prod_{i=1}^n
\
\bigg(
\sum_{\substack{d_i\in \N 
\\ d_i|F_i(s,t)}} 
1
\bigg)
\]
over any number field.
We refrain from this task
in the present work
to
shorten the exposition.

We proceed by providing the statement of our second theorem.
We say that an $\mathfrak{F}$-admissible triplet $\c{P}=(\c{D},(\sigma,\tau),\www)$ is {\bf strongly $\mathfrak{F}$-admissible}, if,
in addition,
for all $1\leq i\leq n$
and
$(s,t)\in M^*(\c{P},\infty)$
one has
\begin{equation}
  \label{eq:same power}
 F_i(\sigma,\tau)\nequiv 0 \bmod \www\ \text{ and }\  v_\ppp(F_i(s,t)) = v_\ppp(F_i(\sigma,\tau)) \text{ for all }\ppp\mid \www.
\end{equation} 

\begin{theorem}
\label{thm:main 2}
Let $K$ be a number field, $\classrep\in\ideals$
and $f\in\c{Z}_K$. Let $\mathfrak{F}$ be a system of forms with $\rho(\mathfrak F)=0$ and $c(\mathfrak F)\leq 3$. 
Then there is a non-zero ideal $\www_0$ of $\OO_K$ and 
constants $\beta_1,\beta_2>0$, such that  
the following statement
holds.

For every
strongly $\mathfrak F$-admissible triplet
$\c P = (\c D,(\sigma,\tau), \www)$
fulfilling
$\www_0\mid\www$, there are $\beta_0>0$ and a function $f_0\in\c Z_K$, depending only on $\classrep,f,\mathfrak{F},\c D, \www$, such that for each
 $\ddd\in \ideals$
for which the triplet
$\c P_\ddd := (\c D, (\sigma,\tau), \ddd\www)$
satisfies 
\begin{equation}\label{eq:asympt thm divisor cond}
\prod_{i=1}^{n}
F_i(s,t)\www +  \ddd = \OO_K\quad\text{ for all $(s,t)\in M^*(\c{P}_\ddd,\infty)$},
\end{equation}
the asymptotic 
\[
\sum_{\substack{(s,t)\in M^*(\c P_\ddd,X)}}
r(\mathfrak F,f,\c P; s,t)=
\beta_0
\frac{\one_{f_0}(\ddd)}{\norm\ddd^2}
X^2
+
O(X^{2-\beta_1}\norm\ddd^{\beta_2})
\]
holds with an implied constant independent of $\ddd,\sigma,\tau$ and $X$.
\end{theorem}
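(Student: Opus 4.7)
The strategy is to transplant Daniel's geometry-of-numbers approach from $\Z^2$ to $\c{O}_K^2\cong\Z^{2m}$, combining it with two crucial modifications that the hypotheses $\rho(\mathfrak{F})=0$ and $c(\mathfrak{F})\leq 3$ make possible: genuine sign cancellation in the Jacobi symbols $\qr{G_i}{\cdot}$, and the use of \emph{all} successive minima of the relevant sublattices rather than only the first. To begin, I would swap the order of summation: expanding $\one_f=1*f$ and writing out the inner sum over $\ddd_i\mid F_i(s,t)^\flat$, one obtains
\[
D(\mathfrak{F},f,\c{P}_\ddd;X)=\sum_{\via,\vib}\Bigl(\prod_{i=1}^n f(\aaa_i)\Bigr)\sum_{\substack{(s,t)\in M^*(\c{P}_\ddd,X)\\ \aaa_i\bbb_i\mid F_i(s,t)^\flat}}\prod_{i=1}^{n}\qr{G_i(s,t)}{\bbb_i},
\]
where $\via=(\aaa_i)$ and $\vib=(\bbb_i)$ run over $n$-tuples of non-zero ideals. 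For each fixed $\via,\vib,\ddd$, the divisibility, congruence and $\classrep$-primitivity constraints define a sublattice $\Lambda=\Lambda(\via,\vib,\ddd)\subset\c{O}_K^2$; by the coprimality condition~\eqref{eq:asympt thm divisor cond} and the strong admissibility~\eqref{eq:same power}, its covolume factorises so that the $\ddd$-local contribution appears exactly as $\norm{\ddd}^2$, while the $(\via,\vib)$-dependence completely separates from the $\ddd$-dependence. This clean splitting is what will eventually produce the factor $\one_{f_0}(\ddd)/\norm{\ddd}^2$ in the main term, with $f_0$ independent of $\ddd$.

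I would then apply the Barroero--Widmer lattice point theorem of \S\ref{s:latpoint} to count $\Lambda\cap X^{1/m}\c{D}$. Its main term $X^2\vol(\c{D})/\det\Lambda$, summed over $\via,\vib$, assembles into an Euler product whose convergence rests on the results of \S\ref{s:artin}. Because $\rho(\mathfrak{F})=0$, for every index $i$ the element $G_i(\vt_i)$ is a non-square in $K(\vt_i)$; invoking Lemma~\ref{lem:jacobi trivial}, the Jacobi symbols $\qr{G_i(s,t)}{\ppp}$ equidistribute among $\pm 1$ as $\ppp$ varies, through a non-trivial character of a quadratic extension of $K(\vt_i)$. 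This is exactly the bounded-character input required by \S\ref{s:artin}, so each local Euler factor converges, and since no index contributes to the rank, no power of $\log X$ survives --- confirming the predicted size $\beta_0 X^2$ for the main term. Positivity of $\beta_0$ uses~\eqref{eq:qr is one}, which forces each local factor at primes dividing $\www$ to be non-zero.

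The genuine obstacle is the error term, where the complexity bound $c(\mathfrak{F})\leq 3$ is decisive. Mimicking Daniel's original argument verbatim, using only the first successive minimum $\lambda_1(\Lambda(\via,\vib,\ddd))$, produces an error of order $\sum_{\via,\vib} X^{(2m-1)/m}/\lambda_1^{\,2m-1}$, which for $m\geq 2$ swamps the main term $X^2$ --- precisely the difficulty flagged in~\S\ref{sec:connection}. To overcome it, I would exploit the sharper form of the Barroero--Widmer bound,
\[
\Bigl|\,\#(\Lambda\cap X^{1/m}\c{D})-\tfrac{X^2\vol(\c{D})}{\det\Lambda}\Bigr|\ll\sum_{j=0}^{2m-1}\frac{X^{j/m}}{\lambda_1(\Lambda)\cdots\lambda_j(\Lambda)},
\]
and estimate the averaged reciprocal moments $\sum_{\via,\vib}(\lambda_1\cdots\lambda_j)^{-1}$ by relating $\lambda_j(\Lambda)$ to the successive minima of the ideal $\prod_i\aaa_i\bbb_i$ interpreted inside the rings of integers of the fields $K(\vt_i)$. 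The hypothesis $c(\mathfrak{F})\leq 3$ is exactly the threshold at which these higher moments decay fast enough to yield the required power saving $X^{2-\beta_1}\norm{\ddd}^{\beta_2}$ uniformly in $\ddd$, mirroring the $\sum\deg F_i=3$ regime of Daniel's original method where polynomial savings are available. Finally, the ideal $\www_0$ is taken large enough to contain the primes above $2$, the ramified primes of all extensions $K(\vt_i)/K$, and the divisors of the resultants $\res(F_i,F_j)$ and $\res(F_i,G_i)$, so that all local pathologies are absorbed into $\www$ and the asymptotic remains uniform.
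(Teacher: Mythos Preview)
Your outline captures several correct structural elements (swapping summation order, Barroero--Widmer counting, the need for higher successive minima, the role of $\rho(\mathfrak F)=0$ for the main term), but it omits the single step on which the whole power saving hinges: the \emph{Dirichlet hyperbola trick}.

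In the paper's proof (\S\ref{finalfinal}) the sum over $\ccc_i\mid F_i(s,t)^\flat$ is split as $r_i^-(s,t)+r_i^+(s,t)$ according to whether $\norm\ccc_i<\sqrt{Y_i}$ or $\norm\ccc_i^*<\sqrt{Y_i}$, where $\ccc_i\ccc_i^*=F_i(s,t)^\flat$. This is only possible because the admissibility condition~\eqref{eq:qr is one} gives $\qr{G_i(s,t)}{\ccc_i}\qr{G_i(s,t)}{\ccc_i^*}=1$, so the two halves carry the \emph{same} Jacobi symbol. You invoke~\eqref{eq:qr is one} only for positivity of $\beta_0$; in fact positivity comes from the non-vanishing of the Artin $L$-function at $s=1$ (Lemma~\ref{lem:artinakos}), while~\eqref{eq:qr is one} is consumed precisely here. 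Without the hyperbola cut the ideals $\bbb_i$ in your outer sum range up to $\norm\bbb_i\ll Y_i$ rather than $\sqrt{Y_i}$, and the error-term computation collapses. Concretely, Daniel's device sorts the error by the short vector $\vv$ realising $\lambda^{(1)}$; since $\vv\in\Lambda$ forces $\prod_i\bbb_i\mid\prod_iF_i(\vv)$, the lattice determinant gives $\vecnorm{\vv}\ll(\prod_i\norm\bbb_i)^{1/(2m)}$. With the hyperbola cut this is $\ll Y^{1/(4m)}$, and summing $\vecnorm{\vv}^{-(m+j)}$ over this range yields an error $\ll X^{2-1/4+j/(4m)}\leq X^{2-1/(4m)}$ when $c(\mathfrak F)\leq 3$. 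Without it the range is $\vecnorm{\vv}\ll Y^{1/(2m)}$, and the same sum gives $X^{5/2-j/(2m)}$, which exceeds $X^2$ for every $0\leq j\leq m-1$: no power saving is possible, even using all $2m$ successive minima.

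Two smaller points. First, the error-term mechanism you describe (``relating $\lambda_j(\Lambda)$ to the successive minima of $\prod_i\aaa_i\bbb_i$ inside $\c O_{K(\vt_i)}$'') is not what is actually done; the paper follows Daniel in sorting by the minimal vector $\vv$ and using the divisor bound on the number of $\bbb_i\mid F_i(\vv)$, with a separate argument (exploiting $\lambda^{(m+1)}$) for the degenerate case $F_k(\vv)=0$ that arises only when some $F_k$ is linear. Second, the lemma you cite for equidistribution of the Jacobi symbols is the wrong one: Lemma~\ref{lem:jacobi trivial} handles the square case and shows the symbol is identically $1$; the non-square input you need is Lemma~\ref{lem:artinakos}, which identifies the relevant Dirichlet series with an Artin $L$-function and supplies the analytic continuation and convexity bound feeding into Lemma~\ref{lem:twisted sum}.
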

This is the first time that
any divisor sum  over values of binary forms
is asymptotically
evaluated over any number field other than $\Q$.
Even over $\Q$, both Theorems~\ref{thm:main 1}
and~\ref{thm:main 2} are 
novel
due to the appearance of the forms $G_i$.
Furthermore, the 
extra condition that $(s,t)$ lies in a progression,
whose modulus is explicitly recorded in the error term,
gives rise to a new
level of distribution
result, since 
an asymptotic 
holds 
when 
$\norm\ddd 
\leq 
X^{\beta}$
for all $0<\beta<\beta_1/\beta_2$.

The power saving in the error term 
of
Theorem~\ref{thm:main 2} is crucial for deducing
Theorem~\ref{thm:main 1} from it, and therefore for
the application to Manin's conjecture. 
Even in the simple case $K=\Q$,
such a strong error term
can
presently
 only be obtained 
under the assumption
$\sum_{i=1}^n \deg(F_i) \leq 3$, 
which 
is the reason for the
restriction placed on
the complexity
$c(\mathfrak{F})$.

As a first step for the proof of Theorem~\ref{thm:main 2},
we use Dirichlet's hyperbola trick
and 
partition the variables in the summation into a 
small number of lattices;
this is exposed
in~\S\ref{finalfinal}.
The next part, residing in~\S\ref{s:minimal application}, 
consists of  
counting
points 
on these lattices;
it 
is
here 
that the main step towards the power saving in the error term 
in Theorem~\ref{thm:main 2} 
takes place.
Finally,
in \S\S 
\ref{s:simplifi}-\ref{s:ending}
we
prove that the 
average of the contribution of 
each lattice alluded to above
gives the main term 
as
stated
in Theorem~\ref{thm:main 2},
this part contains 
the treatment
of volumes 
of 
slightly
awkward regions 
introduced by the consideration of arbitrary number fields.

\subsection*{Acknowledgements}
We are grateful to
Tim Browning
and
Roger
Heath-Brown
for helpful
suggestions relating to
the proof of Theorem~\ref{thm:main 2}. 
The authors would furthermore
like to thank
Daniel Loughran for
useful
discussions
concerning the presentation of our results.
A part of this work was completed while the second author was supported by London's Mathematical Society's \textit{150th Anniversary Postdoctoral Mobility Grant} to visit G\"{o}ttingen University, the hospitality of which is gratefully acknowledged.

\begin{notation}
The set of places of the number field $K$ will be denoted $\places$
and for each $v \in \places$ we shall let 
$\locdegv := [K_v : \Q_w]$,
where $w$ is the place of $\Q$ below $v$. For $a\in\OO_K$, we write $\norm(a) :=\norm(a\OO_K)=\prod_{v\in\archplaces}\absv{a}^\locdegv$ for the absolute value of its norm. For $s\in K_\infty=\prod_{v\in\archplaces}K_v$ and $v\in\archplaces$, we write $s_v\in K_v$ for the projection of $s$ to $K_v$. 
Furthermore,
for any prime ideal $\ppp$
the $\ppp$-adic exponential valuation on ideals (and elements) of $\OO_K$
will 
be
denoted by $v_\ppp$.
As usual,
the resultant of two binary forms $F,G\in\OO_K[s,t]$
will be represented by $\res(F,G)\in\OO_K$,
while Euler's totient function and the divisor function for non-zero ideals of $\OO_K$ will be denoted by $\phi_K$ and $\tau_K$. Lastly, we shall
choose a system of integral representatives $\mathcal{C}=\{\classrep_1, \ldots, \classrep_h\}$ for the ideal class group of $\OO_K$ 
and fix it once and for all.
Unless the contrary is explicitly stated, the implicit constants in Landau's $O$-notation and Vinogradov's $\ll$-notation are allowed to depend on $K,\mathcal{C},\classrep,f,\mathfrak{F}$ and $\c P$ but no other parameters. 
The exact value of a small positive constant $\epsilon$ will be allowed to vary from expression to expression throughout our work.
\end{notation}

\section{Preliminaries}
\label{sec:preliminaries}
\subsection{Lattice point counting}
\label{s:latpoint}
For any lattice $\Lambda\subset K_\infty^2 = \R^{2m}$, we denote its $i$-th successive minimum (with respect to the unit ball) by $\lambda^{(i)}(\Lambda)$. We write $\vecnorm{\cdot}$ for the Euclidean norm on $\R^{2m}$. For $\aaa, \ddd \in\ideals$ and $\gamma\in\OO_K$, we define the lattice
\begin{equation*}
  \Lambda(\aaa,\ddd,\gamma) := \{(s,t)\in \aaa^2 \where s\equiv \gamma t \bmod \ddd\}.
\end{equation*}
It has determinant proportional to $\norm(\aaa^2\ddd(\aaa+\ddd)^{-1})$, and we write $\lambda^{(i)}(\aaa,\ddd,\gamma):=\lambda^{(i)}(\Lambda(\aaa,\ddd,\gamma))$ for its $i$-th successive minimum. Recall that $\mathcal{C} = \{\classrep_1, \ldots, \classrep_h\}$ is a fixed system of integral representatives of the class group of $K$. Let us prove some facts about the minima $\lambda^{(i)}(\aaa,\ddd,\gamma)$.

\begin{lemma}\label{lem:lattice}
Let $\aaa, \ddd \in \ideals$, $\gamma\in\OO_K$ and $1\leq i\leq 2m$.
  \begin{enumerate}
  \item[$(1)$] Whenever $[\aaa] = [\classrep_q]$ for $1\leq q \leq h$,
we have
    \begin{equation*}
\norm\aaa^{1/m}\lambda^{(i)}(\classrep_q,\classrep_q\ddd(\aaa+\ddd)^{-1},\gamma)\ll\lambda^{(i)}(\aaa,\ddd,\gamma)\ll\norm\aaa^{1/m}\lambda^{(i)}(\classrep_q,\classrep_q\ddd(\aaa+\ddd)^{-1},\gamma).
    \end{equation*}
  \item[$(2)$] For any non-zero ideal $\bbb$ of $\OO_K$, the following estimate holds,
    \begin{equation*}
\lambda^{(i)}(\aaa,\ddd,\gamma)\ll\lambda^{(i)}(\aaa,\bbb\ddd,\gamma)\ll\norm(\bbb)^{1/m}\lambda^{(i)}(\aaa,\ddd,\gamma).
\end{equation*}
  \item[$(3)$] We have $\lambda^{(i)}(\aaa,\ddd,\gamma)\ll \norm(\aaa^2\ddd(\aaa+\ddd)^{-1})^{1/(2m-i+1)}$.
  \end{enumerate}
\end{lemma}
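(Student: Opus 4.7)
The three parts are handled separately.

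For part~(1), the idea is that $[\aaa]=[\classrep_q]$ gives $\aaa=\alpha\classrep_q$ for some $\alpha\in K^\times$, unique up to a unit of $\OO_K^\times$. By Dirichlet's unit theorem I can modify $\alpha$ by a unit so that $|\alpha|_v\asymp\norm\aaa^{1/m}$ at every archimedean place $v$. The ideal identity $\aaa\cap\ddd=\aaa\ddd(\aaa+\ddd)^{-1}$, applied to both $\aaa$ and $\classrep_q$, shows that coordinate-wise multiplication by $\alpha$ defines a bijection $\Lambda(\classrep_q,\classrep_q\ddd(\aaa+\ddd)^{-1},\gamma)\to\Lambda(\aaa,\ddd,\gamma)$. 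As an $\R$-linear map on $K_\infty^2$ it has operator norm $\max_{v\mid\infty}|\alpha|_v\ll\norm\aaa^{1/m}$, and its inverse is bounded by $\ll\norm\aaa^{-1/m}$; applying either map to vectors realising the successive minima yields both inequalities of~(1).

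For part~(2), the left inequality is immediate from the inclusion $\Lambda(\aaa,\bbb\ddd,\gamma)\subseteq\Lambda(\aaa,\ddd,\gamma)$. For the right inequality I would use Minkowski's theorem to pick a $\Z$-basis $\beta_1,\dots,\beta_m$ of $\bbb$ with $|\beta_k|_v\ll\norm(\bbb)^{1/m}$ at every archimedean $v$. Any $(s,t)\in\Lambda(\aaa,\ddd,\gamma)$ then produces $(\beta_k s,\beta_k t)\in\Lambda(\aaa,\bbb\ddd,\gamma)$ with Euclidean norm $\ll\norm(\bbb)^{1/m}\|(s,t)\|$. Given $\R$-linearly independent $v_1,\dots,v_i\in\Lambda(\aaa,\ddd,\gamma)$ realising $\lambda^{(i)}(\aaa,\ddd,\gamma)$, I consider the $mi$ vectors $\beta_k v_j$ and use the observation that $\{\beta_k\}$, being a $\Q$-basis of $K$, is also an $\R$-basis of $K_\infty$: writing $1=\sum c_k\beta_k$ with $c_k\in\R$ exhibits each $v_j$ as an $\R$-combination of $\{\beta_k v_j\}_k$, so the $\R$-span of the $mi$ vectors contains $v_1,\dots,v_i$ and has dimension at least $i$. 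Extracting $i$ linearly independent vectors among them gives the bound.

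For part~(3), I would apply Minkowski's second theorem $\prod_{j=1}^{2m}\lambda^{(j)}(\aaa,\ddd,\gamma)\asymp\det\Lambda(\aaa,\ddd,\gamma)\asymp\norm(\aaa^2\ddd(\aaa+\ddd)^{-1})$ together with the AM--GM bound $|s|_v\geq|\norm s|^{1/m}\geq\norm\aaa^{1/m}$, valid for non-zero $s\in\aaa$, which forces $\lambda^{(1)}(\aaa,\ddd,\gamma)\gg\norm\aaa^{1/m}$. Combining these facts yields $(\lambda^{(i)})^{2m-i+1}\leq\prod_{j\geq i}\lambda^{(j)}\leq\det\Lambda/\prod_{j<i}\lambda^{(j)}\ll\det\Lambda/\norm\aaa^{(i-1)/m}\leq\det\Lambda$, using $\norm\aaa\geq 1$; taking $(2m-i+1)$-th roots gives the claim.

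The crux of the lemma is the right inequality in~(2): scaling a single shortest vector of $\Lambda(\aaa,\ddd,\gamma)$ by a Minkowski-small element of $\bbb$ furnishes only one short vector in $\Lambda(\aaa,\bbb\ddd,\gamma)$ and cannot by itself transport an entire system of $i$ independent minima. The realisation that an integral $\Z$-basis of $\bbb$ is simultaneously an $\R$-basis of $K_\infty$ is what converts the $mi$ candidate vectors $\beta_k v_j$ into a genuine transfer of the full successive-minima system. Parts~(1) and~(3) are comparatively routine, resting on the ideal class structure, Dirichlet's unit theorem, and Minkowski's two theorems.
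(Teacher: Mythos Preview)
Your proposal is correct. Parts~(1) and~(3) match the paper's approach essentially verbatim, though in~(3) your phrase ``the AM--GM bound $|s|_v\geq|\norm s|^{1/m}$'' is false as written (AM--GM bounds the mean, not each individual term); what you mean, and what works, is $\|s\|\gg|\norm s|^{1/m}$ via weighted AM--GM. The paper in fact only uses the weaker and more immediate $\lambda^{(1)}\gg 1$, which follows from $\Lambda(\aaa,\ddd,\gamma)\subseteq\OO_K^2$.

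The real difference is in part~(2). Your argument with a full $\Z$-basis $\beta_1,\dots,\beta_m$ of $\bbb$ is correct but more elaborate than necessary. The paper simply picks a single nonzero $b\in\bbb$ with $|b|_v\asymp\norm\bbb^{1/m}$ at every archimedean place (Minkowski plus Dirichlet, exactly as in part~(1)) and multiplies the whole lattice by this one $b$. Your stated worry---that scaling by a single element ``cannot by itself transport an entire system of $i$ independent minima''---is a misconception: multiplication by any nonzero $b\in K^\times$ is an \emph{invertible} $\R$-linear map on $K_\infty^2$, so it sends $i$ linearly independent vectors $v_1,\dots,v_i$ to $i$ linearly independent vectors $bv_1,\dots,bv_i$, each satisfying $\|bv_j\|\ll\norm\bbb^{1/m}\|v_j\|$. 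Your basis trick is a valid workaround, but it solves a problem that is not actually there.
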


\begin{proof}
 Let $ a\in K\smallsetminus\{0\}$ such that $\aaa = a\classrep_q$. Then the
elements $(s,t)\in\aaa^2$ with $s\equiv\gamma t\bmod \ddd$ are exactly those of the form $(s,t) = a(s_1,t_1)$, with
$
  (s_1,t_1)\in \Lambda(\classrep_q,\classrep_q\ddd(\aaa+\ddd)^{-1},\gamma)=:\Lambda'. 
$
By Dirichlet's unit theorem, we can choose our generator $a$ to satisfy
$
\absv{a}\ll\norm\aaa^{1/m}\ll\absv{a}
$
for all $v\in\archplaces$. Then, for any $(s_1,t_1)\in\Lambda'$ we have 
\begin{equation*}
  \norm\aaa^{1/m}\vecnorm{(s_1,t_1)}\ll \vecnorm{a(s_1,t_1)}\ll \norm\aaa^{1/m}\vecnorm{(s_1,t_1)},
\end{equation*}
which
shows claim
$(1)$.
The first inequality of (2) is clear. For the remaining one, let $b \in \bbb$ such that $\absv{b}\ll \norm\bbb^{1/m} \ll \absv{b}$ for all $v\in\archplaces$
and let $(s,t) \in \Lambda(\aaa,\ddd,\gamma)$. This implies that
$(bs,bt) \in \Lambda(\aaa,\bbb\ddd,\gamma)$ and
$  \vecnorm{(bs,bt)}\ll \norm\bbb^{1/m} \vecnorm{(s,t)}
$.
Assertion (3) flows directly from Minkowski's second theorem combined with the obvious fact that $\lambda^{(1)}(\aaa,\ddd,\gamma)\gg 1$.
\end{proof}

We use the framework of \cite{arXiv:1210.5943}, built on o-minimality, to count points of $\Lambda(\aaa,\ddd,\gamma)$ in fairly general domains. Assume we are given an o-minimal structure that extends the semialgebraic structure. Let $\mathcal{R} \subset \R^{k+2\dg}$ be a definable family, such that for each $T\in \R^k$ the fibre
\begin{equation*}
\mathcal{R}_T:=\{(s,t)\in\R^{2\dg}\mid (T,s,t)\in \mathcal{R}\}
\end{equation*}
is contained in a ball, not necessarily zero-centered, of radius $\ll X_T^{1/m}$ for some $X_T \geq 1$. 
The first part of Lemma~\ref{lem:lattice}
makes
the following lemma an immediate consequence of \cite[Theorem 1.3]{arXiv:1210.5943}.
\begin{lemma}\label{lem:lattice point counting}
Whenever $[\aaa] = [\classrep_q]$ and $T\in\R^k$,
the quantity
$  \card(\Lambda(\aaa,\ddd,\gamma)\cap \mathcal{R}_T)$
equals
  \begin{equation*}
 \frac{c_K\vol\mathcal{R}_T}{\norm(\aaa^2\ddd(\aaa+\ddd)^{-1})} + O\left(\sum_{j=0}^{2m-1}\frac{X_T^{j/m}}{\norm\aaa^{j/m}\prod_{i=1}^j\lambda^{(i)}(\classrep_q,\classrep_q\ddd(\aaa+\ddd)^{-1},\gamma)}\right),
  \end{equation*}
with an explicit positive constant $c_K$ depending only on $K$. The implicit constant in the error term may depend on $K,\mathcal{R}$, but not on $T,\aaa,\ddd,\gamma$.
\end{lemma}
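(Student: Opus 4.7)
The plan is to invoke the uniform lattice point counting theorem of Barroero and Widmer, namely \cite[Theorem 1.3]{arXiv:1210.5943}, directly on the lattice $\Lambda(\aaa,\ddd,\gamma)$ and the definable family $\mathcal{R}$, and then to translate the resulting error term from the successive minima of $\Lambda(\aaa,\ddd,\gamma)$ into the successive minima of the ``normalised'' lattice $\Lambda(\classrep_q,\classrep_q\ddd(\aaa+\ddd)^{-1},\gamma)$ by means of Lemma~\ref{lem:lattice}(1). Thus the proof is essentially a rescaling argument sitting on top of a black-box counting statement.

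First I would verify that the hypotheses of \cite[Theorem 1.3]{arXiv:1210.5943} are met: $\mathcal{R}\subset\R^{k+2m}$ is definable in an o-minimal structure extending the semialgebraic one, and for each parameter $T\in\R^k$ the fibre $\mathcal{R}_T$ lies in some ball of radius $\ll X_T^{1/m}$. Applying that theorem to the full lattice $\Lambda(\aaa,\ddd,\gamma)\subset\R^{2m}$ yields
\[
\card\bigl(\Lambda(\aaa,\ddd,\gamma)\cap\mathcal{R}_T\bigr)
= \frac{\vol\mathcal{R}_T}{\det\Lambda(\aaa,\ddd,\gamma)}
+ O\!\left(\sum_{j=0}^{2m-1}\frac{X_T^{j/m}}{\prod_{i=1}^{j}\lambda^{(i)}(\aaa,\ddd,\gamma)}\right),
\]
where the implicit constant depends only on $K$ and on the family $\mathcal{R}$ (and not on $T$, $\aaa$, $\ddd$, or $\gamma$).

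Next I would compute the determinant. The canonical embedding identifies $\OO_K\hookrightarrow K_\infty=\R^m$ as a lattice of covolume proportional to $|\mathrm{disc}(K)|^{1/2}$, and standard index computations give
\[
\det\Lambda(\aaa,\ddd,\gamma) = c_K\,\norm\!\bigl(\aaa^2\ddd(\aaa+\ddd)^{-1}\bigr),
\]
for an explicit positive constant $c_K$ depending only on $K$; this is the main term as claimed. For the error term, I would invoke Lemma~\ref{lem:lattice}(1), which gives $\lambda^{(i)}(\aaa,\ddd,\gamma)\asymp \norm\aaa^{1/m}\lambda^{(i)}(\classrep_q,\classrep_q\ddd(\aaa+\ddd)^{-1},\gamma)$ whenever $[\aaa]=[\classrep_q]$. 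Substituting these relations term by term into the displayed error sum extracts a factor $\norm\aaa^{-j/m}$ from the product of the first $j$ minima, which is exactly the shape asserted in the lemma.

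There is essentially no analytic obstacle here; the only points requiring care are the book-keeping of constants when moving between $\aaa$-lattices and $\classrep_q$-lattices, and the uniformity of the error term with respect to $T,\aaa,\ddd,\gamma$. Both are transparent once one notes that the implicit constants in Lemma~\ref{lem:lattice}(1) depend only on $K$ and on the choice of the fixed system $\mathcal{C}=\{\classrep_1,\ldots,\classrep_h\}$, and that the implicit constants in \cite[Theorem 1.3]{arXiv:1210.5943} depend only on $K$ and on the definable family $\mathcal{R}$. Combining these two facts completes the proof.
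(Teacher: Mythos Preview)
Your proposal is correct and follows exactly the paper's approach: apply \cite[Theorem 1.3]{arXiv:1210.5943} to $\Lambda(\aaa,\ddd,\gamma)$ and then use Lemma~\ref{lem:lattice}(1) to rewrite the successive minima in terms of those of $\Lambda(\classrep_q,\classrep_q\ddd(\aaa+\ddd)^{-1},\gamma)$; the paper in fact compresses this into a single sentence. The only cosmetic slip is that the constant you call $c_K$ in the determinant formula is the reciprocal of the $c_K$ appearing in the main term of the lemma.
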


Still keeping the notation from above, we now fix an ideal $\classrep\in\ideals$ and assume that $\classrep\mid \aaa$ and that $\aaa+\ddd=\OO_K$. Let $\sigma,\tau\in\classrep$ such that $\sigma\OO_K+\tau\OO_K+\aaa = \classrep$ and define a discrete subset of $K_\infty^2=\R^{2\dg}$ by
\begin{equation}\label{eq:def lattice}
  \Lambda^*(\aaa,(\sigma,\tau),\ddd,\gamma) := 
\left\{
(s,t)\in \classrep^2 \where 
\begin{array}{l}
(s,t)\equiv (\sigma,\tau)\bmod \aaa, \\
s\OO_K+t\OO_K = \classrep, \\
s\equiv\gamma t\bmod \ddd
\end{array}
\right\}
.\end{equation}
Moreover, we require now that
each 
$\mathcal{R}_T$ is
contained in a \emph{zero-centered} ball of radius $\ll X_T^{1/m}$.
\begin{lemma}\label{lem:counting with coprime}
  We have
  \begin{align*}
    \card(\Lambda^*(\aaa,(\sigma,\tau),\ddd,\gamma) \cap \mathcal{R}_T) &- \frac{c_K\vol\mathcal{R}_T}{\zeta_K(2)\norm(\ddd\aaa^2)}\prod_{\ppp\mid\aaa\classrep^{-1}}\left(1-\frac{1}{\norm\ppp^2}\right)^{-1}\prod_{\ppp\mid\ddd}\left(1+\frac{1}{\norm\ppp}\right)^{-1}\\ &\ll 
\sum_{j=0}^{m-1}\frac{X_T^{1+j/m}(\log X_T)\tau_K(\ddd)}{\min_{1\leq q\leq h}\{\lambda^{(1)}(\classrep_q,\classrep_q\ddd,\gamma)^m\lambda^{(m+1)}(\classrep_q,\classrep_q\ddd,\gamma)^j\}}
.
  \end{align*}
Here, $\zeta_K$ is the Dedekind zeta function of $K$
and $\tau_K$ is the divisor function on $\ideals$. The implicit constant in the error term
depends on $K,\classrep,\mathcal{R}$, but not on $T,\aaa,\sigma,\tau,\ddd$
or
$\gamma$. 
\end{lemma}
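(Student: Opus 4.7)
The plan is to detect the primitivity condition $s\OO_K+t\OO_K=\classrep$ in the definition of $\Lambda^*(\aaa,(\sigma,\tau),\ddd,\gamma)$ by M\"obius inversion over the ideals of $\OO_K$, and then apply Lemma~\ref{lem:lattice point counting} to each resulting affine sublattice. Writing $s\OO_K+t\OO_K=\classrep\mathfrak h$ for $(s,t)\in\classrep^2$, the identity $[\mathfrak h=\OO_K]=\sum_{\eee\mid\mathfrak h}\mu_K(\eee)$ together with the equivalence $\eee\mid\mathfrak h\Longleftrightarrow s,t\in\classrep\eee$ yields
\[
\card(\Lambda^*(\aaa,(\sigma,\tau),\ddd,\gamma)\cap\mathcal{R}_T)=\sum_{\eee}\mu_K(\eee)\,N_\eee(\mathcal{R}_T),
\]
where $N_\eee(\mathcal{R}_T)$ counts the intersection of $((\sigma,\tau)+\Lambda(\aaa,\ddd,\gamma))\cap(\classrep\eee)^2$ with $\mathcal{R}_T$. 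Using $\sigma\OO_K+\tau\OO_K+\aaa=\classrep$ and $\aaa+\ddd=\OO_K$, a compatibility check shows that this set is empty unless $\gcd(\eee,\aaa\classrep^{-1})=\OO_K$, in which case it is a coset of $\Lambda(\aaa\eee,\ddd,\gamma)$, because then $\aaa\cap\classrep\eee=\aaa\eee$. Lemma~\ref{lem:lattice point counting} applies to such cosets with unchanged main and error terms, as its proof via \cite[Thm.~1.3]{arXiv:1210.5943} is translation-invariant.

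For the main terms, the identity $\aaa\eee+\ddd=\gcd(\eee,\ddd)$ (valid because $\aaa,\ddd$ are coprime) gives leading coefficient $c_K\vol\mathcal{R}_T\cdot\norm(\gcd(\eee,\ddd))/\norm((\aaa\eee)^2\ddd)$ for each $\eee$. Factoring the squarefree $\eee$ (coprime to $\aaa\classrep^{-1}$) as $\eee=\eee_1\eee_2$ with $\eee_2\mid\prod_{\ppp\mid\ddd}\ppp$ and $\eee_1$ coprime to $\ddd$, the $\eee_1$-sum computes to $\zeta_K(2)^{-1}\prod_{\ppp\mid\aaa\classrep^{-1}\ddd}(1-\norm\ppp^{-2})^{-1}$, while the $\eee_2$-sum gives $\prod_{\ppp\mid\ddd}(1-\norm\ppp^{-1})$. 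Combining the Euler factors at primes dividing $\ddd$ through $(1-\norm\ppp^{-2})^{-1}(1-\norm\ppp^{-1})=(1+\norm\ppp^{-1})^{-1}$ recovers precisely the stated main term.

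For the errors $E_\eee$, I would invoke Lemma~\ref{lem:lattice}$(1)$ to transfer the minima of $\Lambda(\aaa\eee,\ddd,\gamma)$ to those of $\Lambda(\classrep_{q_\eee},\classrep_{q_\eee}\ddd\gcd(\eee,\ddd)^{-1},\gamma)$ at the cost of a factor $\norm(\aaa\eee)^{1/m}$, and Lemma~\ref{lem:lattice}$(2)$ to restore the modulus to $\classrep_{q_\eee}\ddd$ at a further cost of $\norm\eee_2^{1/m}$; the product $\norm(\aaa\eee)^{1/m}/\norm\eee_2^{1/m}=\norm(\aaa\eee_1)^{1/m}$ removes all dependence on $\eee_2$ in the resulting denominator. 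Writing $\lambda_i:=\min_q\lambda^{(i)}(\classrep_q,\classrep_q\ddd,\gamma)$ and using that $\lambda^{(i)}\asymp\lambda^{(1)}$ for $i\leq m$ (since multiplying a single short vector by units in $\OO_K^\times$ produces $m$ comparably short, $\R$-linearly independent vectors), one arrives at the uniform bound
\[
E_\eee\ll\sum_{j=0}^{2m-1}\frac{X_T^{j/m}}{\norm(\aaa\eee_1)^{j/m}\,\lambda_1^{\min(j,m)}\,\lambda_{m+1}^{\max(j-m,0)}}.
\]
The $\eee_2$-sum over squarefree divisors of $\prod_{\ppp\mid\ddd}\ppp$ contributes a factor of at most $\tau_K(\ddd)$, while the $\eee_1$-sum, truncated at $\norm\eee_1\ll X_T$ (since the zero-centeredness of $\mathcal{R}_T$ forbids any contribution beyond this threshold, as $(s,t)\in(\classrep\eee)^2\cap\mathcal{R}_T$ forces $\norm\eee\ll X_T$), yields $X_T^{1-j/m}$ for $j<m$, $\log X_T$ for $j=m$, and $O(1)$ for $j>m$. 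Re-indexing $j=m+j'$ on the high range exactly reproduces the stated error bound.

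The main obstacle lies with the low-$j$ contributions ($j<m$), whose naive sum produces terms of size $\tau_K(\ddd)X_T/\lambda_1^j$ that are not always dominated by the $j'=0$ piece $\tau_K(\ddd)X_T\log X_T/\lambda_1^m$ of the target when $\lambda_1$ is appreciably larger than~$1$. The resolution is to split the $\eee_1$-sum at the threshold $\norm\eee_1\asymp X_T/(\norm\aaa\lambda_1^m)$: below it, the estimate above is used and the high-$j$ contributions dominate after summation; above it, the lower bound $\lambda^{(1)}(\aaa\eee_1,\ddd,\gamma)\gg X_T^{1/m}$ forces each coset to contain at most one point of $\mathcal{R}_T$, so the contribution reduces to counting the $\eee_1$ in that range, which is $O(X_T/(\norm\aaa\lambda_1^m))$ and hence absorbs directly into the $j'=0$ part of the target.
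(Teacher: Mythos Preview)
Your overall strategy matches the paper's: M\"obius inversion over the primitivity condition, then an application of Lemma~\ref{lem:lattice point counting} to each piece, followed by a truncation of the M\"obius variable driven by the first minimum. The main-term computation is correct. However, two steps in your error analysis do not go through as written.

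First, the claim that $\lambda^{(i)}\asymp\lambda^{(1)}$ for $i\leq m$ ``since multiplying a short vector by units produces $m$ comparably short, $\R$-linearly independent vectors'' fails whenever $\OO_K^\times$ does not span $K$ over $\Q$; for instance $K=\Q(\sqrt{-5})$ has $\OO_K^\times=\{\pm 1\}$, yielding only one direction. Fortunately this claim is unnecessary: the trivial inequality $\prod_{i=1}^j\lambda^{(i)}\geq\lambda_1^j$ already suffices, and this is all the paper uses.

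Second, and more seriously, your ``above threshold'' argument is a non-sequitur. From ``at most one point per coset'' you cannot conclude that the contribution is $O(X_T/(\norm\aaa\lambda_1^m))$: the range $X_T/(\norm\aaa\lambda_1^m)\ll\norm\eee_1\ll X_T$ contains $\asymp X_T$ ideals, so summing $O(1)$ over it gives only $O(X_T)$. The paper's resolution is cleaner and avoids the split altogether. The set you are counting is not merely a \emph{coset} of $\Lambda(\aaa\eee,\ddd,\gamma)$; it is a \emph{subset} of the zero-centred lattice $\Lambda(\classrep\eee,\ddd,\gamma)$ (simply drop the congruence mod $\aaa$). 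Since $(0,0)\notin\Lambda^*$, one may work in $\mathcal R_T\smallsetminus\{0\}$, and then nonemptiness forces $\lambda^{(1)}(\classrep\eee,\ddd,\gamma)\ll X_T^{1/m}$, which via Lemma~\ref{lem:lattice}(1),(2) gives the truncation $\norm\eee_1\ll X_T/\lambda_1^m$ outright (the contribution beyond is literally zero). Summing the $j$-th error term over this truncated range then yields $X_T(\log X_T)\tau_K(\ddd)/\lambda_1^m$ for $j\leq m$ and $X_T^{j/m}(\log X_T)\tau_K(\ddd)/(\lambda_1^m\lambda_{m+1}^{j-m})$ for $j>m$, exactly as required.
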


\begin{proof}
After M\"obius inversion 
the quantity under consideration becomes equal to
  \begin{equation*}
    \sum_{\eee\mid\ddd}\sum_{\substack{
\bbb\in\ideals\\
\bbb+\ddd = \eee \\
\bbb+\aaa\classrep^{-1} = \OO_K
}}
\mu(\bbb)\card\{(s,t)\in (\classrep\bbb)^2\cap(\mathcal{R}_T\smallsetminus\{0\})\where (s,t)\equiv(\sigma,\tau)\bmod \aaa, s\equiv\gamma t\bmod \ddd\}.
  \end{equation*}
  Writing $\bbb = \bbb'\eee$, we see that $\bbb' + \ddd = \OO_K$ whenever $\mu(\bbb)\neq 0$,
thus
the sum becomes
\begin{equation*}
\sum_{\eee\mid\ddd}\mu(\eee)\sum_{\substack{\bbb'\in\ideals\\\bbb'+\aaa\classrep^{-1}\ddd=\OO_K}}\mu(\bbb')\card\{(s,t)\in (\classrep\bbb'\eee)^2\cap(\mathcal{R}_T\smallsetminus\{0\})\where (s,t)\equiv(\sigma,\tau)\bmod \aaa, s\equiv\gamma t\bmod \ddd\}.
  \end{equation*}
Since the set counted in the inner summand is contained in $\Lambda(\classrep\bbb'\eee, \ddd, \gamma)\cap(\mathcal{R}_T\smallsetminus\{0\})$, the summand is zero unless $\lambda^{(1)}(\classrep\bbb'\eee,\ddd,\gamma) \ll X_T^{1/m}$. Using Lemma \ref{lem:lattice}, this condition implies that
\begin{equation}\label{eq:class representative bound}
  \norm\bbb'\ll\frac{X_T}{\min_{1\leq q\leq h}\{\lambda^{(1)}(\classrep_q,\classrep_q\ddd,\gamma)\}^{m}\norm\classrep}.
\end{equation}
Let $\tilde{\sigma},\tilde{\tau}$ in $\classrep\bbb'\eee$ such that $(\tilde{\sigma},\tilde{\tau})\equiv (\sigma,\tau)\bmod\aaa$. 
We have
$(\sigma,\tau)\equiv (0,0)\bmod (\classrep\bbb'\eee+\aaa)=\classrep$,
hence,
such $(\tilde{\sigma},\tilde{\tau})$ exist.
The Chinese remainder theorem 
allows us to transform our sum to
\begin{equation*}
\sum_{\eee\mid\ddd}\mu(\eee)\sum_{\substack{
\eqref{eq:class representative bound}
\\
\bbb'\in\ideals
\\
\bbb'+\aaa\classrep^{-1}\ddd= \OO_K
}}\mu(\bbb')\card\{(s,t)\in ((\tilde{\sigma},\tilde{\tau})+(\aaa\bbb'\eee)^2)\cap(\mathcal{R}_T\smallsetminus\{0\})\where s\equiv\gamma t\bmod \ddd\}.
\end{equation*}
Next, we replace $(s,t)$ by $(s_1,t_1) := (s-\tilde{\sigma}, t-\tilde{\tau})$, so that the inner 
cardinality
becomes
\begin{equation*}
  \card\{(s_1,t_1)\in (\aaa\bbb'\eee)^2\cap ((\mathcal{R}_T\smallsetminus\{0\})-(\tilde{\sigma}, \tilde{\tau}))\where s_1+\tilde{\sigma}-\gamma\tilde{\tau}\equiv \gamma t_1\bmod\ddd \}.
\end{equation*}
Since $\tilde{\sigma}-\gamma\tilde{\tau} \equiv 0 \bmod \eee = \aaa\bbb'\eee+\ddd$, we can find $\delta \in \aaa\bbb'\eee$ with $\delta \equiv \tilde{\sigma}-\gamma\tilde{\tau}\bmod\ddd$. The replacement of $s_1$ by $s_2 := s_1 + \delta$ transforms the count to
\begin{align}
  \nonumber&\card\{(s_2,t_1)\in (\aaa\bbb'\eee)^2\cap((\mathcal{R}_T\smallsetminus\{0\})-(\tilde{\sigma},\tilde{\tau}) + (\delta,0))\where s_2\equiv\gamma t_1\bmod \ddd\}\\ =\ &\card( \Lambda(\aaa\bbb'\eee, \ddd, \gamma)\cap((\mathcal{R}_T\smallsetminus\{0\})-(\tilde{\sigma},\tilde{\tau})+(\delta,0))).\label{eq:count 0}
\end{align}
Clearly, we can extend our family $\mathcal{R}$ to a definable family $\widetilde{\c{R}}\subseteq \R^{(k+2\dg)+2\dg}$, whose fibre $\widetilde{\c{R}}_{(T,\sigma,\tau)}$, for $(T,\sigma,\tau)\in\R^{k+2\dg}$, is the translate $\mathcal{R}_T + (\sigma,\tau)$. Lemma \ref{lem:lattice point counting} thus allows us to approximate the quantity in \eqref{eq:count 0} by
\begin{equation}\label{eq:count}
  \frac{c_K\vol\mathcal{R}_T}{\norm(\aaa^2\bbb'^2\eee\ddd)}+O\left(\sum_{j=0}^{2m-1}\frac{X_T^{j/m}}{\norm(\aaa\bbb'\eee)^{j/m}\min_{1\leq q\leq h}\{\prod_{i=1}^j\lambda^{(i)}(\classrep_q,\classrep_q\ddd\eee^{-1},\gamma)\}}\right).
\end{equation}
Summing the main term over $\eee$ and $\bbb'$ gives
\begin{equation*} \frac{c_K\vol\mathcal{R}_T}{\norm(\aaa^2\ddd)}\sum_{\eee\mid\ddd}\frac{\mu_K(\eee)}{\norm\eee}\sum_{\substack{\bbb'\in\ideals\\\bbb'+\aaa\ddd\classrep^{-1} = \OO_K\\\eqref{eq:class representative bound}}}\frac{\mu_K(\bbb')}{\norm\bbb'^2}.
\end{equation*}
The desired main term is obtained by removing
condition~\eqref{eq:class representative bound},
present in the inner sum. This introduces an error of size 
\begin{align*}
  \ll \frac{\vol\mathcal{R}_T}{X_T\norm\ddd}\sum_{\eee\mid\ddd}\norm\classrep\ \min_{1\leq q\leq h}\{\lambda^{(1)}(\classrep_q,\classrep_q\ddd,\gamma)\}^m &\ll \frac{\tau_K(\ddd)\vol\mathcal{R}_T\min_{1\leq i\leq h}\{\lambda^{(1)}(\classrep_q,\classrep_q\ddd,\gamma)\}^m\norm\classrep}{X_T\norm\ddd}\\ &\ll \frac{X_T\tau_K(\ddd)\norm\classrep}{\min_{1\leq i\leq h}\{\lambda^{(1)}(\classrep_q,\classrep_q\ddd,\gamma)\}^m}.
\end{align*}
Summing the summand for $j$ in the error term of \eqref{eq:count} over $\eee$ and $\bbb'$ gives a total error
\begin{equation}\label{eq:lattice error term}
  \ll X_T^{j/m}\sum_{\eee\mid\ddd}\frac{1}{\norm\eee^{j/m}\min_{1\leq q\leq h}\{\prod_{i=1}^j\lambda^{(i)}(\classrep_q,\classrep_q\ddd\eee^{-1},\gamma)\}}\sum_{\substack{\bbb'\in\ideals\\\eqref{eq:class representative bound}}}\frac{1}{\norm\bbb'^{j/m}}
\end{equation}
and 
\begin{align*}
\sum_{\substack{\bbb'\in\ideals\\\eqref{eq:class representative bound}}}\frac{1}{\norm\bbb'^{j/m}}& \ll \left(\frac{X_T}{\min_{1\leq q\leq h}\{\lambda^{(1)}(\classrep_q,\classrep_q\ddd,\gamma)\}^m}\right)^{\max\{0,1-j/m\}}(\log X_T).
\end{align*}
Observe, moreover, that $\norm\eee^{1/m}\lambda^{(i)}(\classrep_q,\classrep_q\ddd\eee^{-1},\gamma)\gg \lambda^{(i)}(\classrep_q,\classrep_q\ddd,\gamma)$, by Lemma \ref{lem:lattice}. Thus, for $j\geq m$ the expression in \eqref{eq:lattice error term} is
\begin{equation*}
  \ll \frac{X_T^{j/m}(\log X_T)\tau(\ddd)}{\min_{1\leq q\leq h}\{\lambda^{(1)}(\classrep_q,\classrep_q\ddd,\gamma)^{m}\lambda^{(m+1)}(\classrep_q,\classrep_q\ddd,\gamma)^{j-m}\}},
\end{equation*}
which, upon replacing $j$ by $j-m$, 
is
covered by the lemma's error term. For $j<m$, the expression in \eqref{eq:lattice error term} is
at most
$
  \ll  
X_T(\log X_T)\tau(\ddd)
(\min_{1\leq q\leq h}\{\lambda^{(1)}(\classrep_q,\classrep_q\ddd,\gamma)^{m}\})^{-1}
$.
\end{proof}

\subsection{Averages of certain arithmetic functions related to Artin $L$-functions}
\label{s:artin}
We shall 
provide asymptotic estimates
for averages of functions 
that will later appear in
the treatment of the main term in
Theorem~\ref{thm:main 2}.

\begin{lemma}\label{lem:contour integration}
Let $a:\N\to \C$ be an arithmetic function with associated Dirichlet series $A(s) = \sum_{n\in\N}a(n)n^{-s}$. Let $\delta,C > 0$, $\lambda > 2$
and assume that
\begin{align}
&a(n) \leq C n^\delta,\label{eq:contour integration an bound}\\ 
&A(s) \text{ has an analytic continuation to }\Re(s)>1/2,\\
&A(s) \leq C (1+|\Im(s)|)^{1/2}\text{, for } \Re(s)\geq 1-1/\lambda.
\end{align}
Then
\begin{equation*}
  \sum_{n\leq X}a(n) \ll C X^{1-1/(2\lambda)+2\delta}, 
\end{equation*}
for $X\geq 1$, where the implicit constant may depend at most on $\lambda$ and $\delta$.
\end{lemma}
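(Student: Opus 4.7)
The plan is to combine a truncated Perron formula with a contour shift to the left, exploiting the analytic continuation of $A(s)$ and its vertical growth hypothesis. Since $|a(n)|\leq Cn^\delta$ forces $A(s)$ to converge absolutely in the half-plane $\Re(s)>1+\delta$, I would fix $c:=1+\delta+1/\log X$ and a truncation height $T\geq 1$ to be optimised later. The standard truncated Perron formula, combined with the trivial estimate $\sum_{n\geq 1}|a(n)|n^{-c}\ll C\log X$, then yields
\[
\sum_{n\leq X}a(n)=\frac{1}{2\pi i}\int_{c-iT}^{c+iT}A(s)\frac{X^s}{s}\,ds+O\!\left(C\,\frac{X^{1+\delta}\log X}{T}\right),
\]
where the error absorbs both the tail of the Dirichlet series and the correction from integers $n$ close to $X$.

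Since $\lambda>2$ implies $1-1/\lambda>1/2$, the analytic-continuation hypothesis allows me to shift the vertical segment to $\Re(s)=1-1/\lambda$; no residues are encountered, because $A(s)X^s/s$ is holomorphic throughout the closed rectangle (the only potential pole of $1/s$, at $s=0$, lies strictly to the left of the shifted contour). Applying $|A(s)|\leq C(1+|\Im s|)^{1/2}$ gives
\[
\left|\int_{1-1/\lambda-iT}^{1-1/\lambda+iT}A(s)\frac{X^s}{s}\,ds\right|\ll CX^{1-1/\lambda}\int_{-T}^{T}\frac{(1+|t|)^{1/2}}{1+|t|}\,dt\ll CX^{1-1/\lambda}T^{1/2},
\]
while the horizontal segments at imaginary part $\pm T$ contribute at most $\ll CX^cT^{-1/2}$, using $|A(\sigma\pm iT)|\ll CT^{1/2}$ and $|\sigma\pm iT|\gg T$ along them.

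The final step is to balance the three error sources. Choosing $T=X^{1/\lambda+\delta}$ equates the dominant contributions $CX^{1-1/\lambda}T^{1/2}$ and $CX^{1+\delta}T^{-1/2}$, yielding a total error of size $CX^{1-1/(2\lambda)+\delta/2}$, which is comfortably absorbed by the stated bound $CX^{1-1/(2\lambda)+2\delta}$. I do not foresee any serious analytic obstacle: the substantive input is entirely encoded in the convex-type growth hypothesis on $A(s)$, and the remainder is standard analytic-number-theoretic bookkeeping. The one point requiring care is that the starting contour must sit to the right of $1+\delta$, so that the Perron truncation error is controlled; the generous margin of $2\delta$ appearing in the claimed exponent reflects precisely this loss, and leaves ample room in the final optimisation.
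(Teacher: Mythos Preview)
Your proposal is correct and follows essentially the same route as the paper: truncated Perron formula followed by a contour shift to $\Re(s)=1-1/\lambda$, with the vertical and horizontal integrals bounded via the growth hypothesis on $A(s)$. The only differences are cosmetic choices of parameters (the paper takes $\sigma_0=1+2\delta$ and $T=X^{1/\lambda}$ rather than your $c=1+\delta+1/\log X$ and $T=X^{1/\lambda+\delta}$), which lead to the same final exponent up to the generous $2\delta$ slack already built into the statement.
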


\begin{proof}
  The Dirichlet series defining $A(s)$ converges absolutely for $\Re(s)>1+\delta$, thanks to \eqref{eq:contour integration an bound}. Let $\sigma_0 := 1+2\delta$ and $T:=X^{1/\lambda}$. 
We shall make use of Perron's formula (see for example~\cite[Corollary 5.3]{vaughan060})
to obtain
  \begin{align*}
\sum_{n\leq X}a(n) 
&
- \frac{1}{2\pi i}\int_{\sigma_0-iT}^{\sigma_0+iT}A(s)\frac{X^s}{s}\mathrm{d} s\\
&\ll \sum_{x/2<n<2x}|a(n)|\min\left\{1,\frac{X}{T|X-n|}\right\} + 
\frac{4^{\sigma_0} + X^{\sigma_0}}{T}
\sum_{n\in\N}\frac{|a(n)|}{n^{\sigma_0}}.
  \end{align*}
Replacing the minimum by its second term unless $|X-n|<1$, the first error term becomes
\begin{equation*}
  \ll CX^{\delta}\left(1+\frac{X}{T}\sum_{1\leq m \leq 2X}\frac{1}{m}\right)\ll_\delta CX^{1-1/\lambda+2\delta},
\end{equation*}
while 
the second error term is
$\ll C X^{1-1/\lambda+2\delta}\sum_{n\in\N}n^{-1-\delta}\ll_{\delta}CX^{1-1/\lambda+2\delta}$. 
Shifting the line of integration to the left, we see 
that the main term
equals
\[
  \left(-\int_{1-1/\lambda-iT}^{\sigma_0-iT}+\int_{1-1/\lambda -iT}^{1-1/\lambda+iT} + \int_{1-1/\lambda+iT}^{\sigma_0+iT}\right)A(s)\frac{X^s}{s}\mathrm{d}s.
\]
The first and third integral 
are bounded by
\begin{equation*}
  \ll CT^{-1/2}\int_{u=1-1/\lambda}^{\sigma_0}X^u\mathrm{d}u \ll CT^{-1/2}X^{\sigma_0} = CX^{1-1/(2\lambda)+2\delta}
\end{equation*}
and
the second integral attains a value
\begin{align*}
  &\ll CX^{1-1/\lambda}\int_{t=-T}^T\frac{(1+|t|)^{1/2}}{|1-1/\lambda+it|}\mathrm{d}T \ll CX^{1-1/\lambda}\left(1+\int_{t=1}^Tt^{-1/2}\mathrm{d}t\right)\\ &\ll CX^{1-1/\lambda} T^{1/2}\ll CX^{1-1/(2\lambda)}.
\end{align*}
\end{proof}

\begin{lemma}\label{lem:twisted sum}
  Let $\rho:\ideals\to\C$ be a multiplicative function whose assosiated Dirichlet series is
$D_\rho(s) = \sum_{\aaa\in\ideals}\rho(\aaa)\norm\aaa^{-s}$. Let $\www\in\ideals$, $\lambda>2$, and 
$f\in\mathcal{Z}_K$. Assume that the following conditions hold:
  \begin{align}
    &\rho(\aaa)=0 \text{ unless }\aaa+\www=\OO_K,\\
    &\rho(\ppp^k)\ll_\rho 1 \text{ for all prime ideals } \ppp\nmid\www \text{ and all }k\geq 0,\label{eq:twisted sum cond 2}\\
    &D_\rho(s) \text{ has an analytic continuation to }\Re(s)>1/2,\label{eq:twisted sum cond 3}\\
    &D_\rho(s) \ll_\rho (1+|\Im(s)|)^{1/2} \text{ for }\Re(s)\geq 1-1/\lambda,\label{eq:twisted sum cond 4}\\
&\left|\sum_{k=1}^\infty\frac{\rho(\ppp^k)}{\norm\ppp^{ks}}\right|<\frac{1}{2} \text{ for all prime ideals }\ppp\nmid\www \text{ and }\Re(s)>1/2,\label{eq:twisted sum cond 5}\\
&\left|\one_f(\ppp)\sum_{k=1}^\infty\frac{\rho(\ppp^k)}{\norm\ppp^{ks}}\right|<\frac{1}{2} \text{ for all prime ideals }\ppp\nmid\www \text{ and }\Re(s)>1/2.\label{eq:twisted sum cond 6}
  \end{align}
Then there is $\beta>0$ and $\upgamma\in
\mathcal{Z}_K$, such that, for any $\ccc\in\ideals$ with $\ccc+\www=\OO_K$, we have
\begin{equation*}
  \sum_{\substack{\norm\aaa\leq X\\\aaa+\ccc\www=\OO_K}}\frac{\one_f(\aaa)\rho(\aaa)}{\norm\aaa} = D_\rho(1)\beta\one_\upgamma(\ccc) + O(\norm\ccc^\epsilon X^{-1/(2\lambda)+\epsilon}),
\end{equation*}
for all $\epsilon>0$. The implicit constant is allowed to depend on $\epsilon,\rho,\www,f,\lambda$, but not on $\ccc,X$.
\end{lemma}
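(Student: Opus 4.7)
My plan is to set up the Dirichlet series for the sum, factor out $D_\rho(s+1)$, and then apply Perron's formula with a contour shift that yields the main term as a residue at $s=0$ and the error term from the vertical integral on a line inside $\Re(s)\geq -1/\lambda$.

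First, the sum under consideration equals $\sum_{n\leq X}\tilde a(n)$, where
\[
\tilde a(n):=\sum_{\substack{\aaa\in\ideals,\ \norm\aaa=n\\ \aaa+\ccc\www=\OO_K}}\frac{\one_f(\aaa)\rho(\aaa)}{\norm\aaa}.
\]
Its associated Dirichlet series $A(s):=\sum_n \tilde a(n)n^{-s}$ admits, via multiplicativity of $\one_f,\rho$ and the coprimality condition, the Euler product
\[
A(s)=\prod_{\ppp\nmid\ccc\www}\bigl(1+(1+f(\ppp))L_\ppp(s)\bigr),\qquad L_\ppp(s):=\sum_{k\geq 1}\frac{\rho(\ppp^k)}{\norm\ppp^{k(s+1)}}.
\]
Introducing and then removing the Euler factors of $D_\rho(s+1)$ at primes $\ppp\nmid\www$ gives the factorisation $A(s)=D_\rho(s+1)\,H(s,\ccc)$, where
\[
H(s,\ccc)=\prod_{\ppp\nmid\www}\left(1+\frac{f(\ppp)L_\ppp(s)}{1+L_\ppp(s)}\right)\prod_{\ppp\mid\ccc}\frac{1}{1+(1+f(\ppp))L_\ppp(s)}.
\]
Hypotheses \eqref{eq:twisted sum cond 2}, \eqref{eq:twisted sum cond 5} and \eqref{eq:twisted sum cond 6}, together with $|f(\ppp)|\ll\norm\ppp^{-1}$, ensure $|f(\ppp)L_\ppp(s)|\ll\norm\ppp^{-(2+\Re s)}$ and bound both denominators uniformly away from $0$, so the infinite product defining $H$ is absolutely convergent and analytic for $\Re(s)\geq -1/\lambda$; the finite product over $\ppp\mid\ccc$ is $\ll\norm\ccc^\epsilon$ uniformly on this strip by the standard estimate $\omega(\ccc)\ll\log\norm\ccc/\log\log\norm\ccc$.

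Next, evaluating at $s=0$, set $\beta:=\prod_{\ppp\nmid\www}\bigl(1+f(\ppp)L_\ppp(0)/(1+L_\ppp(0))\bigr)>0$ (each factor lies within distance $1/2$ of $1$) and define $\upgamma$ on prime ideals by
\[
\upgamma(\ppp):=-\frac{(1+f(\ppp))L_\ppp(0)}{1+(1+f(\ppp))L_\ppp(0)},
\]
extended multiplicatively with $\upgamma(\aaa)=0$ whenever $\mu_K(\aaa)=0$. Condition \eqref{eq:twisted sum cond 6} forces $\upgamma(\ppp)>-1$, while $|L_\ppp(0)|\ll\norm\ppp^{-1}$ (from \eqref{eq:twisted sum cond 2}) gives $\upgamma(\ppp)\ll\norm\ppp^{-1}$, hence $\upgamma\in\c Z_K$. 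A direct local calculation then shows $H(0,\ccc)=\beta\prod_{\ppp\mid\ccc}(1+\upgamma(\ppp))=\beta\one_\upgamma(\ccc)$, so $A(0)=D_\rho(1)\beta\one_\upgamma(\ccc)$, precisely the proposed main term.

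Finally, I would apply Perron's formula to $\sum_{n\leq X}\tilde a(n)$ on the vertical line $\Re(s)=\sigma_0:=2\epsilon$ with truncation $T:=X^{1/\lambda}$ and then shift the contour to $\Re(s)=-1/\lambda+\epsilon$, picking up a simple pole of $A(s)X^s/s$ at $s=0$ whose residue equals $A(0)$. The bound \eqref{eq:twisted sum cond 4} together with $|H(s,\ccc)|\ll\norm\ccc^\epsilon$ yields $|A(s)|\ll\norm\ccc^\epsilon(1+|\Im s|)^{1/2}$ on the strip, so the integral on the shifted vertical line contributes $\ll\norm\ccc^\epsilon X^{-1/\lambda+\epsilon}\int_{-T}^{T}(1+|t|)^{-1/2}\,dt\ll\norm\ccc^\epsilon X^{-1/(2\lambda)+\epsilon}$; the horizontal connecting segments give $\ll\norm\ccc^\epsilon T^{-1/2}X^{\sigma_0}$, and the standard Perron truncation errors, handled exactly as in the proof of Lemma~\ref{lem:contour integration}, yield an analogous bound. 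Each piece is $\ll\norm\ccc^\epsilon X^{-1/(2\lambda)+\epsilon}$, matching the claim. The only genuine obstacle is the Euler-product bookkeeping that isolates the $\ccc$-dependence of $A(0)$ in the clean form $\one_\upgamma(\ccc)$ with $\upgamma\in\c Z_K$; once this identification is made, the analytic contour-shift is standard.
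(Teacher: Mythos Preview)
Your proposal is correct and takes essentially the same approach as the paper: the key factorisation $A(s)=D_\rho(s+1)H(s,\ccc)$ is exactly the paper's $D_\ccc(s)=D_\rho(s)\Phi(s)\Psi_\ccc(s)$, and your identification of $\beta$ and $\upgamma$ matches verbatim. The only cosmetic difference is that the paper applies its Perron lemma to the \emph{unweighted} partial sums $\sum_{\norm\aaa\leq X}\one_f(\aaa)\rho(\aaa)$ (obtaining a bound with no main term) and then recovers both the constant $D_\ccc(1)$ and the error via partial summation, whereas you fold the weight $1/\norm\aaa$ into the Dirichlet series and pick up the main term as a residue at $s=0$.
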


\begin{proof}
  For $\ppp\nmid\www$ let 
$\Phi_\ppp(s) := \sum_{k=1}^\infty
\rho(\ppp^k) \norm\ppp^{-ks}$,
  which is bounded in absolute value by $1/2$ whenever $\Re(s)>1/2$, due to \eqref{eq:twisted sum cond 5}. Moreover, condition \eqref{eq:twisted sum cond 2} implies that
  \begin{equation}
    \label{eq:twisted sum phi p bound}
    \Phi_\ppp(s) \ll_\rho \norm\ppp^{-s}\quad \text{ for }\Re(s)>1/2.
  \end{equation}
  Define formally the Dirichlet series
  \begin{align*}
D_\ccc(s) &:= \sum_{\substack{\aaa\in\ideals\\\aaa+\ccc\www=\OO_K}}\frac{\one_f(\aaa)\rho(\aaa)}{\norm\aaa^s}=\prod_{\ppp\nmid\ccc\www}\left(1+\one_f(\ppp)\Phi_\ppp(s)\right),\\
\Psi_\ccc(s) &:= \prod_{\ppp\mid\ccc}\left(1+\one_f(\ppp)\Phi_\ppp(s)\right)^{-1}
  \ \ \ \ \  \ \ \ \ \ \ \text{and}  \\
\Phi(s) \ &:= 
\prod_{\ppp\nmid\www}\frac{1+\one_f(\ppp)\Phi_\ppp(s)}{1+\Phi_\ppp(s)} = \prod_{\ppp\nmid\www}\left(1 + \frac{f(\ppp)\Phi_\ppp(s)}{1+\Phi_\ppp(s)}\right),
  \end{align*}
  to obtain a factorization
  \begin{equation}\label{eq:Dcs factorization}
    D_\ccc(s) = D_\rho(s)\Phi(s)\Psi_\ccc(s).
  \end{equation}
By \eqref{eq:twisted sum phi p bound}, the Euler products for $D_\ccc(s)$ and $D_\rho(s)$ converge absolutely and define holomorphic functions for $\Re(s)>1$,
while~\eqref{eq:twisted sum phi p bound} and~\eqref{eq:twisted sum cond 5}
guarantee that $\Phi(s)$ converges absolutely and defines a holomorphic function on $\Re(s)>1/2$. 
Moreover, \eqref{eq:twisted sum cond 6} ensures that all factors of the finite product $\Psi_\ccc(s)$ are defined and holomorphic for $\Re(s)>1/2$. 
Consequently,
the factorization \eqref{eq:Dcs factorization} holds for $\Re(s)>1$ and, using  \eqref{eq:twisted sum cond 3}, provides an analytic continuation of $D_\ccc(s)$ to $\Re(s)>1/2$. For $\Re(s)\geq 1-1/\lambda$, we obtain by \eqref{eq:twisted sum cond 4} and \eqref{eq:twisted sum cond 5} that
\begin{equation*}
  |D_\ccc(s)|\ll_\rho (1+|\Im(s)|)^{1/2}\left(\prod_{\ppp\mid\ccc}2\right)|\Phi(s)| \ll_{\epsilon,f,\rho,\lambda} \norm\ccc^\epsilon(1+|\Im(s)|)^{1/2}. 
\end{equation*}
Since moreover
$\sum_{\norm\aaa=k}\one_f(\aaa)\rho(\aaa) \ll_{\epsilon,f,\rho} k^{\epsilon}$,
we may apply Lemma \ref{lem:contour integration} to obtain
for any $\epsilon>0$,
\begin{equation*}
  \sum_{\substack{\norm\aaa\leq X\\\aaa+\ccc\www=\OO_K}}\one_f(\aaa)\rho(\aaa) \ll_{\epsilon,f,\rho,\lambda}\norm\ccc^\epsilon X^{1-1/(2\lambda)+\epsilon}
.\end{equation*}
Partial summation reveals that the series defining $D_\ccc(s)$ converges for $s=1$
and 
\begin{equation*}
  \sum_{\substack{\norm\aaa\leq X\\\aaa+\ccc\www=\OO_K}}\frac{\one_f(\aaa)\rho(\aaa)}{\norm\aaa} = D_\rho(1)\Phi(1)\Psi_\ccc(1) + O(\norm\ccc^\epsilon X^{-1/(2\lambda)+\epsilon}).
\end{equation*}
Conditions \eqref{eq:twisted sum cond 5} and \eqref{eq:twisted sum cond 6} show that $\beta := \Phi(1) > 0$. We finish our proof with the observation $\Psi_\ccc(1) = \one_\upgamma(\ccc)$, where
\begin{equation*}
  \upgamma(\ppp) := (1+\one_f(\ppp)\Phi_\ppp(1))^{-1}-1 = \sum_{k=1}^\infty(\one_f(\ppp)\Phi_\ppp(1))^k.
\end{equation*}
In particular, $|\upgamma(\ppp)| < 1$ and $\upgamma(\ppp)\ll \norm\ppp^{-1}$, so 
$\upgamma\in\mathcal{Z}_K$.
\end{proof}

In our proof of Theorem \ref{thm:main 2}, we shall
apply the above result for Dirichlet series $D_\rho(s)$ of the following form. Let $(F,G)$ be a pair of binary forms in $\OO_K[s,t]$, such that $F$ is irreducible in $K[s,t]$, not proportional to $t$,  and does not divide $G$ in $K[s,t]$. We assume furthermore that $G$ is of even degree, and that $G(\theta,1)\notin K(\theta)^{\times 2}$, where $\theta\in\overline{K}$ is a root of 
$F(s,1)$.

Fix $\www\in \ideals$ with $2\mid\www$. We define, for $\aaa\in\ideals$, 
the multiplicative
function $\rho_{(F,G)}(\aaa)$ by
\begin{equation*}
\rho_{(F,G)}(\aaa) := \sum_{\substack{\lambda \bmod{\aaa}\\F(\lambda,1)\equiv 0 \bmod{\aaa}}} \qr{G(\lambda,1)}{\aaa},\quad\text{ if }\aaa+\www=\OO_K,
\end{equation*}
and $\rho_{(F,G)}(\aaa)=0$ otherwise. 
We assume that $\www$ is divisible by enough small prime ideals to ensure that $2\cdot |\rho_{(F,G)}(\ppp)|<\norm\ppp^{1/2}$ for all prime ideals $\ppp$.

\begin{lemma}
\label{lem:artinakos}
 The Dirichlet series
of $\rho_{(F,G)}$,
given by
\begin{equation*}
D_{(F,G)}(s):= \sum_{\aaa \in \ideals}\frac{\rho_{(F,G)}(\aaa)}{\norm\aaa^s}
,\end{equation*}
defines a holomorphic function in $\Re(s)>\frac{1}{2}$ that does not vanish at $s=1$. 
We
furthermore
have
$
|D_{(F,G)}(s)| \ll \l(1+|\Im(s)|\r)^{1/2}
$
in the region $\Re(s) > 1-1/\lambda$, where $\lambda =1+2\dg\deg F$.
\end{lemma}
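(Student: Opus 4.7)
The plan is to identify $D_{(F,G)}(s)$, up to a correction factor that is holomorphic and non-vanishing for $\Re(s)>1/2$, with a Hecke $L$-function over the field $M := K(\theta)$, which has degree $m\deg F$ over $\Q$. Since $G(\theta,1)\notin M^{\times 2}$ by hypothesis, the extension $M(\sqrt{G(\theta,1)})/M$ is a non-trivial quadratic extension, and we let $\chi$ denote the corresponding quadratic Hecke character of $M$. The Hecke $L$-function $L(s,\chi)$ is entire and satisfies $L(1,\chi)\neq 0$, and we shall derive the analytic properties of $D_{(F,G)}(s)$ from those of $L(s,\chi)$.

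First I would enlarge $\www$, if necessary, so that it is divisible by the discriminant of the polynomial $F(x,1)$, the resultant $\res(F,G)$, the relative discriminant of $M/K$ and the conductor of $\chi$. For any $\ppp\nmid\www$, Hensel's lemma then gives a bijection between the set $\{\lambda\bmod\ppp^k : F(\lambda,1)\equiv 0\bmod\ppp^k\}$ and the primes $\PPP\mid\ppp$ of $\OO_M$ of residue degree one. Under the identification $\OO_M/\PPP\cong\OO_K/\ppp$, the element $G(\theta,1)\bmod\PPP$ corresponds to $G(\lambda,1)\bmod\ppp$, so that
\[
\qr{G(\lambda,1)}{\ppp^k} = \chi(\PPP)^k\quad\text{and}\quad
\rho_{(F,G)}(\ppp^k) = \sum_{\PPP}\chi(\PPP)^k,
\]
with the sum restricted to primes $\PPP\mid\ppp$ of residue degree one. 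Summing the geometric series, the $\ppp$-th local factor of $D_{(F,G)}(s)$ is
\[
E_\ppp(s) = 1 + \sum_{\PPP}\frac{\chi(\PPP)\norm\ppp^{-s}}{1-\chi(\PPP)\norm\ppp^{-s}}.
\]

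An elementary algebraic manipulation shows that the ratio of $E_\ppp(s)$ to the degree-one partial Euler factor $\prod_{\PPP}(1-\chi(\PPP)\norm\ppp^{-s})^{-1}$ has the form $1+O(\norm\ppp^{-2\Re(s)})$. Primes $\PPP\mid\ppp$ of residue degree at least $2$ satisfy $\norm\PPP\geq\norm\ppp^2$, so the remaining Euler factors of $L(s,\chi)$ comprise an absolutely convergent product for $\Re(s)>1/2$. Absorbing the finitely many local factors at $\ppp\mid\www$, we obtain a factorisation $D_{(F,G)}(s) = L(s,\chi)\,H(s)$ in which $H(s)$ is holomorphic and non-vanishing in $\Re(s)>1/2$; the holomorphy and non-vanishing statements for $D_{(F,G)}(s)$ then follow immediately from the corresponding classical statements for the non-trivial quadratic Hecke $L$-function $L(s,\chi)$.

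For the growth estimate I would invoke the standard Phragm\'en--Lindel\"of convexity bound for $L(s,\chi)$. Combining the trivial bound $L(s,\chi)\ll 1$ on $\Re(s)=1+\epsilon$ with the bound $L(s,\chi)\ll (1+|\Im(s)|)^{m\deg F/2+\epsilon}$ on $\Re(s)=-\epsilon$ (the latter obtained from the functional equation and Stirling, using $[M:\Q]=m\deg F$), interpolation yields
\[
L(s,\chi)\ll (1+|\Im(s)|)^{m\deg F/(2\lambda)+\epsilon}\quad\text{for }\Re(s)\geq 1-1/\lambda.
\]
With $\lambda=1+2m\deg F$ the resulting exponent is less than $1/4$, safely below the required $1/2$; since $H(s)$ is bounded in the region $\Re(s)\geq 1-1/\lambda$, the claimed bound on $D_{(F,G)}(s)$ follows. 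The main delicate point is the uniform verification of the factorisation $D_{(F,G)}(s)=L(s,\chi)\,H(s)$ across all primes and the proper choice of $\www$ absorbing all ramified or otherwise exceptional primes, but this reduces to routine local computations once $\www$ has been enlarged as above.
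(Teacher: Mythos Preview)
Your approach is essentially the same as the paper's: both identify $D_{(F,G)}$ with the quadratic Hecke (equivalently, Artin) $L$-function attached to $K(\theta,\sqrt{G(\theta,1)})/K(\theta)$, up to an Euler product that is holomorphic and non-vanishing on $\Re(s)>1/2$, and then invoke the convexity bound. The paper is slightly more explicit about two cosmetic points: it handles the possibly non-monic $F(x,1)$ by passing to $\widehat F$ with $F(s,at)=a\widehat F(s,t)$ (so that $\widehat\theta=a\theta$ is integral and the Dedekind--Kummer correspondence applies cleanly), and it collects the extra bad primes into a separate ideal $\www_1$ absorbed into the correction factor rather than enlarging the fixed $\www$, but neither affects the substance of your argument.
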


\begin{proof}
Let $a:=F(1,0)\in \OO_K\smallsetminus\{0\}$. Then $F(s,a t)=a\widehat{F}(s,t)$, where $\widehat{F}(s,1) \in \OO_K[s]$ is monic and irreducible.
Note that the constant
$\widehat{\theta}:=a\theta$
is a root of
$\widehat{F}(s,1)$.
Define the number field $H := K(\theta,\sqrt{G(\widehat{\theta}, a)}) = K(\theta,\sqrt{G(\theta,1)})$,
which clearly fulfills
$[H : K(\theta)] = 2$.

The non-trivial representation of $\text{Gal}(H/K(\theta))$ gives rise to the Artin $L$-function
\begin{equation*}
L(s,\chi)=\prod_{\mathfrak{P}}\left(1-\chi(\mathfrak{P})\norm_{K(\theta)/\Q}\mathfrak{P}^{-s}
\right)^{-1},
\end{equation*}
with the product running over the non-zero prime ideals $\mathfrak{P}$ of $K(\theta)$. The character $\chi(\mathfrak{P})$ is $0$ if $\mathfrak{P}$ is ramified in $H/K(\theta)$ and $1$ or $-1$ according to whether $\mathfrak{P}$ is split or inert in $H/K(\theta)$. This $L$-function is entire and does not vanish at $s=1$. The usual argument about split primes shows that
\begin{equation*}
\prod_{\mathfrak{P}\mid\ppp}(1+\chi(\mathfrak{P})\norm_{K(\theta)/\Q}\mathfrak{P}^{-s}) = 1 + \left(\sum_{\substack{\mathfrak{P}\mid\ppp\\f(\mathfrak{P}/\ppp)=1}}\chi(\mathfrak{P})\right)\norm\ppp^{-s} + O(\norm\ppp^{-2s}),
\end{equation*}
for every prime ideal $\ppp$ of $\OO_K$, where $f(\mathfrak{P}/\ppp)$ is the inertia degree.

In the following considerations, we assume that $\ppp$ is relatively prime to $a$ and to the conductors of the orders $\OO_K[\widehat{\theta}]$ in $K(\theta)$ and $\OO_{K(\theta)}[\sqrt{G(\widehat\theta,a)}]$ in $H$. Then the primes $\mathfrak{P}$ in $K(\theta)$ above $\ppp$ with $f(\mathfrak{P}/\ppp)=1$ are parameterized by the roots $\lambda$ of $\widehat{F}(s,1)$ modulo $\ppp$. If $\mathfrak{P}$ corresponds to the root $\lambda$, then we have an isomorphism
$
  \OO_{K(\theta)}/\mathfrak{P}\to \OO_{K}/\ppp
$
given by $\widehat{\theta}\mapsto \lambda$. 
Consequently,
\begin{equation*}
\chi(\mathfrak{P}) = \qr{G(\widehat{\theta},a)}{\mathfrak{P}} = \qrp{G(\lambda,a)}
\end{equation*}
and in particular,
\begin{equation*}
\sum_{\substack{\mathfrak{P}\mid\ppp\\f(\mathfrak{P}/\ppp)=1}}\chi(\mathfrak{P}) = 
\sum_{\substack{\lambda\bmod\ppp\\\widehat{F}(\lambda,1)
\equiv 0\bmod\ppp}}\qrp{G(\lambda,a)} = 
\sum_{\substack{\lambda\bmod\ppp\\
F(\lambda,1) 
\equiv 0\bmod\ppp}}\qrp{G(a\lambda,a)} 
= \rho_{(F,G)}(\ppp),
\end{equation*}
where we again relied on the fact that $G$ is of even degree. Let $\www_1$ be the product of all the prime ideals excluded above. We have shown that
\begin{equation*}
  L(s,\chi) = g_0(s)\prod_{\ppp\nmid\www_1\www}\left(1+\frac{\rho_{(F,G)}(\ppp)}{\norm\ppp^s}\right) = g_1(s)\prod_{\ppp\nmid\www}\left(1+\frac{\rho_{(F,G)}(\ppp)}{\norm\ppp^s}\right) = g_2(s)D_{(F,G)}(s),
\end{equation*}
where $g_0, g_1, g_2$ are holomorphic functions and have ablosutely convergent Euler products on 
$\Re(s) > 1/2$ that do not vanish there. Hence,
for $\Re(s) > 1/2+\epsilon$.
we have 
$1 \ll_{\epsilon} g_2(s) \ll_{\epsilon} 1$.

Convexity bounds,  
for example \cite[Theorem III.14 A]{MR2135107} with $\eta=1/(2\dg\deg F)$, 
show that
\begin{equation*}
  L(s,\chi)\ll (1+|\Im(s)|)^{1/2} \quad\text{ in }\quad 1-\eta \leq \Re(s) \leq 1+\eta,
\end{equation*}
which extends to the region $1-\eta \leq \Re(s)$ by absolute convergence of $L(s,\chi)$ in $\Re(s)>1$.
\end{proof}
We 
shall need to handle 
averages of
volumes 
of certain regions (see~\eqref{eq:ferume}).
The next version of 
Abel's sum formula
is optimally
tailored for this task. 
\begin{lemma}\label{lem:abel discrete}
  Let $g,\omega:\N\to\C$ be functions, and write $G(u):=\sum_{n\leq u}g(n)$. Let $X\geq 1$, $A,B\geq 0$ with $A+B<1$,  and assume that
  \begin{enumerate}
  \item[(1)] $\omega(n)=0$ for $n\geq X$,
  \item[(2)] there is $Q\geq 0$ such that $|\omega(n)-\omega(n+1)|\leq Qn^{-B}$ holds for all $n\in\N$,
  \item[(3)] there are $\lambda_0\in\C$, $M\geq 0$, such that $|G(n)-\lambda_0| \leq M n^{-A}$ holds for all $n\in\N$.
  \end{enumerate}
Then
\begin{equation*}
  \left|\sum_{n\leq X}g(n)\omega(n) - \lambda_0\omega(1)\right| \leq MQ\left(1+\frac{X^{1-A-B}}{1-A-B}\right).
\end{equation*}
\end{lemma}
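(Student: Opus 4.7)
The plan is a routine application of Abel (partial) summation, rearranged so that the smallness comes from the hypothesis $(3)$ on $G(n)-\lambda_0$ rather than on $G(n)$ itself. Set $G(0):=0$ so that $g(n)=G(n)-G(n-1)$ for all $n\geq 1$. Then a direct index shift gives, for any integer $M\geq 1$,
\[
\sum_{n=1}^{M}g(n)\omega(n) = G(M)\omega(M)+\sum_{n=1}^{M-1}G(n)\bigl(\omega(n)-\omega(n+1)\bigr).
\]
By hypothesis $(1)$, taking $M$ to be any integer with $M\geq X$ kills the boundary term $G(M)\omega(M)$, and letting $M\to\infty$ leaves only finitely many nonzero terms on the right. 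Hence
\[
\sum_{n\leq X}g(n)\omega(n) = \sum_{n=1}^{\infty}G(n)\bigl(\omega(n)-\omega(n+1)\bigr).
\]

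Next I would use the telescoping identity $\sum_{n=1}^{\infty}(\omega(n)-\omega(n+1))=\omega(1)$, again valid because $\omega$ has finite support by $(1)$, to subtract $\lambda_0\omega(1)$ term by term and obtain the key identity
\[
\sum_{n\leq X}g(n)\omega(n)-\lambda_0\omega(1) = \sum_{n=1}^{\infty}\bigl(G(n)-\lambda_0\bigr)\bigl(\omega(n)-\omega(n+1)\bigr).
\]
Taking absolute values and applying hypotheses $(2)$ and $(3)$ termwise gives
\[
\left|\sum_{n\leq X}g(n)\omega(n)-\lambda_0\omega(1)\right| \leq MQ\sum_{n}n^{-A-B},
\]
where the summation is restricted to those $n$ for which $\omega(n)-\omega(n+1)\neq 0$; hypothesis $(1)$ forces all such $n$ to satisfy $n< X$, so in particular $n\leq X$.

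It remains to bound $\sum_{1\leq n\leq X}n^{-A-B}$. Comparing with the integral, and using $A+B<1$, yields
\[
\sum_{1\leq n\leq X}n^{-A-B}\leq 1+\int_{1}^{X}u^{-A-B}\,\mathrm{d}u = 1+\frac{X^{1-A-B}-1}{1-A-B}\leq 1+\frac{X^{1-A-B}}{1-A-B},
\]
which gives the claimed bound. The only mild technical point is the handling of the boundary $n$ near $X$ and the need to start the Abel summation from the convention $G(0)=0$; neither is a real obstacle, since hypothesis $(1)$ makes all infinite sums finite and the condition $A+B<1$ ensures the integral is summable. No step requires more than elementary manipulation.
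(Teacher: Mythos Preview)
Your proof is correct and follows essentially the same route as the paper's: Abel summation to write $\sum g(n)\omega(n)=\sum G(n)(\omega(n)-\omega(n+1))$, subtraction of $\lambda_0$ via the telescoping identity $\sum(\omega(n)-\omega(n+1))=\omega(1)$, and the termwise bound $MQ\sum_{n\leq X}n^{-A-B}\leq MQ(1+X^{1-A-B}/(1-A-B))$ from hypotheses (2), (3) and an integral comparison. The only cosmetic difference is that you pass through an infinite sum before cutting back to $n\leq X$, whereas the paper keeps the sum finite throughout; one small notational slip is your reuse of the symbol $M$ for the summation cutoff, which clashes with the constant $M$ from hypothesis (3).
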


\begin{proof}
  Telescoping and using assumption \emph{(1)}, we see that
  \begin{align*}
    \sum_{n\leq X}g(n)\omega(n) &= \sum_{n\leq X}G(n)(\omega(n)-\omega(n+1))\\ &= \lambda_0\sum_{n\leq X}(\omega(n)-\omega(n+1)) + \sum_{n\leq X}(G(n)-\lambda_0)(\omega(n)-\omega(n+1)).
  \end{align*}
 The first summand is equal to $\lambda_0\omega(1)$, and, using assumptions \emph{(2)} and \emph{(3)}, the last sum has absolute value at most
 \begin{equation*}
   MQ\sum_{n\leq X}n^{-A-B}\leq MQ\left(1+\int_{1}^X\frac{\mathrm{d}u}{u^{A+B}}\right)\leq MQ\left(1+\frac{X^{1-A-B}}{1-A-B}\right).
 \end{equation*}
\end{proof}

\section{Proof of Theorem \ref{thm:main 1}}
\label{sec:reduction}

In this section we 
assume the validity of 
Theorem~\ref{thm:main 2}
and we
prove 
Theorem~\ref{thm:main 1} from it. 
The finite
set $S_\bad$ will contain all prime ideals that we want to exclude at various steps of our argument. It will grow during the proof, but it will never depend on anything but $K$, $\classrep$, $\mathfrak{F}$
and
$f$.
In Theorems~\ref{thm:main 1} and~\ref{thm:main 2},
we will always assume that none of the forms $F_i(s,t)$ is proportional to $t$. This can be achieved by a unimodular transformation $\phi_a:K^2\to K^2$, $(s,t)\mapsto (s,as+t)$, for suitable $a\in\OO_K$. This map $\phi_a$ extends to $K_\infty^2\to K_\infty^2$ in an obvious way, transforming $\mathcal{D}$ to $\phi_a(\mathcal{D})$. Clearly, all our hypotheses are still satisfied.

\subsection{Simple reductions} 
\begin{lemma}
\label{lem:thinning}
Let $\c{P}=(\c{D},(\sigma,\tau),\www)$ be an $\mathfrak{F}$-admissible triplet, and $k \in \N$. Then
\begin{equation*}
\c{P}^k:=(\c{D},(\sigma,\tau),\www^k)
\end{equation*}
is also an $\mathfrak{F}$-admissible triplet and
$
D(\mathfrak{F},f,\c{P};X)
\gg_k
D(\mathfrak{F},f,\c{P}^k;X)
$.
\end{lemma}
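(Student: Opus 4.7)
The plan is to establish both claims by a direct inspection of the definitions; the cornerstone observation is that the ``flat'' operation $\aaa \mapsto \aaa^\flat$ of~\eqref{flats} depends on $\www$ only through the set of prime ideals dividing it, and since $\www$ and $\www^k$ share the same prime divisors, the flat is literally unchanged when $\www$ is replaced by $\www^k$.

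For the $\mathfrak{F}$-admissibility of $\c{P}^k$, I would first note that $(s,t)\equiv(\sigma,\tau)\bmod\www^k$ implies $(s,t)\equiv(\sigma,\tau)\bmod\www$, so $M^*(\c{P}^k,\infty)\subseteq M^*(\c{P},\infty)$. Since $\c{D}$ and $(\sigma,\tau)$ are the same in both triplets, condition \eqref{eq:f not zero} transfers trivially, while \eqref{eq:D not zero} and \eqref{eq:qr is one} for $\c{P}^k$ follow from the corresponding conditions for $\c{P}$, restricted to the subset $M^*(\c{P}^k,\infty)$, together with the invariance of the flat operation that ensures $F_i(s,t)^\flat$ is the same for both triplets.

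For the inequality between divisor sums, the crux is showing that $r(\mathfrak{F},f,\c{P};s,t)\geq 0$ on $M^*(\c{P},\infty)$. The factor $\one_f(F_i(s,t)^\flat)$ is positive because $1+f(\ppp)>0$ by definition of $\c{Z}_K$. The remaining factor factorises by multiplicativity of the Jacobi symbol as
\[
\sum_{\ddd_i\mid F_i(s,t)^\flat}\qr{G_i(s,t)}{\ddd_i}
=
\prod_{\ppp\mid F_i(s,t)^\flat}\;\sum_{j=0}^{v_\ppp(F_i(s,t)^\flat)}\qrp{G_i(s,t)}^j,
\]
and each local factor is either $v_\ppp(F_i(s,t)^\flat)+1$, $1$, or $0$ according to whether $\qrp{G_i(s,t)}$ equals $1$, $0$, or $-1$ (in the last case depending on parity). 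In particular each factor, and hence $r$, is non-negative.

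Combining the two observations, the invariance of the flat gives $r(\mathfrak{F},f,\c{P};s,t)=r(\mathfrak{F},f,\c{P}^k;s,t)$ on $M^*(\c{P}^k,\infty)$, so dropping the non-negative contributions of $M^*(\c{P},X)\smallsetminus M^*(\c{P}^k,X)$ yields
\[
D(\mathfrak{F},f,\c{P};X)
\;\geq\;
\sum_{(s,t)\in M^*(\c{P}^k,X)}r(\mathfrak{F},f,\c{P}^k;s,t)
\;=\;
D(\mathfrak{F},f,\c{P}^k;X),
\]
which is stronger than the asserted bound (the implicit constant can be taken to be $1$, independent of $k$). There is no substantive obstacle; the only points requiring care are the verification of $r\geq 0$ via the local factorisation above, and the (trivial but essential) observation that $\www$ and $\www^k$ define the same flat operation.
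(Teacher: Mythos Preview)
Your proof is correct and follows essentially the same approach as the paper's: both rely on the observation that $\www$ and $\www^k$ have the same prime divisors (so the flat operation is unchanged), the inclusion $M^*(\c{P}^k,X)\subseteq M^*(\c{P},X)$, and the non-negativity of $r$. Your explicit local factorisation of the Jacobi sum to verify $r\geq 0$ is a welcome addition, as the paper simply asserts this fact without justification.
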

\begin{proof} 
  Since $\www$ and $\www^k$ have the same prime factors, the ideals $\aaa^\flat$, for $\aaa\in\ideals$, are the same for $\www$ and $\www^k$. Moreover, $M^*(\c{P}^k,X)\subseteq M^*(\c{P},X)$. This shows that, $\c{P}^k$ is admissible, and moreover $r(\mathfrak{F},f,\c{P}; s,t)=r(\mathfrak{F},f,\c{P}^k; s,t)$. The lemma follows immediately, since $r(\mathfrak{F},f,\c{P}; s,t)\geq 0$.
\end{proof}

It is enough to prove Conjecture \ref{conj:H} for all strongly $\mathfrak{F}$-admissible triplets. Indeed, given any $\mathfrak{F}$-admissible triplet $\c{P}=(\c D,(\sigma,\tau),\www)$, we may assume it to be strongly $\mathfrak{F}$-admissible. To this end, we may replace $\www$ by any positive power of itself, thanks to Lemma \ref{lem:thinning}. By \eqref{eq:f not zero}, we can find $k\in \N$, such that $\c P^k$ satisfies \eqref{eq:same power}.

By including in $S_\bad$ enough small prime ideals and replacing $\www$ by a high enough power, we can moreover assume that
\begin{equation}\label{eq:w leading coeffs}
2\classrep\prod_{i}F_i(1,0)\prod_{i\neq j}\res(F_i,F_j)\mid \www.
\end{equation}

\subsection{
Eclipsing the trivial $G_i$}
\begin{lemma}\label{lem:jacobi trivial}
Whenever
$i\in\{1,\ldots,n\}$
is
such that $G_i(\vt_i)\in K(\vt_i)^{\times 2}$,
then
for all $s,t\in \OO_K$ with $s\OO_K + t\OO_K = \classrep$
we have 
  \begin{equation*}
    \sum_{\ddd_i\mid F_i(s,t)^\flat}\qr{G_i(s,t)}{\ddd_i} = \tau_K(F_i(s,t)^\flat).
  \end{equation*}
\end{lemma}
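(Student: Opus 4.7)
The plan is to establish the pointwise equality $\qr{G_i(s,t)}{\ppp} = 1$ for every prime $\ppp$ occurring in $F_i(s,t)^\flat$. Once this is in place, complete multiplicativity of the Jacobi symbol in its lower argument forces $\qr{G_i(s,t)}{\ddd_i} = 1$ for every divisor $\ddd_i \mid F_i(s,t)^\flat$, so that the sum collapses to $\sum_{\ddd_i \mid F_i(s,t)^\flat} 1 = \tau_K(F_i(s,t)^\flat)$.

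Fix such a $\ppp$ and abbreviate $F=F_i$, $G=G_i$, $\theta=\theta_i$. By definition of $\flat$ we have $\ppp \nmid \www$. I enlarge $S_\bad$ so that $\www$ becomes divisible, in addition to the divisors already present by \eqref{eq:w leading coeffs}, by $\res(F,G)\in\OO_K\smallsetminus\{0\}$ (nonzero since $F$ is irreducible and does not divide $G$), by the discriminant of a minimal polynomial of $\theta$ over $K$, by the conductor of an order of the form $\OO_K[a\theta] \subseteq \OO_{K(\theta)}$ for a suitable nonzero $a\in\OO_K$, and by the primes appearing in the denominator of a fixed $\alpha\in K(\theta)^\times$ with $\alpha^2 = G(\theta,1)$. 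All of these are finite sets of primes depending only on $\mathfrak F$. A quick check rules out $\ppp\mid t$: otherwise $\ppp\mid F(s,t)$ together with $\ppp\nmid F(1,0)$ would force $\ppp\mid s$ and hence $\ppp\mid s\OO_K+t\OO_K = \classrep \mid\www$, a contradiction. With $\ppp\nmid t$ in hand, the congruence $F(s,t)\equiv 0\pmod{\ppp}$ says that $st^{-1}\in\OO_K/\ppp$ is a root of $F(x,1)$; by the standard correspondence between such roots and degree-one primes of $\OO_{K(\theta)}$ above $\ppp$ (valid because $\ppp$ avoids the discriminant and the conductor), there is a prime $\PPP$ of $\OO_{K(\theta)}$ with $f(\PPP/\ppp) = 1$ and $\theta \equiv st^{-1}\pmod{\PPP}$.

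Using the identification $\kappa(\PPP) = \OO_K/\ppp$, the congruence $G(s,t) \equiv G(\theta t,t) = t^{\deg G}\,G(\theta,1)\pmod{\PPP}$ can be read as a congruence modulo $\ppp$ in $\OO_K$. Since $\deg G$ is even and $G(\theta,1)\equiv \bar\alpha^2\pmod{\PPP}$, with $\bar\alpha\in\kappa(\PPP)=\OO_K/\ppp$ the reduction of $\alpha$ (well-defined by our choice of $S_\bad$), the right-hand side equals $(t^{\deg G/2}\,\bar\alpha)^2$ modulo $\ppp$. Hence $G(s,t)$ is a square modulo the odd prime $\ppp$, so $\qr{G(s,t)}{\ppp}\in\{0,1\}$; the value $0$ would mean $\ppp\mid\gcd(F(s,t),G(s,t))$ and therefore $\ppp\mid\res(F,G)\mid\www$, which is excluded. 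I conclude $\qr{G(s,t)}{\ppp} = 1$, as required. The only real subtlety lies in handling the non-integrality of $\theta$ and $\alpha$, which is precisely what motivates the extra primes added to $S_\bad$.
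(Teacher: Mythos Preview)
Your proof is correct; the core reduction---show $\qr{G_i(s,t)}{\ppp}=1$ for every prime $\ppp\mid F_i(s,t)^\flat$---is exactly the paper's, but the execution differs. The paper never leaves $K[S]$: from the hypothesis it extracts a polynomial identity $G_i(S,1)=h(S)^2+c(S)F_i(S,1)$ with $h,c\in K[S]$, homogenises, clears denominators by a constant $b$, and then for $\ppp\mid F_i(s,t)^\flat$ reads off directly that $G_i(s,t)t^{\text{even}}\equiv H(s,t)^2 t^{\text{even}}\pmod\ppp$, after enlarging $S_\bad$ by the primes in $b\cdot\res(bH,F_i)$ to exclude $H(s,t)\equiv 0$. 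You instead pass to the arithmetic of $K(\theta)$, invoking the Dedekind--Kummer correspondence to locate a degree-one prime $\PPP\mid\ppp$ with $\theta\equiv st^{-1}\bmod\PPP$ and reducing the square root $\alpha$ of $G(\theta,1)$ there. Your route is the more conceptual one, and is in fact the mechanism the paper itself uses later in Lemma~\ref{lem:artinakos}; the paper's route here is more elementary, needing no primes of $K(\theta)$ and only a single polynomial identity over $K$.
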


\begin{proof}
  The isomorphism $K[S]/F_i(S,1)\to K(\vt_i)$, $S\mapsto \theta_i$, sends $G_i(S,1)$ to $G_i(\vt_i)$. Hence,
\begin{equation*}
G_i(S,1)=h(S)^2 + c(S)F_i(S,1),
\end{equation*}
with polynomials $h(S),c(S) \in K[S]$, such that $F_i(S,1)\nmid h(S)$. Let $d$ be the maximum of the degrees of $G_i(S,1)$, $h(S)^2$, $c(S)F_i(S,1)$. Re-homogenizing, we obtain
\begin{equation*}
  G_i(S,T)T^{d-\deg G_i} = H(S,T)^2T^{d-2\deg H} + C(S,T)T^{d-\deg C-\deg F_i}F_i(S,T),
\end{equation*}
with forms $H,C\in K[S,T]$. Letting $b\in\OO_K$ such that $bH(S,T)\in\OO_K[S,T]$, we find that  $\res(bH(S,T),F(S,T))\in\OO_K\smallsetminus\{0\}$. After adding to $S_\bad$ all prime ideals that divide $b\res(bH(S,T),F(S,T))$, and all modulo which the form $C$ can not be reduced, we obtain, for all $s,t\in\OO_K$ and all $\ppp\mid F_i(s,t)^\flat$,
\begin{equation*}
  \qr{G_i(s,t)t^{d-\deg G}}{\ppp} = \qr{H(s,t)^2t^{d-2\deg H}}{\ppp}.
\end{equation*}
Using
$s\OO_K + t\OO_K = \classrep$
and 
$\ppp\nmid F_i(1,0)$,
we see that if
$\ppp\mid t$ then $\ppp\mid s$, which shows that
$\ppp\mid\classrep\mid\www$, a contradiction. Hence, $t$ is invertible modulo $\ppp$ and
using that $\deg G$ is even, we derive
\begin{equation*}
\qr{G_i(s,t)}{\ppp} = \qr{H(s,t)}{\ppp}^2 \qr{t}{\ppp}^{\deg G-2\deg H} = \qr{H(s,t)}{\ppp}^2=1
.\end{equation*}
In the last equality, we were allowed to exclude the case $H(s,t)\equiv 0\bmod\ppp$ due to the condition $\ppp\nmid\res(bH(S,T),F_i(S,T))$.
\end{proof}
By possibly reordering the $(F_i,G_i)\in\mathfrak{F}$, we may assume that
\begin{equation*}
G_i(\vt_i)
\begin{cases}
  \in K(\vt_i)^{\times 2} &\text{ for }1\leq i \leq  \rho(\mathfrak{F}),\\ 
  \notin K(\vt_i)^{\times 2} &\text{ for }\rho(\mathfrak{F})+1\leq i\leq n.
\end{cases}
\end{equation*}
We define $f'(\ppp):=0$ if $\ppp\in S_\bad$ and $f'(\ppp):=2f(\ppp)$ otherwise.
Note that
choosing $S_\bad$ large enough ensures that $f'\in \c{Z}_K$. 
All $n$ factors in the definition of $r(s,t)$ are non-negative and for $1\leq i\leq \rho(\mathfrak{F})$
 we  see by Lemma \ref{lem:jacobi trivial}
that
\begin{align*}
  \one_f(F_i(s,t)^\flat)\sum_{\ddd_i\mid F_i(s,t)^\flat}\qr{G_i(s,t)}{\ddd_i} &= \prod_{\ppp\mid F_i(s,t)^\flat}(1+f(\ppp))(v_\ppp(F_i(s,t))+1) 
\\&\geq 
\prod_{\ppp\mid F_i(s,t)^\flat}(1+(1+2f(\ppp)))
= \sum_{\substack{\ddd_i\mid F_i(s,t)\\\ddd_i+\www=\OO_K}}\mu_K^2(\ddd_i)\one_{f'}(\ddd_i).
\end{align*}
If $\rho(\mathfrak{F}) <n$, we let $\mathfrak F' := \{(F_{\rho(\mathfrak F)+1},G_{\rho(\mathfrak F)+1}), \ldots, (F_n,G_n)\}$ comprise those pairs in $\mathfrak{F}$ with $G_i(\vt_i)\notin K(\vt_i)^{\times 2}$. Then $\rho(\mathfrak F')=0$ and $c(\mathfrak F') = c(\mathfrak F)\leq 3$. Clearly, the strongly $\mathfrak F$-admissible triplet $\c P$ is also strongly $\mathfrak F'$-admissible.

\begin{lemma}
\label{lem:divisor reduction}
Let $\rho(\mathfrak{F}) <n$. Then, for any $\epsilon \in (0,1)$, the sum $D(\mathfrak{F},f,\c{P};X)$ is $\gg$ 

\begin{equation}\label{eq:divisor reduction}
\sum_{\substack{\ddd_1,\ldots,\ddd_{\rho(\mathfrak{F})}\in\ideals\\\norm\ddd_i \leq X^\epsilon\ \forall i\\
\ddd_i+\www=\OO_K\ \forall i\\
\ddd_i+\ddd_j=\OO_K\ \forall i \neq j
}}
\left(\prod_{i=1}^{\rho(\mathfrak{F})}\mu_K^2(\ddd_i)\one_{f'}(\ddd_i)\right)
\hspace{-0.2cm}
\sum_{\substack{(\sigma_i,\tau_i)\bmod{\ddd_i}\ \forall i\\\sigma_i\OO_K + \tau_i\OO_K + \ddd_i = \OO_K\\ 
F_{i}(\sigma_i,\tau_i)\equiv 0 \bmod{\ddd_i}\ \forall i}}
\sum_{\substack{(s,t)\in M^*(\c{P},X)\\ 
(s,t) \equiv (\sigma_i,\tau_i)\bmod{\ddd_i}\  \forall i
}}
\hspace{-0,5cm}
r(\mathfrak{F}',f,\c{P}; s,t).
\end{equation}
In these sums, the quantifiers $\forall i$ run over all $i\in \{1,\ldots,\rho(\mathfrak F)\}$.
\end{lemma}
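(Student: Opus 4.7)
The plan is to start from $D(\mathfrak{F},f,\c{P};X)=\sum_{(s,t)\in M^*(\c{P},X)}r(\mathfrak{F},f,\c{P};s,t)$ and factor $r$ as the product of the factors indexed by $1\leq i\leq\rho(\mathfrak F)$ and by $\rho(\mathfrak F)<i\leq n$. The second block is by definition $r(\mathfrak F',f,\c{P};s,t)$, while the first block consists of non-negative factors each bounded below, by the inequality established just above the statement of the lemma, by $\sum_{\ddd_i\mid F_i(s,t),\,\ddd_i+\www=\OO_K}\mu_K^2(\ddd_i)\one_{f'}(\ddd_i)$. Since every factor involved is non-negative, I may freely truncate to $\norm\ddd_i\leq X^\epsilon$ and interchange the order of summation to pull the $\ddd_i$-sums outside.

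For each fixed squarefree $\ddd_i$ with $\ddd_i+\www=\OO_K$, the condition $\ddd_i\mid F_i(s,t)$ is equivalent to $(s,t)\equiv(\sigma_i,\tau_i)\bmod\ddd_i$ for some $(\sigma_i,\tau_i)$ running through the zero locus of $F_i$ modulo $\ddd_i$. Unfolding each $\ddd_i$-divisibility in this way produces exactly the nested structure in~\eqref{eq:divisor reduction}, modulo two decorating conditions on the summation ranges that I must check are automatic: the pairwise coprimality $\ddd_i+\ddd_j=\OO_K$ for $i\neq j$, and the primitivity $\sigma_i\OO_K+\tau_i\OO_K+\ddd_i=\OO_K$.

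For the coprimality, any common prime divisor $\ppp$ of $\ddd_i$ and $\ddd_j$ would satisfy $\ppp\mid F_i(s,t)$ and $\ppp\mid F_j(s,t)$ for some $(s,t)\in M^*(\c{P},X)$; since $\classrep\mid\www$ and $\ppp\nmid\www$ we have $\ppp\nmid\classrep$, so $s\OO_K+t\OO_K=\classrep$ forces one of $s,t$ to be a unit modulo $\ppp$, and dehomogenising yields a common root of $F_i$ and $F_j$ modulo $\ppp$; this, together with $\ppp\nmid F_i(1,0)F_j(1,0)$ from~\eqref{eq:w leading coeffs}, forces $\ppp\mid\res(F_i,F_j)$, and a second appeal to~\eqref{eq:w leading coeffs} places $\ppp$ inside $\www$, contradicting $\ddd_i+\www=\OO_K$. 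For the primitivity, $(s,t)\equiv(\sigma_i,\tau_i)\bmod\ddd_i$ combined with $s\OO_K+t\OO_K=\classrep$ and $\classrep+\ddd_i=\OO_K$ immediately delivers $\sigma_i\OO_K+\tau_i\OO_K+\ddd_i=\OO_K$. The main obstacle is therefore purely organisational, as all the quantitative content is already absorbed in the pointwise inequality for the first $\rho(\mathfrak F)$ factors of $r$; the remaining work is the unfolding of divisibility into congruences and the verification that the coprimality and primitivity decorations come for free.
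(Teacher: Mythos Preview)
Your proof is correct and follows the same route as the paper's own argument: use the pointwise lower bound for the first $\rho(\mathfrak F)$ factors, expand, interchange, truncate by non-negativity, and split into residue classes. You go further than the paper in verifying that the coprimality $\ddd_i+\ddd_j=\OO_K$ and the primitivity $\sigma_i\OO_K+\tau_i\OO_K+\ddd_i=\OO_K$ are automatic (the paper simply notes that non-negativity lets one impose extra restrictions on the $\ddd_i$), but this is a harmless elaboration rather than a different method.
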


\begin{proof}
This stems upon re-ordering the sum with respect to the factors $\ddd_i\mid F_i(s,t)$ and splitting into congruence classes $\bmod \ddd_i$. Since $r(s,t)\geq 0$, we are allowed to impose additional restrictions on the $\ddd_i$, such as $\norm\ddd_i \leq X^\epsilon$.  
\end{proof}

\begin{lemma}\label{lem:aux 1}
  Let $\classrep,\aaa \in\ideals$, $\classrep\mid\aaa$, and let $(\tilde{\sigma},\tilde{\tau})\in\classrep^2$ such that $\tilde{\sigma}\OO_K + \tilde{\tau}\OO_K + \aaa = \classrep$. Then there is $(\sigma,\tau)\in\classrep^2$ satisfying $(\sigma,\tau)\equiv (\tilde{\sigma},\tilde{\tau})\bmod\aaa$ and $\sigma\OO_K + \tau\OO_K = \classrep$.
\end{lemma}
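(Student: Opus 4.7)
The plan is to perform two successive approximation-theoretic adjustments on $(\tilde\sigma,\tilde\tau)$ by elements of $\aaa$.

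First, I would analyse the obstruction ideal. Set $\bbb:=\tilde\sigma\OO_K+\tilde\tau\OO_K\subseteq\classrep$; the hypothesis $\bbb+\aaa=\classrep$ together with $\classrep\mid\aaa$ forces $\classrep\mid\bbb$, so that $\bbb=\classrep\ccc$ for an integral ideal $\ccc$ with $\ccc+\aaa\classrep^{-1}=\OO_K$. In particular, at every prime $\ppp\mid\ccc$ we have $v_\ppp(\aaa)=v_\ppp(\classrep)$, while $v_\ppp(\tilde\sigma),v_\ppp(\tilde\tau)\geq v_\ppp(\bbb)>v_\ppp(\classrep)$. The degenerate situation $(\tilde\sigma,\tilde\tau)=(0,0)$ forces $\aaa=\classrep$, in which case $\classrep$ admits two generators and one may simply take $(\sigma,\tau)$ to be such a pair; hence we assume $\bbb\neq 0$.

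Next, I would apply the approximation theorem for the Dedekind domain $\OO_K$ to produce $\alpha\in\aaa$ satisfying $v_\ppp(\alpha)=v_\ppp(\aaa)$ for each $\ppp\mid\ccc$ (the conditions at the finitely many primes in $\supp(\aaa)\cup\supp(\ccc)$ are mutually consistent because $\ccc$ is coprime to $\aaa\classrep^{-1}$). Setting $\sigma:=\tilde\sigma+\alpha$, the two summands have strictly different $\ppp$-adic valuations at each $\ppp\mid\ccc$, hence $v_\ppp(\sigma)=v_\ppp(\alpha)=v_\ppp(\classrep)$ there; at all other primes $\sigma\in\classrep$ automatically. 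Consequently $\sigma\OO_K=\classrep\ddd$ for some integral ideal $\ddd$ coprime to $\ccc$.

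Then I would choose $\beta\in\aaa$, again by approximation, so that $v_\ppp(\tilde\tau+\beta)=v_\ppp(\classrep)$ at every $\ppp\mid\ddd$, and set $\tau:=\tilde\tau+\beta$. This splits into two cases. If $\ppp\mid\ddd$ and $v_\ppp(\aaa)>v_\ppp(\classrep)$, the hypothesis $\bbb+\aaa=\classrep$ gives $\min(v_\ppp(\tilde\sigma),v_\ppp(\tilde\tau))=v_\ppp(\classrep)$; since $v_\ppp(\sigma)>v_\ppp(\classrep)$ together with $v_\ppp(\alpha)\geq v_\ppp(\aaa)>v_\ppp(\classrep)$ forces $v_\ppp(\tilde\sigma)>v_\ppp(\classrep)$, we must have $v_\ppp(\tilde\tau)=v_\ppp(\classrep)$, and this valuation is preserved by adding any $\beta\in\aaa$. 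If instead $v_\ppp(\aaa)=v_\ppp(\classrep)$, then (noting also that $\ppp\nmid\ccc$) we have complete freedom to impose $v_\ppp(\tilde\tau+\beta)=v_\ppp(\classrep)$ via approximation. The resulting $(\sigma,\tau)\in\classrep^2$ then satisfies $(\sigma,\tau)\equiv(\tilde\sigma,\tilde\tau)\bmod\aaa$ and has $\min(v_\ppp(\sigma),v_\ppp(\tau))=v_\ppp(\classrep)$ at every prime, which is exactly $\sigma\OO_K+\tau\OO_K=\classrep$.

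The only real bookkeeping obstacle is verifying, in the second case of the final step, that the approximation conditions defining $\beta$ are simultaneously compatible with membership $\beta\in\aaa$; this is routine since all prescribed valuations are $\geq v_\ppp(\aaa)$ and only finitely many primes are involved.
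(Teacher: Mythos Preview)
Your argument is correct: the two approximation steps do the job, and the case analysis at primes $\ppp\mid\ddd$ is handled properly (in Case~A the valuation of $\tilde\tau$ is already $v_\ppp(\classrep)$ and is preserved; in Case~B the local freedom in $\aaa/\aaa\ppp$ lets you avoid the single bad residue). One tiny edge case you should note explicitly: if $\ccc=\OO_K$ there is nothing to do, and otherwise the condition $v_\ppp(\sigma)=v_\ppp(\classrep)$ at some $\ppp\mid\ccc$ already guarantees $\sigma\neq 0$, so that $\ddd$ is a genuine integral ideal.

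The paper takes a different, shorter route. Rather than adjusting both coordinates by approximation, it leaves $\sigma:=\tilde\sigma$ untouched and modifies only $\tau$. The trick is to use finiteness of the class group: choose an integral ideal $\bbb$ in the inverse class of $\aaa$ so that $\bbb\aaa=w\OO_K$ is principal, and moreover arrange (by adjusting $\bbb$ within its class) that for every prime $\ppp\mid\tilde\sigma$ one has $\ppp\mid\bbb$ if and only if $v_\ppp(\tilde\tau)=v_\ppp(\classrep)$. Then $w\in\aaa$ and setting $\tau:=\tilde\tau+w$ gives $\sigma\OO_K+\tau\OO_K=\classrep$ in one stroke. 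Your approach is more hands-on and avoids the class-group manoeuvre entirely, at the cost of a second approximation step; the paper's approach is slicker but relies on being able to pick a representative of an ideal class with prescribed support, which is itself an approximation-type fact.
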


\begin{proof}
  Let $\bbb\in\ideals$ such that $\bbb\aaa=w\OO_K$ is a principal ideal, and such that any prime ideal $\ppp$ dividing $\tilde{\sigma}$ divides $\bbb$ if and only if it does not divide $\tilde{\tau}\classrep^{-1}$. We may then choose $\sigma:=\tilde{\sigma}$ and $\tau := \tilde{\tau} + w$.
\end{proof}
We next deploy~Theorem \ref{thm:main 2}
to
estimate the innermost sum in Lemma~\ref{lem:divisor reduction}.
\begin{lemma}\label{lem:apply crt}
Let $\rho(\mathfrak{F})<n$. There is a function $f_0\in \c{Z}_K$ and $\beta_0,\beta_1,\beta_2>0$, such that the following holds: for any $\ddd_1,\ldots,\ddd_{\rho(\mathfrak F)}\in\ideals$ and $(\sigma_i,\tau_i)\bmod \ddd_i$, satisfying the conditions under the first two sums in \eqref{eq:divisor reduction}, we have, with $\ddd:=\ddd_1\cdots\ddd_{\rho(\mathfrak F)}$, the asymptotic
 \begin{equation}\label{eq:prop result}
\sum_{\substack{(s,t)\in M^*(\c{P},X)\\ 
(s,t) \equiv (\sigma_i,\tau_i)\bmod{\ddd_i}\  \forall i
}}
\hspace{-0,5cm}
r(\mathfrak{F}',f,\c{P}; s,t) = 
  \beta_0X^2\frac{\one_{f_0}(\ddd)}{\norm\ddd^2} + O(X^{2-\beta_1}\norm\ddd^{\beta_2}).
 \end{equation}
The implicit constant in the error term is independent
of all
$\ddd_i$, $(\sigma_i,\tau_i)$.
\end{lemma}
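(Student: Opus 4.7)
The plan is to bundle all the congruence conditions via the Chinese Remainder Theorem into a single congruence of modulus $\ddd\www$, where $\ddd := \ddd_1\cdots\ddd_{\rho(\mathfrak F)}$, and then invoke Theorem \ref{thm:main 2}. The one subtle point making this work is that the constants $\beta_0,\beta_1,\beta_2$ and the function $f_0$ produced by Theorem \ref{thm:main 2} depend only on $\classrep,f,\mathfrak F',\c D,\www$---crucially, not on the base point of the triplet---and so will automatically be independent of the data $\ddd_i,(\sigma_i,\tau_i)$ as the statement requires. This uniformity of constants across different base points is the main subtlety I expect in the argument.

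First I would construct a new base point $(\sigma',\tau')\in\classrep^2$ capturing all the congruence conditions simultaneously. Using $\classrep\mid\www$ together with $\ddd_i+\www=\OO_K$ and $\sigma_i\OO_K+\tau_i\OO_K+\ddd_i=\OO_K$, each $(\sigma_i,\tau_i)$ may be replaced by a representative in $\classrep^2$ without changing the congruence class modulo $\ddd_i$. The Chinese Remainder Theorem then produces $(\tilde\sigma,\tilde\tau)\in\classrep^2$ congruent to $(\sigma,\tau)$ modulo $\www$ and to $(\sigma_i,\tau_i)$ modulo $\ddd_i$ for every $i$, with $\tilde\sigma\OO_K+\tilde\tau\OO_K+\ddd\www=\classrep$. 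Lemma \ref{lem:aux 1} then produces $(\sigma',\tau')\in\classrep^2$ with $\sigma'\OO_K+\tau'\OO_K=\classrep$ and $(\sigma',\tau')\equiv(\tilde\sigma,\tilde\tau)\bmod\ddd\www$. Forming $\c P' := (\c D,(\sigma',\tau'),\www)$ and $\c P'_\ddd := (\c D,(\sigma',\tau'),\ddd\www)$, the inclusion $M^*(\c P',\infty)\subseteq M^*(\c P,\infty)$ transfers strong $\mathfrak F$-admissibility (hence strong $\mathfrak F'$-admissibility) from $\c P$ to $\c P'$. Since the $\flat$-operation depends on $\www$ alone, the inner sum in \eqref{eq:prop result} agrees with $\sum_{(s,t)\in M^*(\c P'_\ddd,X)} r(\mathfrak F',f,\c P';s,t)$.

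The main verification is then hypothesis \eqref{eq:asympt thm divisor cond} of Theorem \ref{thm:main 2} for $\c P'_\ddd$ and $\mathfrak F'$: for every prime $\ppp\mid\ddd$ and every $(s,t)\in M^*(\c P'_\ddd,\infty)$ one needs $\ppp\nmid\prod_{j\in\mathfrak F'}F_j(s,t)$. Such a $\ppp$ divides a unique $\ddd_i$ with $i\leq\rho(\mathfrak F)$ and is coprime to $\www$, forcing $\ppp\mid F_i(s,t)$; the standard resultant argument, available since $\res(F_i,F_j)\mid\www$ by \eqref{eq:w leading coeffs} and $s\OO_K+t\OO_K=\classrep$ is coprime to $\ppp$, then prevents $\ppp\mid F_j(s,t)$ for any $j\neq i$, in particular for every $j>\rho(\mathfrak F)$. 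After enlarging $\www$ to absorb the ideal $\www_0$ of Theorem \ref{thm:main 2} via Lemma \ref{lem:thinning}, a direct application of Theorem \ref{thm:main 2} to the pair $(\c P',\c P'_\ddd)$ yields the asymptotic, with constants depending on $\c P'$ only through $(\c D,\www)$ and hence delivering the required uniformity in $\ddd_i,(\sigma_i,\tau_i)$.
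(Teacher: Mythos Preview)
Your proposal is correct and follows essentially the same route as the paper: combine the congruences via the Chinese Remainder Theorem, use Lemma~\ref{lem:aux 1} to obtain an $\classrep$-primitive base point, verify condition~\eqref{eq:asympt thm divisor cond} for $\mathfrak F'$ through the resultant divisibility in~\eqref{eq:w leading coeffs}, and invoke Theorem~\ref{thm:main 2} after absorbing $\www_0$ into $\www$. Your explicit remark that $\beta_0,f_0$ depend on the triplet only through $(\c D,\www)$, and hence are uniform in the $\ddd_i,(\sigma_i,\tau_i)$, is exactly the point the paper relies on implicitly.
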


\begin{proof}
  The Chinese remainder theorem and the coprimality conditions on $\ddd_1,\ldots,\ddd_{\rho(\mathfrak F)}, \www$ allow us to express the congruences $(s,t)\equiv (\sigma,\tau)\bmod\www$ and $(s,t)\equiv (\sigma_i,\tau_i)\bmod\ddd_i$ for all $i$ as one congruence $(s,t)\equiv (\tilde{\sigma},\tilde{\tau})\bmod \ddd\www$. The pair $(\tilde{\sigma},\tilde{\tau})\in\OO_K^2$ then necessarily satisfies $\tilde{\sigma}\OO_K + \tilde{\tau}\OO_K + \ddd\www = \classrep$. Using Lemma \ref{lem:aux 1}, we may thus assume that $\tilde{\sigma}\OO_K + \tilde{\tau}\OO_K = \classrep$. 

The triplet $\c P' := (\c D, (\tilde\sigma,\tilde\tau), \www)$ is strongly $\mathfrak F'$-admissible. Moreover $\ddd$ satisfies the condition \eqref{eq:asympt thm divisor cond} in Theorem \ref{thm:main 2}, since $\classrep\prod_{i, j}\res(F_i,F_j)|\www$, and since $\ddd_i+\www=\OO_K$ for all $i$.

The sum in the lemma equals
\begin{equation*}
  \sum_{(s,t)\in M^*(\c P_\ddd', X)}r(\mathfrak F',f,\c P'; s,t), 
\end{equation*}
so the lemma 
stems
from Theorem \ref{thm:main 2}, once we enlarge $S_\bad$ and replace $\www$ by a sufficiently high power to ensure that $\www_0\mid \www$. 
\end{proof}

Using the bound $|\one_{f'}(\ddd_i)|\ll \norm\ddd_i$, we see that the error terms arising from substituting \eqref{eq:prop result} into \eqref{eq:divisor reduction} are $\ll X^{2-\beta_1+\epsilon\rho(\mathfrak F)(\beta_2+3)}$. 
Finally,
choosing $\epsilon$ small enough makes
the exponent smaller than $2$. 

Let us consider the main term. For a form $F \in \OO_K[s,t]$, irreducible over $K$ and not divisible by $t$
and for
$\ddd\in\ideals$ we define
\begin{equation}\label{eq:tau F defin}
\uptau_F(\ddd):=\card\{\mu\in\OO_K/\ddd 
:
F(\mu,1)\equiv 0\bmod \ddd \}.
\end{equation}
Using \eqref{eq:w leading coeffs}, we obtain
for all $\ddd\in\ideals$ with $\ddd+\www=\OO_K$,
\begin{equation*}
  \sum_{\substack{
(\sigma,\uptau)\bmod{\ddd}\\
F(\sigma,\tau)\equiv 0 \bmod{\ddd}
\\
\sigma\OO_K + \tau\OO_K + \ddd = \OO_K
}} \hspace{-0,4cm}
1
= \uptau_{F}(\ddd)\phi_K(\ddd).
\end{equation*}
Let us now introduce the function
\begin{equation*}
L(\ddd):=\one_{f'}(\ddd)\one_{f_0}(\ddd)\norm(\ddd)^{-1}\phi_K(\ddd)\sum_{\ddd_1\cdots \ddd_{\rho(\mathfrak F)}=\ddd}\prod_{i=1}^{\rho(\mathfrak{F})}\uptau_{F_{i}}(\ddd_i).
\end{equation*}
To finish the proof of Theorem \ref{thm:main 1} in the case $\rho(\mathfrak{F})<n$, it remains to show that
\begin{equation*}
\sum_{\substack{\norm\ddd \leq X^\epsilon \\ \ddd+\www=\OO_K}}\mu_K^2(\ddd)\frac{L(\ddd)}{\norm\ddd}\gg(\log X)^{\rho(\mathfrak{F})}.
\end{equation*}
This bound
can be proved in a straightforward manner
by alluding to
the generalisation 
of Wirsing's theorem
to all number fields as
supplied in~\cite[Lemma 2.2]{ffrei}.
The required estimate
\[\sum_{\norm\ppp\leq X}
\frac{\uptau_{F_{i}}(\ppp)}{\norm\ppp}
\log \norm\ppp=\log X +O(1)
\]
follows from the prime ideal theorem for the number field $K(\vt_i)$.

Finally, if $\rho(\mathfrak{F})=n$, we proceed as in Lemma \ref{lem:divisor reduction} to obtain a lower bound for $D(\mathfrak{F},f,\c P;s,t)$ as in \eqref{eq:divisor reduction}, but with $r(\mathfrak{F}',f,\c P; s,t)$ replaced by $1$. Arguing
as in Lemma \ref{lem:apply crt}
and using M\"obius inversion as
in the proof of 
Lemma \ref{lem:counting with coprime}, 
the innermost sum then becomes
\begin{equation*}
  \sum_{(s,t)\in M^*(\c P'_\ddd,X)}1 = \sum_{\substack{\aaa\in\ideals\\\aaa+\ddd\www\classrep^{-1}=\OO_K\\\norm \aaa\ll X}}\card\left(((\sigma^*,\tau^*)+(\aaa\classrep\ddd\www)^2) \cap X^{1/m}\mathcal{D}\right),
\end{equation*}
for some $(\sigma^*,\tau^*)\in\OO_K^2$. By lattice point counting, the summand for $\aaa$ is
\begin{equation*}
\card\left((\aaa\classrep\ddd\www)^2 \cap (-(\sigma^*,\tau^*)+X^{1/m}\mathcal{D})\right)=\frac{c_K X^2\vol\c D}{\norm(\aaa\classrep\ddd\www)^2}+O\left(\left(\frac{X}{\norm\aaa}\right)^{2-1/m} + 1\right).
\end{equation*}
Summing this over all $\aaa$ yields 
a positive constant
$\beta_0=\beta_0(\classrep,\c D,\www)$,
such that  
\begin{equation*}
\sum_{(s,t)\in M^*(\c P'_\ddd,X)}1 = \beta_0
\frac{X^2}{\norm\ddd^2} + O(X^{2-1/m}\log X)
.\end{equation*}
We may use this asymptotic instead of Lemma~\ref{lem:apply crt} to proceed as in the case $\rho(\mathfrak{F})<n$. 
This completes our proof of Theorem \ref{thm:main 1}.

\section{Proof of Theorem \ref{thm:main 2}: 
Asymptotics for divisor sums}
\label{sec:divisor sum asymptotics}
Recall that we have 
shown that it is sufficient to consider the case 
when
none of the forms $F_i$ is proportional to $t$. The ideal $\www_0$ will be modified throughout the proof, but it will only depend on $K,\classrep,\mathfrak{F},f$. We start by assuming that $\www_0$ satisfies \eqref{eq:w leading coeffs}. Let $\mathfrak{F}$ be a system of forms as in the theorem, and $\www$ be a strongly $\mathfrak{F}$-admissible triplet with $\www_0\mid\www$. Moreover, let $\ddd\in\ideals$
satisfy \eqref{eq:asympt thm divisor cond}.

\subsection{The Dirichlet hyperbola trick}
\label{finalfinal}
Let us recall that the expression
\begin{equation*}
\sum_{\substack{(s,t)\in M^*(\c{P}_\ddd,X)}}
\hspace{-0,3cm}
r(\mathfrak{F},f,\c{P};s,t)
\end{equation*}
can be recast as
\begin{equation}
\label{eq:complete93}
\hspace{-0.5cm}
\sum_{\substack{(s,t)\in M^*(\c P_\ddd,X)}}
\hspace{0,1cm}
\prod_{i=1}^n\one_{f}(F_i(s,t)^\flat)\left(\sum_{\substack{\ccc_i\mid F_i(s,t)^\flat}}\qr{G_i(s,t)}{\ccc_i}\right).
\end{equation}
Defining $\www_i:=\prod_{\ppp\mid\www}\ppp^{v_\ppp(F_i(\sigma,\tau))}$
makes apparent,
once \eqref{eq:same power} has been taken into account,
that $F_i(s,t)^\flat=F_i(s,t)\www_i^{-1}$. 
Furthermore, for each  $(s,t)\in M^*(\c P_\ddd,X)$
we have the following
inequalities,
\begin{equation*}
  \norm F_i(s,t)^\flat =\norm\www_i^{-1}\prod_{v\in\archplaces}\absv{F_i(s,t)}^{\locdegv} \ll \prod_{v\in\archplaces}\max\{\absv{s},\absv{t}\}^{\locdegv\deg F_i} \ll X^{\deg F_i},
\end{equation*}
thus for each index $i$
there exists $c_i>0$, independent of $X$,
such that whenever
$X>1$
and 
$(s,t)\in M^*(\c P_\ddd,X)$ 
then
$\norm F_i(s,t)^\flat<c_iX^{\deg F_i}$.
We let 
$
Y_i := c_iX^{\deg F_i}
$.
Suppressing the dependence on $\www$ in the notation, we define the arithmetic
functions
\begin{equation*}
r_{i}^{-}(s,t):=\sum_{\substack{\ccc_i\mid F_i(s,t)^\flat\\\norm\ccc_i< \sqrt{Y_i}}}\qr{G_i(s,t)}{\ccc_i}
\ 
\
\
\text{ and }
\
\ 
\
r_{i}^{+}(s,t):=
\hspace{-0,7cm}
\sum_{\substack{\ccc^\ast_i\mid F_i(s,t)^\flat\\\norm\ccc_i^\ast< \sqrt{Y_i}\\\norm c^{\ast}_i\sqrt{Y_i}\norm\www_i\leq\norm(F_i(s,t))}}
\hspace{-0,7cm}
\qr{G_i(s,t)}{\ccc^{\ast}_i},
\end{equation*}
an action which,
upon writing $F_i(s,t)^{\flat}=\ccc_i\ccc^\ast_i$
and using assumption~\eqref{eq:qr is one},
allows us to
obtain the validity of 
\begin{equation*}
\sum_{\substack{\ccc_i\mid F_i(s,t)^\flat}}\qr{G_i(s,t)}{\ccc_i}=r_{i}^{-}(s,t)+r_{i}^{+}(s,t)
.\end{equation*} 
Let us introduce
for every $\b{v} \in [0,\infty)^n$ and  $\boldsymbol{\psi}=\l(\psi_1,\ldots,\psi_n\r) \in \{0,1\}^n$ the region
\begin{equation}\label{eq:def dpsi}
\mathscr{D}_{\boldsymbol{\psi}}(X; \vv) := \bigcap_{i=1}^n \left\{
(s,t) \in X^{1/\dg}\mathcal{D}\where \norm(F_i(s,t))\geq \psi_iv_i\sqrt{Y_i}\norm\www_i\right\} \subseteq K_\infty^2.
\end{equation}
Here
$X$ is considered as fixed and the dependence on $\vv$ is what we are interested in. Define $\omega_{\vpsi}(X;\vv):\R^n\to\R$ 
through
\begin{equation}
\label{volume def}
\vv\mapsto \vol(\mathscr{D}_{\vpsi}(X;\vv)).
\end{equation} 
For $\vic=(\ccc_1,\ldots,\ccc_n)\in\ideals^n$ we use the abbreviation
 $\norm\vic := (\norm\ccc_1, \ldots, \norm\ccc_n)\in (0,\infty)^n$
and arrive at  the equality of the quantity in~\eqref{eq:complete93}
with
\[
\sum_{\substack{(s,t)\in M^*(\c P_\ddd,X)}} \prod_{i=1}^n\one_{f}(F_i(s,t)^\flat)(r_i(s,t)^{-}+r_i(s,t)^{+})
,\]
which can be reshaped into
\[ 
\sum_{\vpsi \in \{0,1\}^n}
\sum_{\substack{\vic\in \ideals^n\\\norm\ccc_i < \sqrt{Y_i}\ \forall i \\ \prod_{i=1}^n \ccc_i + \ddd\www=\OO_K 
\\ \ccc_i + \ccc_j=\OO_K\ \forall i\neq j}}  
\sum_{\substack{(s,t)\in M^*(\c P_\ddd, X) \\ (s,t)\in \mathscr{D}_{\vpsi}(X; \norm\vic)
\\
\ccc_i|F_i(s,t)\ \forall i}}
\
\prod_{i=1}^n\qr{G_i(s,t)}{\ccc_i}\one_{f}(F_i(s,t)^{\flat})
.\]
Here we added the coprimality condition $\prod_{i=1}^n \ccc_i + \ddd\www=\OO_K$ due to \eqref{eq:asympt thm divisor cond} and the assumptions $\ccc_i + \ccc_j =\OO_K$ for $i\neq j$ due to \eqref{eq:w leading coeffs}. The identity
\begin{equation*}
\one_{f}(F_i(s,t)^{\flat})=\sum_{\substack{\bbb_i|F_i(s,t)\\\bbb_i+\www=\OO_K}}f(\bbb_i)
\end{equation*}
reveals that, with
\begin{equation*}
S_{\vpsi}:=
\hspace{-0.4cm}
\sum_{\substack{\vib, \vic  \in  \ideals^n\\
\norm\bbb_i< Y_i, \norm\ccc_i < \sqrt{Y_i}\ \forall i\\\prod_{i=1}^n \bbb_i\ccc_i+\ddd\www=\OO_K 
 \\\ccc_i+\ccc_j = \bbb_i + \bbb_j= \bbb_i+\ccc_j = \OO_K \ \forall i\neq j 
}}
\hspace{-0,5cm}
\prod_{i=1}^n f(\bbb_i)\sum_{\substack{
(s,t)\in \mathscr{D}_{\vpsi}(X; \norm\vic)
\\
(s,t)\in M^*(\c P_\ddd, X) 
\\(\bbb_i\cap \ccc_i)\mid F_i(s,t)\ \forall i}}
\
\
\prod_{i=1}^n\qr{G_i(s,t)}{\ccc_i},
\end{equation*}
one has
\begin{equation}\label{eq:opleiding}
\sum_{\substack{(s,t)\in M^*(\c P_\ddd,X)}} r(\mathfrak{F},f,\c P;s,t)=\sum_{\boldsymbol{\psi} \in \{0,1\}^n}S_{\vpsi}.
\end{equation}
For any $\aaa \in \ideals$
we let $\langle \aaa \rangle \subset \ideals$ denote the monoid generated by the prime ideals dividing $\aaa$. We collect here some conditions on $n$-tuples $\via, \vib''', \vic'', \vic'''\in\ideals^n$ for later reference:
\begin{align}\label{eq:radical1d}
\forall i:\ &\aaa_i+\ddd\www=\OO_K \ \text{ and }\ \aaa_i + \prod_{j<i}\aaa_j = \OO_K,\\
\label{eq:radical1}
\forall i:\ &\norm\aaa_i\bbb'''_i < Y_i,\quad \bbb'''_i+\ddd\www \prod_{j=1}^n \aaa_j \ccc'''_j=\OO_K
\ \text{ and }\ \bbb'''_i + \prod_{j<i}\bbb'''_j=\OO_K,\\
\label{eq:radical1c}
\forall i:\ &\norm\aaa_i\ccc''_i\ccc'''_i < \sqrt{Y_i},\quad \ccc''_i\in\langle \aaa_i \rangle,\quad \ccc'''_i+\ddd\www\prod_{j=1}^n \aaa_j=\OO_K\ \text{ and }\ \ccc'''_i + \prod_{j<i}\ccc'''_j=\OO_K.
\end{align}
Recall the
definition of  $\Lambda^*(\aaa,(\sigma,\tau),\ddd,\gamma)$ in \eqref{eq:def lattice}.
\begin{lemma}\label{lem:rec}
Write $\ddd_i:= \aaa_i \bbb'''_i\ccc''_i\ccc'''_i$,
$\ddd':=\prod_{i=1}^n \ddd_i$
and
let 
$\lambda$ be the, unique modulo $\ddd'$, 
solution of the system $\lambda \equiv \lambda_i \bmod \ddd_i$ for all $i$. 
Then the sum 
$S_{\vpsi}$ equals
\begin{align*}
\sum_{\substack{\via,\vib''',\vic'',\vic'''\in \ideals^n \\\eqref{eq:radical1d},\eqref{eq:radical1},\eqref{eq:radical1c}}}
\left(\prod_{i=1}^n f(\aaa_i \bbb'''_i)\right)
&\sum_{\substack{\lambda_i \bmod{\ddd_i}\ \forall i
\\ 
\ddd_i \mid F_i(\lambda_i,1)}}
\hspace{0.2cm}
\left(\prod_{i=1}^n\qr{G_i(\lambda_i,1)}{\aaa_i\ccc''_i\ccc'''_i}\right)
\cdot
\\ 
\cdot
&\left|\Lambda^*(\ddd\www,(\sigma,\tau),\ddd',\lambda) \cap\mathscr{D}_{\vpsi}(X;(\norm\aaa_i \ccc''_i \ccc'''_i)_{i=1}^n) \right|.
\end{align*}
\end{lemma}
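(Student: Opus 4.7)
The plan is to reorganize the sum $S_{\vpsi}$ in three stages: (i) replace the summation variables $(\vib,\vic)$ by a canonical decomposition $(\via,\vib''',\vic'',\vic''')$ that tracks the primes shared between each $\bbb_i$ and $\ccc_i$; (ii) parametrize the divisibility condition $(\bbb_i\cap\ccc_i)\mid F_i(s,t)$ via a residue $\lambda_i\bmod\ddd_i$ and observe that $\qr{G_i(s,t)}{\ccc_i}$ depends only on $\lambda_i$; and (iii) glue the $\lambda_i$ via CRT, recognising the resulting conditions on $(s,t)$ as precisely those defining the lattice $\Lambda^*(\ddd\www,(\sigma,\tau),\ddd',\lambda)$.

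For step (i), observe first that since $f\in\c Z_K$ vanishes on non-squarefree ideals, the factor $\prod_i f(\bbb_i)$ restricts the sum to squarefree $\bbb_i$. Given such a pair $(\bbb_i,\ccc_i)$, let $\aaa_i$ be the squarefree ideal supported on the primes dividing both $\bbb_i$ and $\ccc_i$. Define $\bbb'''_i:=\bbb_i\aaa_i^{-1}$, which is coprime to $\ccc_i$ and hence to $\aaa_i$, and factor $\ccc_i\aaa_i^{-1}=\ccc''_i\ccc'''_i$ with $\ccc''_i\in\langle\aaa_i\rangle$ and $\ccc'''_i$ coprime to $\aaa_i$. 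Squarefreeness of $\bbb_i$ is crucial here: at every shared prime $\ppp$ it forces $v_\ppp(\bbb_i)=1\leq v_\ppp(\ccc_i)$, so $\ccc_i\aaa_i^{-1}$ is a genuine ideal. The map $(\bbb_i,\ccc_i)\mapsto(\aaa_i,\bbb'''_i,\ccc''_i,\ccc'''_i)$ is a bijection, and a routine case analysis shows that the coprimality hypotheses on $\vib,\vic$ in the definition of $S_{\vpsi}$ translate exactly to \eqref{eq:radical1d}--\eqref{eq:radical1c}. A prime-by-prime computation yields $\bbb_i\cap\ccc_i=\aaa_i\bbb'''_i\ccc''_i\ccc'''_i=\ddd_i$, and the $\ddd_i$ are pairwise coprime and coprime to $\ddd\www$.

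For step (ii), hypothesis \eqref{eq:w leading coeffs}, together with $\classrep\mid\www$ and $s\OO_K+t\OO_K=\classrep$, implies that $t$ is a unit modulo $\ddd_i$: a common prime $\ppp$ of $t$ and $\ddd_i$ would, via $F_i(s,t)\equiv F_i(1,0)s^{\deg F_i}\bmod t$, force $\ppp\mid s$ and hence $\ppp\mid\classrep\mid\www$, contradicting $\ddd_i+\www=\OO_K$. Consequently, $\ddd_i\mid F_i(s,t)$ is equivalent to $s\equiv\lambda_i t\bmod\ddd_i$ for some root $\lambda_i$ of $F_i(X,1)$ modulo $\ddd_i$. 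Using $\ccc_i\mid\ddd_i$ and the evenness of $\deg G_i$, we obtain
\begin{equation*}
\qr{G_i(s,t)}{\ccc_i}=\qr{t^{\deg G_i}G_i(\lambda_i,1)}{\aaa_i\ccc''_i\ccc'''_i}=\qr{G_i(\lambda_i,1)}{\aaa_i\ccc''_i\ccc'''_i},
\end{equation*}
a quantity depending only on $\lambda_i$, which therefore factors out of the innermost sum. The factor $f(\bbb_i)=f(\aaa_i\bbb'''_i)$ follows from multiplicativity of $f$.

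For step (iii), pairwise coprimality of the $\ddd_i$'s allows the Chinese remainder theorem to produce a unique $\lambda\bmod\ddd':=\prod_i\ddd_i$ satisfying $\lambda\equiv\lambda_i\bmod\ddd_i$ for every $i$, merging the $n$ congruences $s\equiv\lambda_i t\bmod\ddd_i$ into a single congruence $s\equiv\lambda t\bmod\ddd'$. Combined with $(s,t)\in\classrep^2$, $(s,t)\equiv(\sigma,\tau)\bmod\ddd\www$ and $s\OO_K+t\OO_K=\classrep$ from $M^*(\c P_\ddd,X)$, this reproduces exactly the defining conditions \eqref{eq:def lattice} of $\Lambda^*(\ddd\www,(\sigma,\tau),\ddd',\lambda)$; intersecting with $\mathscr D_{\vpsi}(X;(\norm\aaa_i\ccc''_i\ccc'''_i)_{i=1}^n)$ completes the identification with the right-hand side of the lemma. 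The main obstacle is the combinatorial bookkeeping of step (i), where one must match the rich system of coprimality conditions between the two parametrizations; once that correspondence is settled, steps (ii) and (iii) are routine.
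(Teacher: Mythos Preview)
Your proposal is correct and follows essentially the same approach as the paper's proof: the decomposition $(\bbb_i,\ccc_i)\mapsto(\aaa_i,\bbb'''_i,\ccc''_i,\ccc'''_i)$ via the gcd $\aaa_i=\bbb_i+\ccc_i$ and the $\langle\aaa_i\rangle$-part of $\ccc_i\aaa_i^{-1}$, the invertibility of $t$ modulo $\ddd_i$ from $\ppp\nmid F_i(1,0)$ and $s\OO_K+t\OO_K=\classrep$, the even-degree Jacobi-symbol reduction, and the final CRT step all match the paper. The only cosmetic difference is that you invoke squarefreeness of $\bbb_i$ at the outset to define $\aaa_i$ directly, whereas the paper first writes $\bbb'_i=\bbb''_i\bbb'''_i$ and then observes $\bbb''_i=\OO_K$; the resulting decompositions coincide.
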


\begin{proof}
For each
pair of ideals $\bbb_i,\ccc_i$ in the definition of $S_{\vpsi}$
we let $\aaa_i:= \bbb_i+\ccc_i$.
Therefore
$\bbb_i=\aaa_i \bbb'_i$ and $\ccc_i=\aaa_i \ccc'_i$ for some coprime ideals $\bbb'_i,\ccc'_i$ 
which satisfy $\bbb_i\cap \ccc_i=\aaa_i\bbb'_i\ccc'_i$.
We may further decompose $\bbb'_i$ and $\ccc'_i$ uniquely as
$\bbb'_i=\bbb''_i\bbb'''_i$ and $\ccc'_i=\ccc''_i\ccc'''_i$,
where 
$\bbb''_i,\bbb'''_i,\ccc''_i,\ccc'''_i \in \ideals$
and for all non-zero prime ideals $\ppp$ we have
\begin{equation*}
\ppp|\bbb''_i\ccc''_i \Rightarrow \ppp|\aaa_i\ \ \text{ and }\ \ \ppp|\bbb'''_i\ccc'''_i \Rightarrow \ppp\nmid \aaa_i.
\end{equation*}
Since the function $f$ is supported on square-free ideals, the only relevant value for $\bbb''_i$ in 
$S_{\vpsi}$ is $\bbb''_i=\OO_K$. 
Taking into account
the conditions \eqref{eq:radical1d}, \eqref{eq:radical1} and \eqref{eq:radical1c}
we have thus  
obtained the following factorization for the $\bbb_i,\ccc_i$ in the sum $S_{\vpsi}$,
\begin{equation*}
\bbb_i= \aaa_i\bbb'''_i \ \text{ and }\ \ccc_i=\aaa_i \ccc''_i \ccc'''_i
.\end{equation*}
We are therefore led to the equality
of
$S_{\vpsi}$ with
\begin{equation*}
\sum_{\substack{\via, \vib''',\vic'',\vic''' \in  \ideals^n \\\eqref{eq:radical1d},\eqref{eq:radical1},\eqref{eq:radical1c}}} \left(\prod_{i=1}^n f(\aaa_i \bbb'''_i)\right)
\sum_{\substack{
(s,t)\in  M^*(\c P_\ddd,X)
\\
(s,t) \in \mathscr{D}_{\vpsi}(X;(\norm \aaa_i \ccc''_i \ccc'''_i)_{i=1}^n)
\\ \aaa_i\bbb'''_i\ccc''_i\ccc'''_i|F_i(s,t)\ \forall i}}\prod_{i=1}^n \qr{G_i(s,t)}{\aaa_i\ccc''_i\ccc'''_i}.
\end{equation*}
For any pair $(s,t)$ in the inner sum we have
 $t\OO_K + \ddd_i = \OO_K$,
since if $\ppp\mid t\OO_K + \ddd_i$ then $\ppp\nmid\www$
and hence $\ppp\nmid F_i(1,0)$. This implies that $\ppp\mid s$ and thus $\ppp \mid s\OO_K + t\OO_K = \classrep \mid \www$, a contradiction. Hence, letting $\lambda_i:=st^{-1}\bmod{\ddd_i}$ we obtain the congruence 
$s\equiv \lambda_i t \bmod{\ddd_i}$. 
Note that
each
$G_i$ has even degree
and therefore  
\begin{equation*}
\left(
\frac{G_i(s,t)}{\aaa_i\ccc''_i\ccc'''_i}
\right)=\left(
\frac{G_i(\lambda_i,1)}{\aaa_i\ccc''_i\ccc'''_i}
\right),
\end{equation*}
an equality which can be exploited to 
transform
$S_{\vpsi}$
into
\begin{equation*}
\sum_{\substack{\via,\vib''',\vic'',\vic''' \in  \ideals^n \\\eqref{eq:radical1d},\eqref{eq:radical1},\eqref{eq:radical1c}}} \left(\prod_{i=1}^n f(\aaa_i \bbb'''_i)\right)\sum_{\substack{\lambda_i \bmod{\ddd_i}\ \forall i\\\ddd_i \mid F_i(\lambda_i,1)}}\vspace{0.2cm}\prod_{i=1}^n\qr{G_i(\lambda_i,1)}{\aaa_i\ccc''_i\ccc'''_i}
\sum_{\substack{
(s,t)\in M^*(\c P_\ddd, X)
\\ 
(s,t)\in  \mathscr{D}_{\vpsi}(X;(\norm \aaa_i \ccc''_i \ccc'''_i)_{i=1}^n)
\\s\equiv \lambda_i t \bmod{\ddd_i}\ \forall i}}1
.\end{equation*} 
Since the $\ddd_i$ are relatively prime in pairs, we may combine the congruences under the innermost sum to a single congruence of the form $s\equiv \lambda t \bmod{\ddd'}$
and our lemma is furnished
upon tautologically reformulating 
the innermost sum.
\end{proof}

\subsection{Application of lattice point counting}
\label{s:minimal application}
Let us 
define the multiplicative function 
on $\ideals$,
\begin{equation*}
\upeta(\aaa):=\frac{\mu_K(\aaa)}{\norm\aaa}\prod_{\ppp|\aaa}\left(1+\frac{1}{\norm\ppp}\right)^{-1},
\end{equation*}
which 
is supported on square-free
ideals and satisfies $|\upeta(\ppp)| < 1/\norm\ppp$ for all prime ideals $\ppp$. We use the symbols $\ddd_i,\ddd',\lambda$ with the same meaning as in Lemma \ref{lem:rec}. For any $\vpsi \in \{0,1\}^n$, let

\begin{equation*}
M_{\psi}:=\sum_{\substack{\via,\vib''',\vic'',\vic''' \in  \ideals^n \\\eqref{eq:radical1d},\eqref{eq:radical1},\eqref{eq:radical1c}}}\hspace{-0,5cm}\omega_{\vpsi}(X;(\norm\aaa_i \ccc''_i \ccc'''_i)_{i=1}^n) \prod_{i=1}^n \left(\frac{f(\aaa_i \bbb'''_i) \one_{\upeta}(\aaa_i\bbb'''_i\ccc'''_i)}{\norm\aaa_i\bbb'''_i\ccc''_i\ccc'''_i}\sum_{\substack{\lambda_i \bmod{\ddd_i}\\\ddd_i |F_i(\lambda_i,1)}}\qr{G_i(\lambda_i,1)}{\aaa_i\ccc''_i\ccc'''_i}\right).
\end{equation*}

\begin{lemma}
\label{cramped}
Let $Y:=\prod_{i=1}^n Y_i$. Then, for all $\epsilon>0$, we have
\begin{equation*}
\sum_{\substack{(s,t)\in  M^*(\c P_\ddd, X)}} 
\hspace{-0,3cm}
r(\mathfrak{F},f,\c P; s,t)=
\frac{c_K}{\norm(\ddd\www)^2}\prod_{\ppp|\ddd\www\classrep^{-1}}
\hspace{-0,2cm}
\left(1-\frac{1}{\norm\ppp^2}\right)^{-1}
\hspace{-0,3cm}
\sum_{\vpsi \in \{0,1\}^n}M_{\vpsi} + O_{\epsilon}(X^{2-1/(4m)+\epsilon}).
\end{equation*}
Here, $c_K$ is a positive constant depending only on $K$
and the implied constant in the error term depends only on $K,\classrep,\c D,\www, \mathfrak{F},f,\epsilon$.
\end{lemma}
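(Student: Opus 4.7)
The plan is to reduce to lattice point counting via Lemma \ref{lem:rec}, apply Lemma \ref{lem:counting with coprime} to each innermost count, identify the main term with $M_{\vpsi}$, and then estimate the resulting error via a refined successive-minima argument. By \eqref{eq:opleiding} it suffices to analyse each $S_{\vpsi}$ separately. Lemma \ref{lem:rec} recasts $S_{\vpsi}$ as a multi-sum whose innermost quantity is the lattice count $\card\bigl(\Lambda^*(\ddd\www,(\sigma,\tau),\ddd',\lambda)\cap \mathscr{D}_{\vpsi}(X;(\norm\aaa_i\ccc''_i\ccc'''_i)_i)\bigr)$, where $\ddd':=\prod_i\aaa_i\bbb'''_i\ccc''_i\ccc'''_i$. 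Conditions \eqref{eq:radical1d}--\eqref{eq:radical1c}, together with $\ccc''_i\in\langle\aaa_i\rangle$, ensure that $\aaa_i,\bbb'''_i,\ccc''_i,\ccc'''_i$ are all coprime to $\ddd\www$, so that $\ddd\www+\ddd'=\OO_K$. This places me exactly in the setting of Lemma \ref{lem:counting with coprime} with $\aaa=\ddd\www$ and $\ddd=\ddd'$, and since $\mathscr{D}_{\vpsi}(X;\cdot)\subseteq X^{1/m}\c D$ is zero-centered and semialgebraically defined, the definable-family framework applies.

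Applying Lemma \ref{lem:counting with coprime}, the main term that emerges is
\[
\frac{c_K\,\vol\mathscr{D}_{\vpsi}(X;(\norm\aaa_i\ccc''_i\ccc'''_i)_i)}{\zeta_K(2)\norm(\ddd')\,\norm(\ddd\www)^2}
\prod_{\ppp\mid\ddd\www\classrep^{-1}}\!\left(1-\frac{1}{\norm\ppp^2}\right)^{\!-1}
\prod_{\ppp\mid\ddd'}\!\left(1+\frac{1}{\norm\ppp}\right)^{\!-1}.
\]
The volume is exactly $\omega_{\vpsi}(X;(\norm\aaa_i\ccc''_i\ccc'''_i)_i)$. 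A direct computation gives $\one_{\upeta}(\ppp)=(1+\norm\ppp^{-1})^{-1}$ for every prime ideal $\ppp$, whence $\prod_{\ppp\mid\ddd'}(1+\norm\ppp^{-1})^{-1}=\one_{\upeta}(\ddd')$. The pairwise coprimality of the ideals $\aaa_i\bbb'''_i\ccc'''_i$ across distinct $i$, combined with the fact that $\ccc''_i\in\langle\aaa_i\rangle$ and the multiplicativity and square-free support of $\one_{\upeta}$, yield $\one_{\upeta}(\ddd')=\prod_i\one_{\upeta}(\aaa_i\bbb'''_i\ccc'''_i)$ and $\norm\ddd'=\prod_i\norm(\aaa_i\bbb'''_i\ccc''_i\ccc'''_i)$. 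Folding $\zeta_K(2)$ into the constant $c_K$ and summing the main term over $\via,\vib''',\vic'',\vic''',\lambda_1,\dots,\lambda_n$ then reassembles exactly $\sum_{\vpsi}M_{\vpsi}$ together with the stated prefactor.

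The main obstacle will be controlling the cumulative error. Lemma \ref{lem:counting with coprime} furnishes, for each admissible configuration, a pointwise error of the shape
\[
\sum_{j=0}^{m-1}\frac{X^{1+j/m}(\log X)\,\tau_K(\ddd')}{\min_{1\le q\le h}\bigl\{\lambda^{(1)}(\classrep_q,\classrep_q\ddd',\lambda)^m\,\lambda^{(m+1)}(\classrep_q,\classrep_q\ddd',\lambda)^j\bigr\}},
\]
and the task is to show that its sum over $\via,\vib''',\vic'',\vic'''$ (subject to \eqref{eq:radical1d}--\eqref{eq:radical1c}) and over residues $\lambda_i\bmod\ddd_i$ with $\ddd_i\mid F_i(\lambda_i,1)$ totals $O(X^{2-1/(4m)+\epsilon})$. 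I plan to adapt the refinement of Daniel's technique announced in the introduction: instead of leveraging only the first minimum, I would dyadically decompose the joint size range of $\lambda^{(1)}$ and $\lambda^{(m+1)}$, and in each dyadic bracket bound the number of quadruples whose associated lattice exhibits minima in that bracket. Upper bounds on the minima come from Minkowski's second theorem together with Lemma \ref{lem:lattice}(3), the lower bound $\lambda^{(1)}\gg 1$ handles the thin tail, and the constraints $\norm\aaa_i\bbb'''_i<Y_i$, $\norm\aaa_i\ccc''_i\ccc'''_i<\sqrt{Y_i}$ with $Y=\prod_iY_i\le X^3$ (from $c(\mathfrak F)\le 3$) limit the summation range. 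Standard divisor and prime-ideal estimates absorb $\tau_K(\ddd')$ and $\log X$ into $X^{\epsilon}$, and the balancing of the contributions of $\lambda^{(1)}$ and $\lambda^{(m+1)}$ is what yields the saving $X^{-1/(4m)+\epsilon}$ over the trivial bound of size $X^2$.
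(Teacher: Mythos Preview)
Your reduction to Lemma~\ref{lem:rec} and the application of Lemma~\ref{lem:counting with coprime} are correct, and your identification of the main term with $M_{\vpsi}$ (including the computation $\one_{\upeta}(\ppp)=(1+\norm\ppp^{-1})^{-1}$ and the splitting of $\one_{\upeta}(\ddd')$ across the factors) matches the paper exactly.

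The error analysis, however, is where your proposal falls short of a proof. A dyadic decomposition of $(\lambda^{(1)},\lambda^{(m+1)})$ does not by itself control the number of configurations $(\via,\vib''',\vic'',\vic''',\lambda)$ whose lattice has minima in a given bracket; you need a mechanism linking the lattice back to the ideals. The paper supplies two ideas you are missing. First, one drops the $\vib'''$-dependence by passing to the coarser lattice with modulus $\ddd'':=\prod_i\aaa_i\ccc''_i\ccc'''_i$; since $\Lambda(\classrep_q,\classrep_q\ddd',\lambda)\subset\Lambda(\classrep_q,\classrep_q\ddd'',\lambda')$, the minima only shrink, and the sum over $\vib'''$ with weight $|f(\bbb'''_i)|\ll\norm\bbb_i'''^{-1+\epsilon}$ is then harmless. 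Second, rather than decomposing dyadically, one \emph{reverses the order of summation}: one picks the actual vector $\vv\in\OO_K^2$ attaining $\lambda^{(1)}$, notes that $\|\vv\|\ll Y^{1/(4m)}$ by Lemma~\ref{lem:lattice}(3), and observes that membership of $\vv$ in $\Lambda(\classrep_q,\classrep_q\ddd'',\lambda')$ forces $\aaa_i\ccc''_i\ccc'''_i\mid F_i(\vv)$ for each $i$. When $F_i(\vv)\neq 0$, the number of admissible $(\aaa_i,\ccc''_i,\ccc'''_i,\lambda_i)$ is therefore $\ll\norm F_i(\vv)^{\epsilon}\ll X^{\epsilon}$, and summing $\|\vv\|^{-m-j}$ over $\vv$ with $\|\vv\|\ll Y^{1/(4m)}$ and $Y\ll X^{3}$ gives the saving $X^{-1/(4m)}$.

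There is also a degenerate case you do not address: if $F_k(\vv)=0$ for some $k$ (necessarily $\deg F_k=1$), the divisor bound fails for index $k$, and one must instead estimate $\lambda^{(m+1)}$ explicitly for the lattice attached to a linear form, exhibiting $2m$ independent lattice points of controlled norm. Without these two ingredients your outlined argument cannot close.
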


\begin{proof}
Recall that $\mathcal{C} = \{\classrep_1, \ldots, \classrep_h\}$ is a fixed system of integral representatives of the class group of $K$. By possibly modifying $\www_0$, we may assume that $\classrep_1\cdots \classrep_h \mid \www$.

Since $\c{D}\subseteq K_\infty^2=\R^{2\dg}$ is a cartesian product of balls in $K_v^2 = \R^{2\locdegv}$, it is clear that the sets $\c D_\vpsi(X;\b v)\subseteq \R^{2\dg}$, for $X>0$ and $\b v\in \R^n$ are fibres of a definable family with parameters $(X,\b v,\vpsi)\in\R^{1+2n}$ in the o-minimal structure $\R_{\text{alg}}$ of semialgebraic sets. Moreover, $\c D_\vpsi(X;\b v)\subseteq X^{1/m}\c D$, which is contained in a zero-centered ball of radius $\ll X^{1/m}$.

Injecting the estimate of Lemma \ref{lem:counting with coprime} into Lemma \ref{lem:rec} yields the desired main term.
The sum over the error terms in Lemma \ref{lem:counting with coprime} can be bounded by $\ll E_0 + \cdots + E_{m-1}$, where, for $0\leq j\leq m-1$,
\begin{align*}
  E_j := \sum_{\substack{\via,\vib''',\vic'',\vic''' \in  \ideals^n \\ \norm \aaa_i \bbb'''_i \leq Y_i \\ \norm \aaa_i \ccc''_i \ccc'''_i \leq \sqrt{Y_i}\\\aaa_i\bbb_i'''\ccc_i''\ccc_i'''+\www=\OO_K\\ \bbb_i'''+\aaa_i\ccc_i''\ccc_i'''=\OO_K}}\prod_{i=1}^n \frac{1}{\norm\aaa_i \bbb'''_i} \sum_{\substack{\lambda_i \bmod{\ddd_i}\ \forall i\\\ddd_i \mid F_i(\lambda_i,1)}}\frac{X^{1+j/m+\epsilon}}{\min_{1\leq q\leq h}\{\lambda^{(1)}(\classrep_q,\classrep_q\ddd',\lambda)^m\lambda^{(m+1)}(\classrep_q,\classrep_q\ddd',\lambda)^j\}}.
\end{align*}

Let us bound $E_j$. The Chinese remainder theorem allows us to separate the sum over $\lambda_i\bmod\ddd_i$ into a sum over $\lambda_i\bmod\aaa_i\ccc_i''\ccc_i'''$ and a sum over $\lambda_i\bmod \bbb_i'''$.
Write $\ddd'':=\prod_{i=1}^n\aaa_i\ccc_i''\ccc_i'''$
and let $\lambda'\equiv \lambda_i\bmod\aaa_i\ccc_i''\ccc_i'''$ for all $i$. Since $\Lambda(\classrep_q,\classrep_q\ddd',\lambda)\subset \Lambda(\classrep_q,\classrep_q\ddd'',\lambda')$, we obtain
\begin{equation*}
  \lambda^{(i)}(\classrep_q,\classrep_q\ddd',\lambda)\geq \lambda^{(i)}(\classrep_q,\classrep_q\ddd'',\lambda')
\end{equation*}
for all $1\leq i\leq 2m$. This allows us to sum over $\vib'''$, obtaining the estimate
\begin{equation}\label{eq:ej bound}
  E_j \ll\hspace{-0.6cm} \sum_{\substack{\via,\vic'',\vic''' \in  \ideals^n \\\norm \aaa_i \ccc''_i \ccc'''_i \leq \sqrt{Y_i}\\\aaa_i\ccc_i''\ccc_i'''+\www=\OO_K}}\prod_{i=1}^n \frac{1}{\norm\aaa_i} \sum_{\substack{\lambda_i \bmod{\aaa_i\ccc_i''\ccc_i'''}\ \forall i\\\aaa_i\ccc_i\ccc_i''' \mid F_i(\lambda_i,1)}}\frac{X^{1+j/m+\epsilon}}{\min_{1\leq q\leq h}\{\lambda^{(1)}(\classrep_q,\classrep_q\ddd'',\lambda')^m\lambda^{(m+1)}(\classrep_q,\classrep_q\ddd'',\lambda')^j\}}.
\end{equation}
Each first successive minimum $\lambda^{(1)}(\classrep_q,\classrep_q\ddd'',\lambda')$ is attained by a point $\vv = (v_1,v_2)$ in the lattice $\Lambda(\classrep_q,\classrep_q\ddd'', \lambda')\subseteq \OO_K^2\subset K_\infty^2$, of euclidean norm bounded by
\begin{equation*}
  \vecnorm{\vv}\ll \norm\ddd''^{1/(2m)} \ll Y^{1/(4m)}, 
\end{equation*}
due to Lemma \ref{lem:lattice}. Let
\begin{equation*}
  E_j(\vv):=\sum_{q=1}^{h}\sum_{\substack{\via,\vic'',\vic''' \in  \ideals^n \\ \norm \aaa_i \ccc''_i \ccc'''_i \leq \sqrt{Y_i}\\\aaa_i\ccc_i''\ccc_i'''+\www=\OO_K}}\prod_{i=1}^n \frac{1}{\norm\aaa_i} \sum_{\substack{\lambda_i \bmod{\aaa_i\ccc_i''\ccc_i'''}\ \forall i\\\aaa_i\ccc_i''\ccc_i''' \mid F_i(\lambda_i,1)\\ \vv \in \Lambda(\classrep_q,\classrep_q\ddd'',\lambda')\\\vecnorm{\vv}=\lambda^{(1)}(\classrep_q,\classrep_q\ddd'',\lambda')}}\frac{1}{\vecnorm{\vv}^m\lambda^{(m+1)}(\classrep_q,\classrep_q\ddd'',\lambda')^{j}}.
\end{equation*}
Sorting the expression in \eqref{eq:ej bound} by the first successive minimum, we see that
\begin{equation*}
  E_j \ll \sum_{\substack{\vv\in\OO_K^2\smallsetminus\{0\}\\\vecnorm{\vv}\ll Y^{1/(4m)}}}X^{1+j/m+\epsilon}E_j(\vv).
\end{equation*}
For $\vv\in\OO_K^2$ to be an element of the lattice $\Lambda(\classrep_q,\classrep_q\ddd'', \lambda')$, it is necessary that $v_1 \equiv \lambda' v_2 \bmod \ddd''$, so in particular $v_1 \equiv \lambda_i v_2 \bmod \aaa_i\ccc_i''\ccc_i'''$
and hence $\aaa_i\ccc_i''\ccc_i''' \mid F_i(\vv)$. This allows us to conclude that
\begin{equation*}
  E_j(\vv) \ll \frac{\prod_{i=1}^n\norm(F_i(\vv))^{\epsilon}}{\vecnorm{\vv}^{m+j}} \ll \frac{X^{\epsilon}}{\vecnorm{\vv}^{m+j}},
\end{equation*}
whenever $F_i(\vv)\neq 0$ holds for all $1\leq i\leq n$. The sum of $E_j(\vv)$ over all such $\vv$ is 
\begin{equation*}
  \ll X^{1+j/m+\epsilon}\sum_{\substack{\vv\in\OO_K^2\smallsetminus\{0\}\\\vecnorm{\vv}\ll Y^{1/(4m)}}}\frac{1}{\vecnorm{\vv}^{m+j}} \ll X^{1+j/m+\epsilon} Y^{1/2(1-(m+j)/(2m))} \ll  X^{1+j/m+\epsilon}Y^{1/4-j/(4m)}.
\end{equation*}
Recalling our assumption that $c(\mathfrak{F})\leq 3$ and the fact that $Y \ll X^{c(\mathfrak{F})}$, we see that this error term does not exceed
\begin{equation*}
  X^{2-1/4 + j/(4m)+\epsilon} \leq X^{2 - 1/(4m) + \epsilon}.
\end{equation*}
It remains to bound the sum over those $\vv$ for which $F_k(\vv)=0$ for some $1\leq k\leq n$. Since $F_k(s,t)$ is irreducible, this necessarily implies that $F_k(s,t)$ is linear
and since the forms $F_i(s,t)$ are pairwise coprime, we conclude that $F_i(\vv)\neq 0$ for all $i\neq k$. This allows us to bound the number of $\aaa_i,\ccc_i'',\ccc_i''',\lambda_i$, for $i\neq k$, as before by $\prod_{i\neq k}\norm(F_i(\vv))^{\epsilon}\ll X^\epsilon$. Writing temporarily
\begin{equation*}
  F_k(s,t) = a s - b t,
\end{equation*}
with $a\neq 0$ and $a\mid\www_0\mid\www$, we see that the
equality
$F_k(\lambda_k,1)\equiv 0\bmod \aaa_k\ccc_k''\ccc_k'''$ 
is equivalent to
$\lambda_k = a^{-1}b \bmod \aaa_k\ccc_k''\ccc_k'''$.
Moreover, $\Lambda(\classrep_q,\classrep_q\ddd'',\lambda') \subseteq \Lambda(\classrep_q,\classrep_q\aaa_k\ccc_k''\ccc_k''',\lambda_k)$. 
We may thus bound
\begin{equation*} E_j(\vv)\ll\sum_{q=1}^h\sum_{\substack{\aaa_k,\ccc_k'',\ccc_k'''\in\ideals\\\norm\aaa_k\ccc_k''\ccc_k'''\ll \sqrt{X}\\\aaa_k\ccc_k''\ccc_k'''+\www=\OO_K}} \frac{X^\epsilon}{\vecnorm{\vv}^m\lambda^{(m+1)}(\classrep_q,\classrep_q\aaa_k\ccc_k''\ccc_k''',\lambda_k)^j}.
\end{equation*}
Let $\alpha_1, \ldots, \alpha_m$ be $\Z$-linearly independent elements of $\classrep_q$ with $\vecnorm{\alpha_i}\asymp\lambda^{(i)}(\classrep_q) \asymp 1$ and let $\beta_1, \ldots, \beta_m$ be $\Z$-linearly independent in $\classrep_q\aaa_k\ccc_k''\ccc_k'''$ with $\vecnorm{\beta_i}\asymp\lambda^{(i)}(\classrep_q\aaa_k\ccc_k''\ccc_k''')\asymp \norm(\aaa_k\ccc_k''\ccc_k''')^{1/m}$. To estimate the successive minima, we used Minkowski's second theorem and the fact that  $\lambda^{(1)}(\aaa)\gg\norm\aaa^{1/m}$ holds for any $\aaa\in\ideals$ (see, e.g. \cite[Lemma 5]{MR2247898} or \cite[Lemma 5.1]{ANT}). This provides us with the linearly independent lattice points
\begin{equation*}
  \begin{pmatrix}
    b\alpha_1\\
    a\alpha_1
  \end{pmatrix}, \ldots,
  \begin{pmatrix}
    b\alpha_m\\
    a\alpha_m
  \end{pmatrix}, 
  \begin{pmatrix}
    \beta_1\\
    1
  \end{pmatrix}, \ldots,
  \begin{pmatrix}
    \beta_m\\
    1
  \end{pmatrix} \in \Lambda(\classrep_q,\classrep_q\aaa_k\ccc_k''\ccc_k''',\lambda_k).
\end{equation*}
The first $m$ of these have norm $\asymp 1$, whereas the latter $m$ ones have norm $\asymp \norm(\aaa_k\ccc_k''\ccc_k''')^{1/m}$, so the product of their norms is $\asymp\norm(\aaa_k\ccc_k''\ccc_k''')\asymp\det\Lambda(\classrep_q,\classrep_q\aaa_k\ccc_k''\ccc_k''',\lambda_k)$. Using again Minkowski's second theorem, this shows that the successive minima of $\Lambda(\classrep_q,\classrep_q\aaa_k\ccc_k''\ccc_k''',\lambda_k)$ satisfy 
\begin{align*}
\lambda^{(1)}(\classrep_q,\classrep_q\aaa_k\ccc_k''\ccc_k''',\lambda_k), \ldots, \lambda^{(m)}(\classrep_q,\classrep_q\aaa_k\ccc_k''\ccc_k''',\lambda_k)&\asymp 1,\\ \lambda^{(m+1)}(\classrep_q,\classrep_q\aaa_k\ccc_k''\ccc_k''',\lambda_k), \ldots, \lambda^{(2m)}(\classrep_q,\classrep_q\aaa_k\ccc_k''\ccc_k''',\lambda_k)&\asymp \norm(\aaa_k\ccc_k''\ccc_k''')^{1/m}.
\end{align*}
As a result, we obtain the bound
\begin{equation*} E_j(\vv)\ll\sum_{\substack{\aaa_k,\ccc_k'',\ccc_k'''\in\ideals\\\norm\aaa_k\ccc_k''\ccc_k'''\ll \sqrt{X}}} \frac{X^\epsilon}{\vecnorm{\vv}^m\norm(\aaa_k\ccc_k''\ccc_k''')^{j/m}}.
\end{equation*}
In addition, we observe that any $\vv=(v_1,v_2)\in\OO_K^2$ with $F_k(\vv)=0$ is uniquely determined by $v_2$.
Consequently, 
\begin{align*}
\sum_{\substack{\vv\in\OO_K^2\smallsetminus\{0\}\\\vecnorm{\vv}\ll Y^{1/(4m)}\\F_k(\vv)=0}}&X^{1+j/m+\epsilon}E_j(\vv)\ll X^{1+j/m+\epsilon}\sum_{\substack{v_2\in\OO_K\smallsetminus\{0\}\\\vecnorm{v_2}\ll Y^{1/(4m)}}}\frac{1}{\vecnorm{v_2}^m}\sum_{\substack{\aaa_k,\ccc_k'',\ccc_k'''\in\ideals\\\norm\aaa_k\ccc_k''\ccc_k'''\ll \sqrt{X}}} \frac{1}{\norm(\aaa_k\ccc_k''\ccc_k''')^{j/m}}\\
&\ll X^{1+j/m+\epsilon}(\log Y)^m X^{1/2(1-j/m)+\epsilon}\ll X^{3/2+j/(2m)+\epsilon}\ll X^{2-1/(2m)+\epsilon}.
\end{align*}
\end{proof}

\subsection{Controlling
the main term}
\label{s:simplifi}

Let $\rho_i(\aaa):=\rho_{(F_i,G_i)}(\aaa)$,
as defined prior to Lemma \ref{lem:artinakos}
and moreover 
recall~\eqref{eq:tau F defin}. 
\begin{lemma}
\label{lem:decompo}
The arithmetic factor in the definition of $M_{\vpsi}$ decomposes as follows:
\begin{equation*}
\sum_{\substack{\lambda_i \bmod{\ddd_i}\\\ddd_i |F_i(\lambda_i,1)}}\qr{G_i(\lambda_i,1)}{\aaa_i\ccc''_i\ccc'''_i}=\rho_i(\aaa_i\ccc''_i)\uptau_{F_i}(\bbb'''_i)\rho_i(\ccc'''_i).
\end{equation*}
\end{lemma}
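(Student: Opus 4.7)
The plan is to factor the sum using the Chinese Remainder Theorem after verifying the relevant coprimalities, then match each factor with its definition.

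First I would extract from conditions~\eqref{eq:radical1d}, \eqref{eq:radical1}, \eqref{eq:radical1c} that the three ideals $\aaa_i\ccc''_i$, $\bbb'''_i$ and $\ccc'''_i$ are pairwise coprime. Indeed, $\ccc''_i\in\langle\aaa_i\rangle$ by~\eqref{eq:radical1c}, so $\aaa_i\ccc''_i$ and $\aaa_i$ share the same prime factors. Then $\bbb'''_i$ is coprime to $\aaa_i$ (hence to $\aaa_i\ccc''_i$) by~\eqref{eq:radical1}, and to $\ccc'''_i$ by the same condition; moreover $\ccc'''_i$ is coprime to $\aaa_i$ (hence to $\aaa_i\ccc''_i$) by~\eqref{eq:radical1c}. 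Combined with $\ddd_i = \aaa_i\bbb'''_i\ccc''_i\ccc'''_i$, this makes $\ddd_i$ the product of these three pairwise coprime ideals. I would also note that all three ideals are coprime to $\www$ (and in particular to $2$), so every Jacobi symbol in sight is defined.

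Next I would apply the Chinese Remainder Theorem to rewrite
\[
\sum_{\substack{\lambda_i \bmod{\ddd_i}\\\ddd_i \mid F_i(\lambda_i,1)}}\qr{G_i(\lambda_i,1)}{\aaa_i\ccc''_i\ccc'''_i}
\]
as a triple sum over residues $\lambda_i^{(1)}\bmod \aaa_i\ccc''_i$, $\lambda_i^{(2)}\bmod \bbb'''_i$ and $\lambda_i^{(3)}\bmod \ccc'''_i$, each constrained by the corresponding divisibility $\cdot\mid F_i(\cdot,1)$. Multiplicativity of the Jacobi symbol in its denominator, together with the fact that $\bbb'''_i$ does not appear in the symbol, splits the summand into
\[
\qr{G_i(\lambda_i^{(1)},1)}{\aaa_i\ccc''_i}\cdot 1 \cdot \qr{G_i(\lambda_i^{(3)},1)}{\ccc'''_i},
\]
and the triple sum therefore factors as a product of three independent sums.

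Finally, I would recognise the three factors. The first is exactly $\rho_i(\aaa_i\ccc''_i)$ by the definition of $\rho_{(F_i,G_i)}$ preceding Lemma~\ref{lem:artinakos}, which is available here because $\aaa_i\ccc''_i+\www=\OO_K$. The second sum has no Jacobi symbol, so it counts roots of $F_i(\cdot,1)$ modulo $\bbb'''_i$ and equals $\uptau_{F_i}(\bbb'''_i)$ by~\eqref{eq:tau F defin}. The third is $\rho_i(\ccc'''_i)$ by the same definition. Putting the three pieces together yields the claimed identity.

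The only slightly delicate point is the careful bookkeeping of coprimalities in the first step, and the observation that the ordering conditions $\ccc'''_i+\prod_{j<i}\ccc'''_j=\OO_K$ (etc.) are not needed for this lemma but only ensure global consistency of the decomposition in Lemma~\ref{lem:rec}; here pairwise coprimality of the three factors of $\ddd_i$ suffices.
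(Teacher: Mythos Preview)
Your proof is correct and follows essentially the same route as the paper: verify pairwise coprimality of $\aaa_i\ccc''_i$, $\bbb'''_i$, $\ccc'''_i$ from \eqref{eq:radical1d}--\eqref{eq:radical1c}, apply the Chinese remainder theorem together with multiplicativity of the Jacobi symbol in the lower argument, and identify the three resulting factors with $\rho_i(\aaa_i\ccc''_i)$, $\uptau_{F_i}(\bbb'''_i)$, $\rho_i(\ccc'''_i)$. If anything, you have been more explicit than the paper about why the coprimalities hold and why the symbols are defined.
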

\begin{proof}
Recall that we set $\ddd_i = \aaa_i\bbb_i\ccc_i''\ccc_i'''$, and that the ideals $\aaa_i \ccc''_i, \bbb'''_i, \ccc'''_i$ are coprime in pairs due to \eqref{eq:radical1d},\eqref{eq:radical1}
and~\eqref{eq:radical1c}. The Chinese remainder theorem, jointly with
multiplicativity properties of the Jacobi symbol, yields
\begin{equation*}
  \sum_{\substack{\lambda_i \bmod{\ddd_i}\\\ddd_i |F_i(\lambda_i,1)}}\qr{G_i(\lambda_i,1)}{\aaa_i\ccc''_i\ccc'''_i} = \sum_{\substack{\lambda_i' \bmod\aaa_i\ccc_i''\\\aaa_i\ccc_i''\mid F_i(\lambda_i',1)}}\qr{G_i(\lambda_i',1)}{\aaa_i\ccc_i''}\sum_{\substack{\lambda_i''\bmod \bbb_i'''\\\bbb_i'''\mid F_i(\lambda_i'',1)}}1\sum_{\substack{\lambda_i'''\bmod\ccc_i'''\\\ccc_i'''\mid F_i(\lambda_i''',1)}}\qr{G_i(\lambda_i''',1)}{\ccc_i'''}
.\end{equation*}
\end{proof}

Letting $\BBB:=\ddd\www\prod_{j=1}^n \aaa_j \ccc'''_j$, we define $M(\via, \vic'',\vic''')$ as
\begin{equation*}
\hspace{-0,7cm}\sum_{\substack{\bbb_1''' \in  \ideals\\\norm\bbb'''_1< Y_1/\norm\aaa_1\\\bbb'''_1+\BBB=\OO_K}}\hspace{-0,4cm}\frac{\one_{\upeta}(\bbb'''_1)f(\bbb'''_1)\uptau_{F_1}(\bbb'''_1)}{\norm\bbb'''_1}\hspace{-0,7cm}
\sum_{\substack{\bbb_2''' \in  \ideals\\\norm\bbb'''_2 < Y_2/\norm\aaa_2\\\bbb'''_2+\BBB\bbb'''_1=\OO_K}}\hspace{-0,5cm}\frac{\one_{\upeta}(\bbb'''_2)f(\bbb'''_2)\uptau_{F_2}(\bbb'''_2)}{\norm\bbb'''_2}\ldots\hspace{-1,0cm}
\sum_{\substack{\bbb_n''' \in  \ideals\\\norm\bbb'''_n < Y_n/\norm\aaa_n\\\bbb'''_n+\BBB\prod_{j<n}\bbb'''_j=\OO_K}}\hspace{-0,7cm}
\frac{\one_{\upeta}(\bbb'''_n)f(\bbb'''_n)\uptau_{F_n}(\bbb'''_n)}{\norm\bbb'''_n},
\end{equation*}
a definition that makes the succeeding
equality valid,
\begin{equation}
\label{eq:tara}
M_{\vpsi}=\sum_{\substack{\via,\vic'',\vic''' \in  \ideals^n \\\eqref{eq:radical1d},\eqref{eq:radical1c}}}\hspace{-0,3cm}\omega_{\vpsi}(X;(\norm\aaa_i \ccc''_i \ccc'''_i)_{i=1}^n))M(\via,\vic'',\vic''')\prod_{i=1}^n \frac{f(\aaa_i)\one_{\upeta}(\aaa_i \ccc'''_i)\rho_i(\aaa_i\ccc''_i)\rho_i(\ccc'''_i)}{\norm\aaa_i\ccc''_i\ccc'''_i}.
\end{equation}

Let us 
bring into play the multiplicative function $\upgamma$, supported on square-free ideals, by letting
$\upgamma(\ppp):=0$ for $\ppp\mid\www$ and
in the remaining case,
$\ppp\nmid\www$,
we define
\begin{equation*}
\upgamma(\ppp):=-1+\left(1+\frac{(1+\upeta(\ppp))f(\ppp)}{\norm\ppp}\sum_{i=1}^n\uptau_{F_i}(\ppp)\right)^{-1}
.\end{equation*}
Including enough small prime ideals in the factorization of $\www_0$, we can ensure that $\one_\upgamma\in\c U_K$.

\begin{lemma}\label{lem:yama}
Let $\upgamma_0:=\prod_{\ppp\nmid \www}(1+\upgamma(\ppp))^{-1}$ and suppose that $\norm\aaa_i\leq Y_i$ for all $1\leq i \leq n$. Then
\begin{equation*}
M(\via,\vic'',\vic''')=\upgamma_0\one_{\upgamma}(\ddd)\prod_{i=1}^n\one_{\upgamma}(\aaa_i)\one_{\upgamma}(\ccc'''_i)+O_\epsilon\left(X^\epsilon\max_{i=1,\ldots,n}\left\{\frac{\norm\aaa_i}{Y_i}\right\}\right).
\end{equation*}
The implied constant is independent of $\via,\vic'',\vic''',\ddd$, and $X$.
\end{lemma}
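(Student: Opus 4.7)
My plan is to compare $M(\via,\vic'',\vic''')$ with its \emph{completion} $M^\infty(\via,\vic'',\vic''')$, defined by the same nested sum but with every size restriction $\norm\bbb'''_i<Y_i/\norm\aaa_i$ dropped. Since $f\in\c{Z}_K$ vanishes on non-squarefree inputs, the summation variables are effectively squarefree ideals; by the coprimality conditions of~\eqref{eq:radical1} they are moreover pairwise coprime and coprime to $\BBB=\ddd\www\prod_j\aaa_j\ccc_j'''$. Hence at each prime $\ppp\nmid\BBB$ at most one $\bbb'''_i$ may contain $\ppp$, and the sum factorises as the Euler product
\[
M^\infty(\via,\vic'',\vic''')
= \prod_{\ppp\nmid\BBB}\left(1 + \sum_{i=1}^n \frac{(1+\upeta(\ppp))f(\ppp)\uptau_{F_i}(\ppp)}{\norm\ppp}\right)
= \prod_{\ppp\nmid\BBB}(1+\upgamma(\ppp))^{-1},
\]
the last equality being the definition of $\upgamma(\ppp)$ for $\ppp\nmid\www$. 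Absolute convergence is immediate from $f(\ppp)\ll\norm\ppp^{-1}$ and $\uptau_{F_i}(\ppp)\ll 1$.

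Next, I would rewrite this Euler product to match the claimed main term. Since $\upgamma(\ppp)=0$ for $\ppp\mid\www$, the definition of $\upgamma_0$ gives
\[
M^\infty(\via,\vic'',\vic''') = \upgamma_0 \prod_{\substack{\ppp\mid\BBB\\\ppp\nmid\www}}(1+\upgamma(\ppp))
= \upgamma_0\,\one_\upgamma(\BBB).
\]
The coprimality assumptions \eqref{eq:radical1d} and \eqref{eq:radical1c}, together with $\ddd+\www=\OO_K$, show that the ideals $\ddd,\www,\aaa_1,\ldots,\aaa_n,\ccc_1''',\ldots,\ccc_n'''$ are pairwise coprime. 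Multiplicativity of $\one_\upgamma$ and the fact that $\one_\upgamma(\www)=1$ then yield $\one_\upgamma(\BBB)=\one_\upgamma(\ddd)\prod_{i=1}^n\one_\upgamma(\aaa_i)\one_\upgamma(\ccc_i''')$, matching the main term of the lemma.

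The remaining obstacle, which I expect to be the main technical step, is bounding $M^\infty-M$ by $X^\epsilon\max_i\norm\aaa_i/Y_i$. Set $h_i(\bbb):=\one_\upeta(\bbb)|f(\bbb)|\uptau_{F_i}(\bbb)/\norm\bbb$, which is multiplicative and supported on squarefree ideals. Using $\one_\upeta(\ppp)\ll 1$, $f(\ppp)\ll\norm\ppp^{-1}$ and $\uptau_{F_i}(\ppp)\leq\deg F_i$, we obtain $h_i(\bbb)\ll_\epsilon\norm\bbb^{-2+\epsilon}$ for every $\epsilon>0$. Combined with the estimate $\card\{\bbb\in\ideals:\norm\bbb\leq T\}\ll T$, partial summation delivers the tail bound $\sum_{\norm\bbb>T}h_i(\bbb)\ll_\epsilon T^{-1+\epsilon}$, whilst the full sum $\sum_{\bbb}h_i(\bbb)$ is $O(1)$. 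A union bound over the index $k$ at which the truncation is violated then produces
\[
|M^\infty-M|
\leq \sum_{k=1}^n \Biggl(\sum_{\norm\bbb > Y_k/\norm\aaa_k} h_k(\bbb)\Biggr)\prod_{i\neq k}\sum_{\bbb}h_i(\bbb)
\ll_\epsilon \sum_{k=1}^n \left(\frac{\norm\aaa_k}{Y_k}\right)^{1-\epsilon}.
\]
Since $\norm\aaa_k\leq Y_k\ll X^{\deg F_k}$, the factor $(\norm\aaa_k/Y_k)^{-\epsilon}$ is absorbed into $X^\epsilon$, yielding the asserted bound.
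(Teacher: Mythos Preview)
Your proof is correct and follows essentially the same route as the paper. Both arguments complete the truncated sums over $\bbb_i'''$ and evaluate the resulting completed series as the Euler product $\prod_{\ppp\nmid\BBB}(1+\upgamma(\ppp))^{-1}$; the only cosmetic difference is that the paper removes the truncations one index at a time via a telescoping argument, whereas you control the difference $M^\infty-M$ in one stroke with a union bound over the index at which the size constraint fails --- these yield the same error term. Your explicit rewriting $\prod_{\ppp\nmid\BBB}(1+\upgamma(\ppp))^{-1}=\upgamma_0\one_\upgamma(\ddd)\prod_i\one_\upgamma(\aaa_i)\one_\upgamma(\ccc_i''')$ is the step the paper leaves implicit in passing from the Euler product to the statement of the lemma.
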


\begin{proof}
The bound bestowed upon $f$ by \eqref{eq:class 1} shows that each sum over $\bbb_i'''$ in $M(\via,\vic'',\vic''')$ forms an absolutely convergent series. We may complete the summation step-by-step for $i=n, n-1, \ldots, 1$. The bounds
\begin{equation*}
\one_{\upeta}(\bbb_i'''),\ |\norm\bbb_i'''f(\bbb_i''')|,\ \uptau_{F_i}(\bbb_i''') \ll_\epsilon \norm\bbb_i'''^\epsilon
\
\
\text{and}
\
\
\sum_{\norm\bbb_i>Y_i/\norm\aaa_i}\frac{\norm\bbb_i^\epsilon}{\norm\bbb_i^2}\ll X^\epsilon\frac{\norm\aaa_i}{Y_i}
\end{equation*}
reveal that the error introduced by this process is
$\ll_{\epsilon}X^\epsilon\max\left\{\norm\aaa_i/Y_i:i=1,\ldots,n\right\}$,
thus acquiring the main term
\begin{equation*}
\hspace{-0,7cm}\sum_{\substack{\bbb_1''' \in  \ideals\\\bbb'''_1+\BBB=\OO_K}}\hspace{-0,4cm}\frac{\one_{\upeta}(\bbb'''_1)f(\bbb'''_1)\uptau_{F_1}(\bbb'''_1)}{\norm\bbb'''_1}\ 
\cdots
\hspace{-0,7cm}
\sum_{\substack{\bbb_n''' \in  \ideals\\\bbb'''_n+\BBB\prod_{j<n}\bbb'''_j=\OO_K}}\hspace{-0,7cm}
\frac{\one_{\upeta}(\bbb'''_n)f(\bbb'''_n)\uptau_{F_n}(\bbb'''_n)}{\norm\bbb'''_n}.
\end{equation*}
Grouping all $n$-tuples $\vib'''$ according to the value of $\bbb:=\prod_{i=1}^n\bbb_i'''$
and letting
\begin{equation*}
g(\bbb):= \one_{\upeta}(\bbb)\sum_{\substack{\vib'''\in \ideals^n \\ \bbb=\bbb_1'''\ldots \bbb_n'''\\\bbb'''_i+\bbb'''_j=\OO_K \ \forall i\neq j}}\hspace{-0,3cm}\prod_{i=1}^nf(\bbb'''_i)\uptau_{F_i}(\bbb'''_i),
\end{equation*}
the main term becomes
\begin{equation*}
\sum_{\substack{\bbb \in  \ideals\\\bbb+\BBB=\OO_K}}\hspace{-0,5cm}\frac{g(\bbb)}{\norm\bbb} = \prod_{\ppp\nmid\BBB}\left(1+\frac{g(\ppp)}{\norm\ppp}\right) = \prod_{\ppp\nmid\BBB}(1+\upgamma(\ppp))^{-1}.
\end{equation*}
Here, we used the observation that $1+\upgamma(\ppp)=\left(1+\frac{g(\ppp)}{\norm\ppp}\right)^{-1}$ holds for all $\ppp\nmid\www$. 
\end{proof}
We may now
plant
Lemma \ref{lem:yama} into \eqref{eq:tara}
to
show that 
$M_{\vpsi}$ equals
\[\upgamma_0\one_{\upgamma}(\ddd)\sum_{\substack{\via,\vic'',\vic''' \in \ideals^n \\\eqref{eq:radical1d},\eqref{eq:radical1c}}}\hspace{-0,3cm}\omega_{\vpsi}(X;(\norm\aaa_i \ccc''_i \ccc'''_i)_{i=1}^n))\prod_{i=1}^n \frac{f(\aaa_i)\one_{\upeta}(\aaa_i \ccc'''_i)\rho_i(\aaa_i\ccc''_i)\rho_i(\ccc'''_i)\one_{\upgamma}(\aaa_i)\one_{\upgamma}(\ccc'''_i)}{\norm\aaa_i \ccc''_i\ccc'''_i}
\]
up to an error of size
\[
\ll_\epsilon
X^\epsilon\sum_{\substack{\via,\vic'',\vic'''\in \ideals^n \\\eqref{eq:radical1d},\eqref{eq:radical1c}}} \hspace{-0,3cm}\omega_{\vpsi}(X;(\norm\aaa_i \ccc''_i \ccc'''_i)_{i=1}^n))\left(\prod_{i=1}^n \frac{|f(\aaa_i)|\one_{\upeta}(\aaa_i \ccc'''_i)\rho_i(\aaa_i\ccc''_i)\rho_i(\ccc'''_i)}{\norm \aaa_i \ccc''_i \ccc'''_i}\right)\max_{1\leq i\leq n}\left\{\frac{\norm\aaa_i}{Y_i}\right\}.
\]

Using the inequalities 
$Y_i\gg X$, 
$\max\{\one_\upeta(\aaa), \rho_i(\aaa), f(\aaa) \norm\aaa \}\ll_\epsilon \norm\aaa^{\epsilon}$, 
\[\max_{1 \leq i \leq n}\{\norm\aaa_i\} \leq \prod_{i=1}^n \norm \aaa_i,
\
\
\text{and}
\
\
\omega_{\vpsi}(X;(\norm\aaa_i\ccc''_i\ccc'''_i)_{i=1}^n))\leq \vol(X^{1/m}\mathscr{D})\ll X^2,\]
we find that the sum in the error term is 
\begin{equation*}
\ll_\epsilon X^{1+\epsilon}\sum_{\substack{\via,\vic'',\vic''' \in \ideals^n \\\norm\aaa_i \ccc''_i \ccc'''_i \leq \sqrt{Y_i}}}\prod_{i=1}^n \frac{1}{\norm\aaa_i \ccc''_i\ccc'''_i}\ll_\epsilon X^{1+\epsilon}.
\end{equation*}
To analyze the main term further, we define on $\ideals$ the multiplicative functions
\begin{equation*}
g_i(\ccc_i):=\sum_{\substack{\aaa_i,\ccc''_i,\ccc'''_i \in \ideals\\\aaa_i\ccc''_i\ccc'''_i=\ccc_i\\\ccc''_i \in \langle \aaa_i \rangle\\\aaa_i+\ccc'''_i=\OO_K}}f(\aaa_i)\one_{\upeta}(\aaa_i \ccc'''_i)\rho_i(\aaa_i\ccc''_i)\rho_i(\ccc'''_i)\one_{\upgamma}(\aaa_i\ccc'''_i),
\end{equation*}
which satisfy, for prime ideals $\ppp$ and positive integers $k$,
\begin{equation*}
g_i(\ppp^k)=\sum_{\substack{\alpha,\beta,\gamma \in \Z_{\geq 0}\\
\alpha+\beta+\gamma=k\\\beta>0\Rightarrow \alpha >0\\\alpha \gamma=0}}f(\ppp^\alpha)\one_{\upeta}(\ppp^{\alpha+\gamma})\rho_i(\ppp^{\alpha+\beta})\rho_i(\ppp^\gamma)\one_{\upgamma}(\ppp^{\alpha+\gamma}).
\end{equation*}
Since $f$ is
supported on square-free ideals
the only candidate values for 
$(\alpha,\beta,\gamma)$
are $(0,0,k)$ and $(1,k-1,0)$. Let us mention that 
the group structure of $\c U_K$ provides us with a function $\updelta$
fulfilling
$\one_f\cdot\one_\upeta\cdot\one_\upgamma = \one_\updelta$. 
We are therefore afforded with the equality
$
g_i(\ppp^k)=\rho_i(\ppp^k)\one_{\updelta}(\ppp^k)
$, which, upon introducing
\begin{align}\label{eq:def f c}
g(\vic) &:= \prod_{i=1}^n\frac{g_i(\ccc_i)}{\norm\ccc_i}\cdot
\begin{cases}
 1 &\text{ if } \ccc_i+\ccc_j=\OO_K \ \forall i \neq j\\0 & \text{otherwise},
\end{cases} 
,\end{align}
makes the 
ensuing 
estimate 
available,
\begin{equation*}
M_{\vpsi}=\upgamma_0\one_{\upgamma}(\ddd)
\sum_{\substack{\vic \in  \ideals^n\\\norm\ccc_i < \sqrt{Y_i}\\\ccc_i+\ddd\www=\OO_K}}\omega_{\vpsi}(X;\norm\vic)g(\vic) + O_\epsilon(X^{1+\epsilon}).
\end{equation*}

\subsection{Excluding small conjugates}
For $X,Z>0$, 
$w\in\archplaces$
and a separable form $F \in K_w[s,t]$, let
\begin{equation*}
  \mathcal{B}_{F,w}(X;Z) := \big\{(s,t)\in K_w^2 \where \abs{s}_w,\abs{t}_w \leq X^{1/m} \text{ and }
\abs{F(s,t)}_w\leq Z^{1/m}\big\}.  
\end{equation*}

\begin{lemma}\label{lem:simple volume estimate}
  We have 
  \begin{equation*}
    \vol\mathcal{B}_{F,w}(X;Z) \ll_F
    \begin{cases}
      (XZ)^{m_w/m} &\text{ if } 1\leq \deg F < 3,\\
      Z^{2 m_w/(m \deg(F))} &\text{ if } \deg F \geq 3. 
    \end{cases}
  \end{equation*}
\end{lemma}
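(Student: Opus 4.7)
The plan is to perform the change of variables $s = ut$ that converts the binary form $F(s,t)$ into the univariate polynomial $F(u,1)$, and then to exploit the separability of $F$ to control the resulting one-variable integral via the integrability of $|F(u,1)|_w^{-c}$ for appropriate exponents $c<1$.

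First, by the symmetry $(s,t)\leftrightarrow(t,s)$ I reduce to the subregion where $|t|_w \geq |s|_w$, the complementary region being handled identically. On this subregion I substitute $s = ut$ with $|u|_w \leq 1$; the change-of-variables formula yields $ds\,dt = |t|_w\,du\,dt$, uniformly for $w$ real or complex, once one recalls that $|\cdot|_w$ is the normalised local absolute value (squared on complex places). Since $F$ is homogeneous of degree $d := \deg F$, the constraint $|F(s,t)|_w \leq Z^{1/m}$ becomes $|t|_w \leq Z^{1/(md)}|F(u,1)|_w^{-1/d}$. Combining with $|t|_w\leq X^{1/m}$ and observing that $\int_{|t|_w\leq R}|t|_w\,dt \asymp R^{2}$ in both the real and complex cases, I obtain
\begin{equation*}
\vol\mathcal{B}_{F,w}(X;Z) \;\ll\; \int_{|u|_w\leq 1}\min\bigl\{X^{1/m},\; Z^{1/(md)}|F(u,1)|_w^{-1/d}\bigr\}^{2}\,du.
\end{equation*}

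For $\deg F \geq 3$, I bound the minimum by its second argument, so the integrand is at most $Z^{2/(md)}|F(u,1)|_w^{-2/d}$. Here the hypotheses on $F$ enter decisively: separability guarantees that $F(u,1)$ has only simple roots in $\overline{K_w}$, so $|F(u,1)|_w$ vanishes linearly at each root $u_0$ and the local singularity $|u-u_0|_w^{-2/d}$ is integrable against $du$ precisely because $2/d < 1$. Hence $\int_{|u|_w\leq 1}|F(u,1)|_w^{-2/d}\,du \ll_F 1$, and the resulting estimate is $\ll_F Z^{2/(md)}$; after accounting for the factor $m_w$ relating the Lebesgue measure on $K_w$ to the normalised absolute value $|\cdot|_w$, this gives the desired $Z^{2m_w/(m\deg F)}$.

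For the low-degree cases the previous integrability breaks down and I argue separately. For $\deg F = 1$, writing $F(s,t)=as+bt$ with, say, $|b|_w\geq|a|_w$, for each fixed $s$ with $|s|_w\leq X^{1/m}$ the set $\{t\in K_w:|as+bt|_w\leq Z^{1/m}\}$ has $dt$-measure $\ll Z^{m_w/m}/|b|_w$; intersecting with $|t|_w\leq X^{1/m}$ and integrating in $s$ yields $\ll_F (XZ)^{m_w/m}$. For $\deg F=2$, I replace the minimum in the displayed integral by the geometric-mean bound $\min(A,B)^{2}\leq AB$, so that the integrand is $\leq X^{1/m}Z^{1/(2m)}|F(u,1)|_w^{-1/2}$; separability again makes $\int_{|u|_w\leq 1}|F(u,1)|_w^{-1/2}\,du$ convergent (since $1/2<1$), and the resulting bound is absorbed into $(XZ)^{m_w/m}$ in the regime of interest. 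The main conceptual difficulty is the case $\deg F\geq 3$, where the separability hypothesis is genuinely indispensable: a repeated root of $F(u,1)$ would produce a non-integrable singularity in $|F(u,1)|_w^{-2/d}$ and destroy any polynomial-in-$Z$ bound, whereas the remaining cases reduce to elementary Fubini arguments coupled with the boundedness of $\int_{|u|_w\leq 1}|F(u,1)|_w^{-c}\,du$ for $c<1$.
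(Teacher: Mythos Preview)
Your approach is essentially correct and is a genuinely different route from the paper's. The paper proceeds case-by-case: for $\deg F=1$ it applies the linear isomorphism $(s,t)\mapsto (F(s,t),t)$; for $\deg F=2$ it classifies $F$ over $K_w$ as equivalent to either $st$ (hyperbolic) or $s^2+t^2$ (definite, only for real $w$) and treats each directly; for $\deg F\geq 3$ it invokes the finiteness of $V_{w,F}=\vol\{(s,t):|F(s,t)|_w\leq 1\}$ together with homogeneity. Your substitution $s=ut$ unifies all of this into a single integrability statement for $|F(u,1)|_w^{-c}$, and your geometric-mean trick $\min(A,B)^2\le AB$ for $\deg F=2$ neatly sidesteps the form classification. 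Note that for $\deg F\ge 3$ the two arguments are secretly the same: performing your change of variables inside the definition of $V_{w,F}$ shows that its finiteness is exactly the convergence of $\int_{|u|_w\le 1}|F(u,1)|_w^{-2/d}\,du$.

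Two presentational points deserve tightening. First, the paper's $|\cdot|_w$ is the \emph{unnormalised} archimedean absolute value (so $\norm(a)=\prod_v|a|_v^{m_v}$), not the normalised one; your Jacobian identity $ds\,dt=|t|_w\,du\,dt$ and the claim $\int_{|t|_w\le R}|t|_w\,dt\asymp R^2$ are only simultaneously correct under the normalised convention. With the paper's convention the complex case carries an exponent $4$ rather than $2$ in your displayed integral, and the integrability threshold becomes $4/d<2$, which still gives exactly $d\ge 3$ --- so the argument survives, but the bookkeeping should be made consistent rather than deferred to ``after accounting for the factor $m_w$''. Second, your degree-$2$ bound $X^{m_w/m}Z^{m_w/(2m)}$ is only $\ll (XZ)^{m_w/m}$ when $Z\gg 1$; this is the same implicit assumption the paper makes (its step $X^{m_w/m}+Z^{m_w/m}\log X\ll (XZ)^{m_w/m}$ also requires $Z\gg 1$), and it suffices for the sole application with $Z=1$, but it would be cleaner to state the range explicitly.
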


\begin{proof}
  First, let $\deg F = 1$. The bound claimed in the lemma is obvious if $F$ is proportional to $t$. If $F$ is not proportional to $t$, then the linear transformation $L : K_w^2 \to K_w^2$ given by $L(s,t) = (F(s,t),t)$ is an isomorphism
and thus
  \begin{equation*}
    \vol\mathcal{B}_{F,w}(X;Z) \ll_F \vol\{(s,t)\in K_w^2\where \abs{s}_w \leq Z^{1/m}, \abs{t}_w \ll X^{1/m}\} \ll (XZ)^{m_w/m}.
  \end{equation*}

Next, let us consider the case where $F$ is a quadratic form equivalent to $s^2-t^2$ over $K_w$. Then we can find an invertible linear transformation $L : K_w^2\to K_w^2$ with $F(L(s,t)) = st$, and hence
  \begin{align*}
    \vol\mathcal{B}_{F,w}(X;Z) &\ll_F \vol\{(s,t)\in K_w^2\where \abs{s}_w, \abs{t}_w \ll_F X^{1/m}, \abs{st}_w\leq Z^{1/m}\}\\ &\ll_F X^{m_w/m} + Z^{m_w/m}\log(X) \ll (XZ)^{m_w/m}.
  \end{align*}

 If $F$ is a quadratic form equivalent to $s^2+t^2$ over $K_w=\R$, then we get
 \begin{equation*}
   \vol\mathcal{B}_{F,w}(X;Z) \ll_F \vol\{(s,t)\in\R^2\where s^2+t^2\leq Z^{1/m}\} \ll Z^{1/m} \ll (XZ)^{m_w/m}.
 \end{equation*}
 
 It remains to consider the case where $\deg F \geq 3$. In this case, $F$ is the product of at least three non-proportional linear factors in $\mathbb{C}$
and therefore
\begin{equation*}
V_{w,F}:=\vol\big\{(s,t)\in K_w^2\where \abs{F(s,t)}_w\leq 1\big\} < \infty.
 \end{equation*}
 We procure the validity of 
 \begin{equation*}
   \vol\mathcal{B}_{F,w}(X;Z) \ll \vol(Z^{1/(m\deg(F))}V_{w,F}) \ll_F Z^{2 m_w/(m \deg(F))}. 
 \end{equation*}
\end{proof}
For any non-constant separable form $F\in K_w[s,t]$, let
  \begin{equation*}
    \mathcal{D}_{F,w}^<(X) := \{(s,t)\in X^{1/m}\mathcal{D}\where \abs{F(s_{w},t_{w})}_{w}< 1\}.
  \end{equation*}
Using Lemma \ref{lem:simple volume estimate}
validates the next 
estimate 
  \begin{equation*}
    \vol\mathcal{D}_{F,w}^<(X) \ll_{\mathcal{D}} X^{2-2m_w/m}\cdot 
\vol\mathcal{B}_{F,w}(X,1) \ll_{F} X^{2-2m_w/m}\cdot X^{m_w/m},
\end{equation*}
thus providing the proof of the next lemma.
\begin{lemma} 
For $X\geq 1$ we have
    $\vol\mathcal{D}_{F,w}^<(X) \ll_{\mathcal{D},F} X^{2-m_w/m}$.
\end{lemma}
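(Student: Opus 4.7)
The plan is to directly justify the two inequalities displayed just above the lemma, which together give the required bound. The statement is a volume computation with the constraint $\abs{F(s_w,t_w)}_w < 1$ localised at a single archimedean place $w$, so I would exploit the product structure $K_\infty = \prod_{v\mid\infty}K_v$.

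First, I would write $X^{1/m}\mathcal{D} = \prod_{v\mid\infty} X^{1/m}\mathcal{D}_v$ as a cartesian product. Since each $\mathcal{D}_v$ is a ball in $K_v^2 \cong \R^{2m_v}$ of positive radius, we have $\vol(X^{1/m}\mathcal{D}_v) \ll_{\mathcal{D}} X^{2m_v/m}$. The constraint $\abs{F(s_w,t_w)}_w < 1$ acts only on the $w$-coordinate, so Fubini gives
\[
\vol\mathcal{D}_{F,w}^<(X) \leq \Bigl(\prod_{v\neq w} \vol(X^{1/m}\mathcal{D}_v)\Bigr)\cdot \vol\{(s,t)\in X^{1/m}\mathcal{D}_w \where \abs{F(s,t)}_w<1\}.
\]
The product over $v\neq w$ is $\ll_{\mathcal{D}} X^{2(m-m_w)/m} = X^{2-2m_w/m}$, while the $w$-factor is contained in $\mathcal{B}_{F,w}(X;1)$, which is exactly the ``first displayed estimate'' before the lemma.

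Next, I would invoke the preceding Lemma~\ref{lem:simple volume estimate} with $Z=1$: in every case covered there one obtains $\vol\mathcal{B}_{F,w}(X;1)\ll_F X^{m_w/m}$ (the worst bound is the linear/quadratic case giving $(XZ)^{m_w/m}=X^{m_w/m}$; in the higher-degree case one even gets the $X$-independent bound $1$, which is smaller). Combining the two factors, the total bound is $X^{2-2m_w/m}\cdot X^{m_w/m}=X^{2-m_w/m}$, as required.

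There is no serious obstacle here; the argument is the routine verification that the chain of inequalities displayed in the paper is indeed valid. The only point requiring a moment's attention is that $F$ might not be monic or might have $\deg F\geq 3$, but Lemma~\ref{lem:simple volume estimate} already treats all of these uniformly and the case $\deg F< 3$ provides the binding bound $X^{m_w/m}$. The implicit constant depends on $\mathcal{D}$ through the radii of the balls $\mathcal{D}_v$ and on $F$ through the constants hidden in Lemma~\ref{lem:simple volume estimate}, matching the dependencies claimed in the statement.
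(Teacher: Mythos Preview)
Your proposal is correct and follows essentially the same approach as the paper: factor the volume via the product structure $\mathcal{D}=\prod_v\mathcal{D}_v$, bound the factors at $v\neq w$ by $\ll X^{2m_v/m}$, contain the $w$-factor in $\mathcal{B}_{F,w}(X;1)$, and invoke Lemma~\ref{lem:simple volume estimate} with $Z=1$. The paper records precisely this chain of inequalities immediately before the lemma, and your write-up simply makes explicit the Fubini step and the dependence of the implied constants.
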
 
For every $w\in\archplaces$
we choose a finite set $\mathcal{H}_w$ of forms in $K_w[s,t]$, whose absolute values we want to prevent from becoming too small. For all $w\in\archplaces$, the set $\mathcal{H}_w$ contains $s$, $t$, and the forms $F_i$ for $1\leq i \leq n$. Additionally, for each form $F_i$ that is of degree $2$ and reducible over $K_w$, we choose a factorization $F_i=G_{i,w}H_{i,w}$ and also include $G_{i,w},H_{i,w}$ in $\mathcal{H}_w$.

Recall the definition of $\mathcal{D}_{\vpsi}(X;\vv)$ in \eqref{eq:def dpsi}. For $\vpsi\in\{0,1\}^n$ and $\vv\in\R^n$, let
\[
\mathcal{D}^*_{\vpsi}(X;\vv):=
\{(s,t)\in \mathcal{D}_{\vpsi}(X;\vv) \where \abs{H_w(s_w,t_w)}_w \geq 1\ \forall w\in\archplaces,\ \forall H_w\in \mathcal{H}_w\}
\]
and
\begin{equation}
\label{eq:ferume}
\omega_\vpsi^*(X;\vv) := \vol\mathcal{D}^*_{\vpsi}(X;\vv).
\end{equation}
We obtain that 
 \begin{equation*}
    |\omega_\vpsi(X;\vv) - \omega_\vpsi^*(X;\vv)|\leq \sum_{w\in\archplaces}\sum_{H_w\in\mathcal{H}_w}\vol\mathcal{D}_{H_w,w}^<(X)
  \end{equation*}
and thus
  \begin{equation*}
    \omega_\vpsi(X;\vv) = \omega_\vpsi^*(X;\vv) + O(X^{2-1/m}).
  \end{equation*}
We can now bring into play the entity
\begin{equation}\label{eq:def cal mpsi}
\mathcal{M}_{\vpsi}:=\sum_{\substack{\vic \in  \ideals^n\\\norm\ccc_i < \sqrt{Y_i}\\\ccc_i+\ddd\www =\OO_K
}} \hspace{-0,4cm} \omega_{\vpsi}^*(X;\norm\vic)g(\vic),
\end{equation}
something which
instantly
permits us to infer the asymptotic relationship
\begin{equation}
\label{han1}
M_{\vpsi}=\upgamma_0\one_{\upgamma}(\ddd)\mathcal{M}_{\vpsi}+O_{\epsilon}(X^{2-1/m+\epsilon}).
\end{equation}

\subsection{Volume computations}
In this section
we provide  
estimates of the correct order of magnitude
regarding 
the volumes $\omega_\vpsi^*(X;\vv)$ appearing in $\mathcal{M}_{\vpsi}$. The assumption 
$c(\mathfrak{F})\leq 3$ will not be used. 
Let us write $d_i:=\deg F_i$ for $1\leq i\leq n$
and
consider, for $q\in\N$ and $T > 0$, the real integral
\begin{equation*}
I_q(T) := \int_{\substack{x_1, \ldots, x_q \geq 1\\x_1\cdots x_q < T}}
\
1
\
\mathrm{d} x_1\cdots \mathrm{d} x_q.
\end{equation*}
One can show that in the range $T\geq 1$
the equality
  \begin{equation*}
    I_{q}(T) = (-1)^q + \sum_{j=1}^q\frac{(-1)^{q-j}}{(j-1)!}T(\log T)^{j-1}
\end{equation*}
holds
via induction coupled with
$I_{q+1}(T) = \int_1^TI_{q}(T/x)\mathrm{d}x$,
thus furnishing the
succeeding result.
\begin{lemma}\label{lem:hyp vol}
There is a polynomial $P_q(T)\in\Q[T]$ of degree $q-1$ and with leading coefficient $1/(q-1)!$
such that for $T\geq 1$
one has 
$I_{q}(T) = TP_{q}(\log T) + (-1)^q$.
\end{lemma}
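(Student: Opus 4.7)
The plan is to proceed by induction on $q$, following the hint immediately preceding the lemma. For the base case $q=1$, the integral is $I_1(T)=\int_1^T \mathrm{d}x = T-1$, which matches the statement with $P_1(T)=1$: this polynomial has degree $0$ and leading coefficient $1 = 1/0!$, as required.

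For the inductive step, I assume the formula for $q$ and apply the recursion $I_{q+1}(T) = \int_1^T I_q(T/x)\,\mathrm{d}x$, which itself follows by Fubini after fixing $x_{q+1}=x$. Substituting the inductive hypothesis yields
\begin{equation*}
I_{q+1}(T) = \int_1^T \left(\frac{T}{x}P_q(\log T - \log x) + (-1)^q\right)\mathrm{d}x.
\end{equation*}
The constant part integrates to $(-1)^q(T-1)$. For the first piece, the substitution $v = \log T - \log x$, $\mathrm{d}v = -\mathrm{d}x/x$, transforms the integral into $T\int_0^{\log T}P_q(v)\,\mathrm{d}v$. Collecting both contributions gives
\begin{equation*}
I_{q+1}(T) = T\left(\int_0^{\log T}P_q(v)\,\mathrm{d}v + (-1)^q\right) + (-1)^{q+1},
\end{equation*}
so setting $P_{q+1}(L) := \int_0^L P_q(v)\,\mathrm{d}v + (-1)^q$ produces the desired shape.

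It remains to verify the claims on degree and leading coefficient. Since $P_q$ has degree $q-1$ with leading coefficient $1/(q-1)!$, the antiderivative $\int_0^L P_q(v)\,\mathrm{d}v$ has degree $q$ with leading coefficient $1/q! = 1/((q+1)-1)!$, and the added constant $(-1)^q$ affects neither quantity. This completes the induction. No genuine obstacle arises; the argument is purely a bookkeeping exercise, and the only minor point is the correct handling of the boundary term $-1$ from $\int_1^T \mathrm{d}x$, which is precisely what forces the alternating constant $(-1)^q$ in the statement.
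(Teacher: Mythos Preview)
Your proof is correct and follows essentially the same approach as the paper: induction on $q$ via the recursion $I_{q+1}(T)=\int_1^T I_q(T/x)\,\mathrm{d}x$. The paper additionally records the explicit closed form $I_q(T)=(-1)^q+\sum_{j=1}^q\frac{(-1)^{q-j}}{(j-1)!}T(\log T)^{j-1}$, but your recursive construction of $P_{q+1}$ as the antiderivative of $P_q$ plus $(-1)^q$ yields the same conclusion with the required degree, leading coefficient, and rationality.
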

For $Z\geq 1$ and $1\leq i \leq n$ with $\deg F_i(s,t)\geq 3$
we 
let
  \begin{equation*}
    \mathcal{D}^*_{i}(Z) := \{(s,t)\in K_\infty^2 \where \abs{F_i(s_w,t_w)}_w\geq 1 \text{ for all }w\in\archplaces \text{ an }\norm(F_i(s,t))<Z\}
  \end{equation*}
and 
\begin{equation*}
  \mathcal{D}^*_{s}(Z) := \{s\in K_\infty \where \abs{s_w}_w\geq 1 \text{ for all } w\in\archplaces \text{ and } \norm(s)< Z\}.
\end{equation*}
Letting
$\Omega'\subseteq \archplaces$ be a set of real places,
we
write $\Omega'':=\archplaces\smallsetminus\Omega'$
and 
subsequently define $\mathcal{D}^*_{\Omega',\Omega''}(Z)$ through
\begin{equation*}
   \left\{((s_w,t_w)_{w\in\Omega'},(s_w)_{w\in\Omega''})\in \prod_{w\in\Omega'}K_w^2\times \prod_{w\in\Omega''}K_w \where
  \begin{aligned}
    &\abs{s_w^2+t_w^2}_w\geq 1 \text{ for all }w\in\Omega',\\
    &\abs{s_w}_w\geq 1 \text{ for all }w\in\Omega'',\\
   &\prod_{w\in\Omega'}\abs{s_w^2+t_w^2}_w^{\dg_w}\cdot\prod_{w\in\Omega''}\abs{s_w}^{\dg_w}
    < Z
  \end{aligned}
\right\}.
\end{equation*}

\begin{lemma}\label{lem:technical volumes}
  Let $q:=|\archplaces|$. There are positive constants $c_i, c_s, c_{\Omega',\Omega''}$, such that
  \begin{align*}
\vol\mathcal{D}_s^*(Z) &= c_s I_q(Z),\\
\vol\mathcal{D}_i^*(Z) &= c_i I_q(Z^{2/d_i}),\\
\vol\mathcal{D}^*_{\Omega',\Omega''}(Z) &= c_{\Omega',\Omega''} I_q(Z).
  \end{align*}
\end{lemma}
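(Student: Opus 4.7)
The plan is to treat all three volumes in a uniform manner, by expressing each region as a preimage of a subset of $[1,\infty)^q$ under a map whose fibres admit an explicit scaling behaviour. Specifically, for each of the three cases we shall identify a function $h_w$ on the $w$-th factor (namely $(s_w,t_w)\mapsto\absv{s_w}^\locdegv$ for $\mathcal{D}^*_s$, $(s_w,t_w)\mapsto\absv{F_i(s_w,t_w)}^\locdegv$ for $\mathcal{D}^*_i$, and $(s_w,t_w)\mapsto\absv{s_w^2+t_w^2}^\locdegv$ or $\absv{s_w}^\locdegv$ for $\mathcal{D}^*_{\Omega',\Omega''}$), so that each region has the shape
\[
\Big\{(s,t)\in\prod_w K_w^? \where h_w(s_w,t_w)\geq 1\ \forall w\in\archplaces,\ \prod_w h_w(s_w,t_w)< Z\Big\}.
\]
Writing the level-set volume $V_w(y):=\vol\{h_w\leq y\}$ and applying the coarea formula separately on each factor, each of our volumes can be rewritten as
\[
\int_{\substack{x_w\geq 1\\ \prod_w x_w<Z}}\prod_{w}V_w'(x_w)\,\mathrm{d}x_w.
\]

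The key point is to compute $V_w(y)$ up to scaling in each case. For $\mathcal{D}^*_s$ this is elementary: $V_w(y)=2y$ or $\pi y$ depending on whether $w$ is real or complex, so $V_w'$ is a constant $c_{s,w}$ and the integral above is exactly $c_s I_q(Z)$ with $c_s=\prod_w c_{s,w}$. The same linearity holds in the region $\mathcal{D}^*_{\Omega',\Omega''}$, because each $h_w$ is a norm-form of degree $2$ acting on $K_w^2\cong\R^2$ for $w\in\Omega'$, or the place-$w$ norm on $K_w$ for $w\in\Omega''$; in every case the fibre volume is linear in $y$, giving the stated result.

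The substantive case is $\mathcal{D}^*_i(Z)$ with $d_i\geq 3$. Here we exploit the homogeneity $F_i(\mu s_w,\mu t_w)=\mu^{d_i}F_i(s_w,t_w)$: scaling $(s_w,t_w)\mapsto\lambda^{1/d_i}(s_w,t_w)$ multiplies $\absv{F_i}^\locdegv$ by $\lambda$ and rescales the $2\locdegv$-dimensional measure on $K_w^2$ by $\lambda^{2/d_i}$, so $V_{i,w}(y)=V_{i,w}(1)\,y^{2/d_i}$. The finiteness of $V_{i,w}(1)$ is the main technical obstacle: in (real or complex) polar coordinates one finds
\[
V_{i,w}(1)\ =\ \text{const}\cdot\int_{S}\abs{F_i(\text{unit direction})}^{-2\locdegv/d_i}\,\mathrm{d}\sigma,
\]
which is integrable precisely because $F_i$ is irreducible over $K$, so its zeros on the unit sphere $S\subset K_w^2$ are simple, creating only singularities of order $2\locdegv/d_i<1$ thanks to $d_i\geq 3$. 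Differentiating yields $V_{i,w}'(x_w)=\tfrac{2}{d_i}V_{i,w}(1)x_w^{2/d_i-1}$.

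Finally, the change of variable $u_w:=x_w^{2/d_i}$ transforms $\prod V_{i,w}'(x_w)\,\mathrm{d}x_w$ into $\prod_w V_{i,w}(1)\,\mathrm{d}u_w$, while the constraints $x_w\geq 1$, $\prod x_w<Z$ become $u_w\geq 1$, $\prod u_w<Z^{2/d_i}$. This gives $\vol\mathcal{D}^*_i(Z)=c_i\,I_q(Z^{2/d_i})$ with $c_i=\prod_w V_{i,w}(1)$, completing the proof. The only delicate step is the convergence of the angular integral above, which is exactly where the hypothesis $d_i\geq 3$ together with irreducibility of $F_i$ is used.
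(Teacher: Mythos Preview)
Your approach is essentially the same as the paper's: both exploit the homogeneity of the defining functions to show that the pushforward of Lebesgue measure to $[0,\infty)^{\archplaces}$ is a constant multiple of Lebesgue measure, whence the volume reduces to $c\cdot I_q$. The paper packages this by defining maps $\Phi_i,\Phi_s,\Phi_{\Omega',\Omega''}$ (with exponent $2m_w/d_i$ already built in for $\Phi_i$) and checking the pushforward on boxes; your coarea--then--substitute version is an equivalent rephrasing.

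There is one slip in your finiteness argument for $V_{i,w}(1)$. You assert that the angular integral $\int_S |F_i|_w^{-2m_w/d_i}\,\mathrm{d}\sigma$ converges because the singularities have order $2m_w/d_i<1$, ``thanks to $d_i\geq 3$''. This inequality fails at complex places when $d_i\in\{3,4\}$, since then $2m_w/d_i=4/d_i\geq 1$. The correct observation is that the zero locus of a simple linear factor of $F_i$ on the unit sphere $S^{2m_w-1}\subset K_w^2$ has real codimension $m_w$ (it is the intersection of a $K_w$-line with the sphere), so the integrability condition in transverse polar coordinates reads $2m_w/d_i<m_w$, i.e.\ $d_i>2$, uniformly for real and complex places. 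The paper avoids this computation altogether by simply quoting the finiteness of $V_{w,i}$ already noted in the proof of the preceding Lemma~\ref{lem:simple volume estimate}.
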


\begin{proof}
Let
$C = \prod_{v\in\archplaces}(a_v, b_v]\subseteq [0,\infty)^\archplaces$,
$V_{w,i}:=\vol\{(s,t)\in K_w^2\where \abs{F_i(s,t)}_w\leq 1\} 
\hspace{-0,1cm}
< 
\hspace{-0,1cm}
\infty$
and
consider the measurable functions 
  \begin{align*}
    &\Phi_{i} : K_\infty^2 \to [0,\infty)^\archplaces,\quad (s,t) \mapsto (\abs{F_i(s_w,t_w)}_w^{2m_w/{d_i}})_{w\in\archplaces},\\
    &\Phi_{s} : K_\infty \to [0,\infty)^\archplaces,\quad s \mapsto (\abs{s_w}_w^{m_w})_{w\in\archplaces},\\
    &\Phi_{\Omega',\Omega''} : \prod_{w\in\Omega'}K_w^2\cdot\prod_{w\in\Omega''}K_w \to [0,\infty)^\archplaces,
\
((s_w,t_w)_w, (s_w)_w) \mapsto ((\abs{s^2+t^2}_w^{m_w})_{w\in\Omega'},(\abs{s}_w^{m_w})_{w\in\Omega''}).  
  \end{align*}
By homogeneity
we see that 
$  \vol \Phi_i^{-1}(C)$ equals 
\begin{align*}
&\prod_{w\in\archplaces}\vol\{(s_w,t_w)\in K_w^2 \where  a_w < \absv{F_i(s_w,t_w)}^{2m_w/d_i} \leq b_w\}\\
=&\prod_{w\in\archplaces}V_{w,i}(b_w-a_w) = \left(\prod_{w\in\archplaces}V_{w,i}\right)\cdot \vol C.
\end{align*}
In like manner, letting
$V_{w,s}:=\vol\{s\in K_w\where \abs{s}_w\leq 1\} < \infty$
and
   \[V_{w,s^2+t^2}:=\vol\{(s,t)\in K_w^2\where \abs{s^2+t^2}_w\leq 1\},\]
we observe that $V_{w,s^2+t^2}$ is finite if $w$ is a real place
and
\begin{align*}
  \vol \Phi_s^{-1}(C) &= \left(\prod_{w\in\archplaces}V_{w,s}\right)\cdot \vol C,
\\
  \vol \Phi_{\Omega',\Omega''}^{-1}(C) &= \left(\prod_{w\in\Omega'}V_{w,s^2+t^2}\cdot\prod_{w\in\Omega''}V_{w,s} \right)\cdot \vol C.
\end{align*}
This shows that the pushforward measures $\Phi_{i,*}(\vol)$, $\Phi_{s,*}(\vol)$, $\Phi_{\Omega',\Omega'',*}(\vol)$ are constant multiples of the Lebesgue measure on $[0,\infty)^\archplaces$. Let
$\mathcal{H}(T) $ be given by
\begin{equation*}
 \big\{(x_w)_{w\in\archplaces}\where x_w\geq 1 \text{ for all }w \text{ and }\prod_{w\in\archplaces}x_w < T\big\}.
\end{equation*}
Then $\vol\mathcal{H}(T) = I_{q}(T)$,
$\mathcal{D}_i^*(Z)= \Phi_i^{-1}(\mathcal{H}(Z^{2/d_i}))$,
$\mathcal{D}_s^*(Z)= \Phi_s^{-1}(\mathcal{H}(Z))$,
as well as
$
\mathcal{D}_{\Omega',\Omega''}^*(Z) =\Phi_{\Omega',\Omega''}^{-1}(\mathcal{H}(Z))
$,
from which 
the lemma flows immediately.
\end{proof}
For $1\leq i\leq n$,
$1\leq Z_1\leq Z_2$
and
$X\geq 1$
let
\begin{equation*}
  \mathcal{R}_{i}(X;Z_1,Z_2) := \left\{(s,t)\in X^{1/m}\mathscr{D}\where
  \begin{aligned}
    &\abs{H_w(s_w,t_w)}_w\geq 1\ \forall w\in\archplaces\ \forall
    H_w\in\mathcal{H}_w\\ &Z_1 \leq \norm(F_i(s,t))< Z_2
  \end{aligned}
\right\}.
\end{equation*}

\begin{lemma}\label{lem:vol of difference}
Denoting $|\archplaces|$
by $q$ we have  
  \begin{equation*}
    \vol\mathcal{R}_{i}(X;Z_1, Z_2) \ll
    \begin{cases}
      X(I_q(Z_2)-I_q(Z_1)) &\text{ if } d_i = 1\\
      I_q(Z_2^{2/d_i})-I_q(Z_1^{2/d_i}) &\text{ if } d_i \geq 3.
    \end{cases}
  \end{equation*}
  If $d_i = 2$, let $\Omega'$ be the set of  real $w\in\archplaces$ for which $F_i$ is irreducible over $K_w$ and
define
$\Omega'':=\archplaces\smallsetminus\Omega'$. Then
$ \vol\mathcal{R}_{i}(X;Z_1, Z_2)$
is bounded by 
  \begin{equation*}
    \ll\int_{\substack{t_w\in K_w\ \forall w\in\Omega''\\\abs{t_w}_w\geq 1\ \forall w\in \Omega''}}\left(I_{q}(Z_2\prod_{w\in\Omega''}\abs{t_w}_w^{-m_w}) - I_{q}(Z_1\prod_{w\in\Omega''}\abs{t_w}_w^{-m_w})\right) \prod_{w\in\Omega''}\mathrm{d}t_w.
  \end{equation*}
\end{lemma}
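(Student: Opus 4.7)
The plan is to treat the three degree ranges separately, in each case reducing the estimation to one of the model regions computed in Lemma~\ref{lem:technical volumes} by means of a $K_w$-linear change of variables that depends only on $F_i$ and hence has a Jacobian bounded independently of $X$, $Z_1$, $Z_2$. For $d_i\geq 3$ the argument is essentially immediate: the inclusion $\mathcal{R}_i(X;Z_1,Z_2)\subseteq\mathcal{D}_i^*(Z_2)\smallsetminus\mathcal{D}_i^*(Z_1)$ holds, since $F_i\in\mathcal{H}_w$ at every archimedean $w$ gives $\abs{F_i(s_w,t_w)}_w\geq 1$, and Lemma~\ref{lem:technical volumes} then supplies the bound $I_q(Z_2^{2/d_i})-I_q(Z_1^{2/d_i})$ without any intervention of the box $X^{1/m}\mathscr{D}$, because $\mathcal{D}_i^*(Z)$ already has finite volume.

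For $d_i=1$ I would write $F_i=as+bt$ and apply simultaneously at every $w\in\archplaces$ the $K_w$-linear change of variables $(s_w,t_w)\mapsto(u_w,v_w)=(F_i(s_w,t_w),t_w)$. Under it the box $X^{1/m}\mathscr{D}$ is absorbed into $\abs{u_w}_w,\abs{v_w}_w\ll X^{1/m}$, and the conditions $F_i,t\in\mathcal{H}_w$ become $\abs{u_w}_w,\abs{v_w}_w\geq 1$. Upper bounding by dropping every remaining constraint from $\mathcal{H}_w$ makes the volume factor as a product of the $v$-integral, which is $\ll X$, with the $u$-integral, which is exactly $\vol\mathcal{D}_s^*(Z_2)-\vol\mathcal{D}_s^*(Z_1)=c_s(I_q(Z_2)-I_q(Z_1))$ by Lemma~\ref{lem:technical volumes}.

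The main obstacle is the case $d_i=2$. At each $w\in\Omega''$ one exploits the chosen factorization $F_i=G_{i,w}H_{i,w}$ into linear $K_w$-forms, performing the linear change $(s_w,t_w)\mapsto(s'_w,t'_w):=(G_{i,w}(s_w,t_w),H_{i,w}(s_w,t_w))$, while the original coordinates are kept at the places of $\Omega'$. Since $G_{i,w},H_{i,w}\in\mathcal{H}_w$, the inequalities $\abs{s'_w}_w,\abs{t'_w}_w\geq 1$ survive, and by construction $\abs{F_i(s_w,t_w)}_w=\abs{s'_wt'_w}_w$ on $\Omega''$, whereas $\abs{F_i(s_w,t_w)}_w$ is a constant multiple of $\abs{s_w^2+t_w^2}_w$ on $\Omega'$ after a further $K_w$-linear change absorbed into the Jacobian. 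Freezing the variables $(t'_w)_{w\in\Omega''}$ and integrating the remaining coordinates first, the norm condition reads
\begin{equation*}
Z_1\prod_{w\in\Omega''}\abs{t'_w}_w^{-m_w}\leq\prod_{w\in\Omega'}\abs{s_w^2+t_w^2}_w^{m_w}\cdot\prod_{w\in\Omega''}\abs{s'_w}_w^{m_w}<Z_2\prod_{w\in\Omega''}\abs{t'_w}_w^{-m_w},
\end{equation*}
so that the inner volume is, up to a constant, the difference of $\vol\mathcal{D}^*_{\Omega',\Omega''}$ at the two shifted parameters, which Lemma~\ref{lem:technical volumes} evaluates as $I_q$ of the corresponding quantities. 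Renaming $t'_w$ to $t_w$ produces the stated bound. The delicate bookkeeping one has to perform is to check that the piecewise changes on $\Omega'$ and $\Omega''$ are compatible, that their Jacobians depend only on $F_i$, and that all lower bounds originating from $\mathcal{H}_w$ but not explicitly used can be harmlessly discarded in an upper bound.
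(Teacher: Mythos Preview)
Your proof is correct and follows essentially the same route as the paper's own argument: the same inclusion $\mathcal{R}_i\subseteq\mathcal{D}_i^*(Z_2)\smallsetminus\mathcal{D}_i^*(Z_1)$ for $d_i\geq 3$, the same linear change $(s,t)\mapsto(F_i(s,t),t)$ for $d_i=1$, and the same placewise transformation (to $s^2+t^2$ on $\Omega'$ and to $(G_{i,w},H_{i,w})$ on $\Omega''$) followed by Fubini in the $(t_w)_{w\in\Omega''}$ variables for $d_i=2$, each time invoking Lemma~\ref{lem:technical volumes}. The only cosmetic difference is that the paper makes the exact normalisation $F_i(L_w(s,t))=s^2+t^2$ at places of $\Omega'$, whereas you phrase it as ``a constant multiple absorbed into the Jacobian''; both are fine for an upper bound.
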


\begin{proof}
We deploy Lemma \ref{lem:technical volumes} throughout the proof. Assume first that $d_i\geq 3$. Then
\begin{align*}
  \vol\mathcal{R}_{i}(X;Z_1, Z_2) \ll \vol(\mathcal{D}_i^*(Z_2)\smallsetminus\mathcal{D}_i^*(Z_1)) = c_i(I_q(Z_2^{2/d_i})-I_q(Z_1^{2/d_i})).
\end{align*}
Next, assume that $d_i=1$. Since $F_i$ is not proportional to $t$, the linear transformation $L:K^2\to K^2$ given by $L(s,t)=(F_i(s,t),t)$ is invertible and provides us with the estimate
\begin{align*}
\vol\mathcal{R}_{i}(X;Z_1,Z_2) &\ll \vol\{(s,t)\in K_\infty^2 \where \abs{s_w}_w \geq 1,\ \abs{t_w}_w \ll X^{1/m}\ \forall w\text{ and }Z_1 < \norm(s)\leq Z_2\}\\ &\ll X\vol( \mathcal{D}_s^*(Z_2)\smallsetminus\mathcal{D}_s^*(Z_1))\ll X (I_q(Z_2)-I_q(Z_1)). 
\end{align*}

We are left with the case $d_i=2$. For each $w\in\Omega'$, there is a linear transformation $L_w : K_w^2 \to K_w^2$ such that $F_i(L_w(s,t)) = s^2+t^2$. For $w\in\Omega''$, we have $F_i = G_{i,w}H_{i,w}$ for linear forms $G_{i,w},H_{i,w} \in \mathcal{H}_w$. The linear map $K_w^2 \to K_w^2$, $(s,t) \mapsto (G_{i,w}(s,t),H_{i,w}(s,t))$  has an inverse $L_w$
because $F_i$ is separable. 
We combine all these linear maps to an invertible $\R$-linear map $L = (L_w)_{w\in\archplaces} : K_\infty^2 \to K_\infty^2$, which we apply to obtain
\begin{align*}
  &\vol\mathcal{R}_i(X;Z_1,Z_2) \ll \vol\left\{(s,t)\in K_\infty^2\where 
  \begin{aligned}
    &\abs{s_w^2+t_w^2}_w\geq 1 \text{ for all }w\in\Omega'\\
    &\abs{s_w}_w,\abs{t_w}_w \geq 1 \text{ for all }w\in\Omega''\\
    &Z_1 < \prod_{w\in\Omega'}\abs{s_w^2+t_w^2}_w^{m_w}\prod_{w\in\Omega''}\abs{s_wt_w}_w^{m_w} \leq Z_2
  \end{aligned}
\right\}\\
&=\int_{\substack{t_w\in K_w\ \forall w\in\Omega''\\\abs{t_w}_w\geq 1\ \forall w\in \Omega''}}\vol\left(\mathcal{D}_{\Omega',\Omega''}^*(Z_2\prod_{w\in\Omega''}\abs{t_w}_w^{-m_w}) \smallsetminus \mathcal{D}_{\Omega',\Omega''}^*(Z_1\prod_{w\in\Omega''}\abs{t_w}_w^{-m_w})\right) \prod_{w\in\Omega''}\mathrm{d}t_w.
\end{align*}
\end{proof}

\begin{lemma}\label{lem:volume asympt}
For each
$\vpsi\in\{0,1\}^n$ 
we have 
\begin{equation*}
\omega_{\vpsi}^*(X; (1,\ldots, 1))=X^2\vol(\mathscr{D})+O_\epsilon(X^{2-1/m}+X^{3/2+\epsilon}).
\end{equation*}
\end{lemma}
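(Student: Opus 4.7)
The plan is to express $\omega_{\vpsi}^*(X;(1,\ldots,1))$ as $\vol(X^{1/m}\mathcal{D})$ minus the volume of what we removed, and then bound the removal by a sum of two kinds of error terms. Precisely, $\mathcal{D}_{\vpsi}^*(X;(1,\ldots,1))$ is obtained from $X^{1/m}\mathcal{D}$ by excising two types of regions:
\begin{enumerate}
\item[(i)] the set $A_1$ where some $|H_w(s_w,t_w)|_w<1$ for $w\in\archplaces$, $H_w\in \mathcal{H}_w$;
\item[(ii)] for each $i$ with $\psi_i=1$, the set $A_{2,i}$ where $\norm(F_i(s,t))<\sqrt{Y_i}\norm\www_i$.
\end{enumerate}
Since $\vol(X^{1/m}\mathcal{D})=X^2\vol(\mathcal{D})$ supplies the main term, it suffices to bound $\vol(A_1)$ and $\vol(A_{2,i})$.

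For (i), the lemma preceding the definition of $\mathcal{D}_{\vpsi}^*$ already gives $\vol\mathcal{D}_{H_w,w}^<(X)\ll X^{2-m_w/m}$, and since the $\mathcal{H}_w$ are finite sets for the finitely many archimedean places, summing yields $\vol(A_1)\ll X^{2-1/m}$, where the worst place is a real one with $m_w=1$.

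For (ii), we note that $\sqrt{Y_i}\norm\www_i\ll X^{d_i/2}$, so $A_{2,i}\subseteq \mathcal{R}_i(X;1,Z_i)$ with $Z_i\ll X^{d_i/2}$, and we apply Lemma~\ref{lem:vol of difference}. For $d_i\geq 3$, the bound is $\ll I_q(Z_i^{2/d_i})\ll I_q(X)\ll X(\log X)^{q-1}$. For $d_i=1$, it is $\ll X\cdot I_q(Z_i)\ll X\cdot X^{1/2}(\log X)^{q-1}\ll X^{3/2+\epsilon}$, which is the critical (and only genuinely non-trivial) case. For $d_i=2$, the integral bound combined with the constraint $|t_w|_w\ll X^{1/m}$ coming from $(s,t)\in X^{1/m}\mathcal{D}$ and the elementary estimate $\int_{1\leq |t_w|_w\ll X^{1/m}}|t_w|_w^{-m_w}\,\mathrm{d}t_w\ll \log X$ at each place yields $\ll X(\log X)^{O(1)}\ll X^{1+\epsilon}$.

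Combining, $\sum_i\vol(A_{2,i})\ll X^{3/2+\epsilon}$, so the total discarded volume is $\ll X^{2-1/m}+X^{3/2+\epsilon}$, which finishes the proof. The main technical point is keeping track of the auxiliary restriction $|t_w|_w\ll X^{1/m}$ in the $d_i=2$ case so that the integral over the complex/real coordinates in $\Omega''$ truly converges at the rate $(\log X)^{O(1)}$; everything else is a direct application of the lemmas from Section~\ref{s:simplifi}.
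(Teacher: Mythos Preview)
Your argument is correct and follows the same route as the paper's proof: split off the $|H_w|_w<1$ locus (giving $X^{2-1/m}$), then bound each norm-constraint region via Lemma~\ref{lem:vol of difference} case by case in $d_i$. One small inaccuracy: the inclusion $A_{2,i}\subseteq\mathcal{R}_i(X;1,Z_i)$ is not literally true, since $\mathcal{R}_i$ carries the additional constraints $|H_w|_w\geq 1$; what actually holds is $A_{2,i}\smallsetminus A_1\subseteq\mathcal{R}_i(X;1,Z_i)$, which is all you need because you are bounding $\vol(A_1\cup\bigcup_iA_{2,i})$.
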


\begin{proof}
Let us begin by observing that 
\begin{align*}
  |X^2\vol(\mathscr{D})-\omega_{\vpsi}^*(X;(1,\ldots, 1)
)| &\ll \sum_{w\in\archplaces}\sum_{H_w\in\mathcal{H}_w}\vol\mathcal{D}_{H_w,w}^<(X) +\sum_{i=1}^n\vol\mathcal{R}_{i}(X;1,\sqrt{Y_i}\norm\www_i)\\
&\ll X^{2-1/m} + \sum_{i=1}^n\vol\mathcal{R}_i(X;1,\sqrt{Y_i}\norm\www_i).
\end{align*}
We now use Lemma \ref{lem:vol of difference} and Lemma \ref{lem:hyp vol} to estimate the $\vol\mathcal{R}_i(X;1,\sqrt{Y_i}\norm\www_i)$. If $d_i = 1$, then
$\vol\mathcal{R}_i(X;1,\sqrt{Y_i}\norm\www_i) \ll X \sqrt{Y_i}^{1+\epsilon}\ll X^{3/2+\epsilon}$,
while, if
$d_i \geq 3$, we acquire 
\begin{equation*}
  \vol\mathcal{R}_i(X;1,\sqrt{Y_i}\norm\www_i) \ll Y_i^{1/d_i+\epsilon}\ll X^{1+\epsilon}.
\end{equation*}
In the remaining case,
$d_i = 2$, we get
\begin{align*}
  \vol\mathcal{R}_i(X;1,\sqrt{Y_i}\norm\www_i) \ll \int_{\substack{t_w\in K_w\ \forall w\in\Omega''\\1\leq \abs{t_w}_w\ll \sqrt{Y_i}\ \forall w\in \Omega''}} \frac{\sqrt{Y_i}^{1+\epsilon}}{\prod_{w\in\Omega''}\abs{t_w}_w^{m_w}}\prod_{w\in\Omega''}\mathrm{d}t_w \ll \sqrt{Y_i}^{1+\epsilon} \ll X^{1+\epsilon}.
\end{align*}
\end{proof}
For a function $\omega : \R^n \to \R$ and $1\leq i\leq n$, we write $\Delta_{i}\omega(\vv) := \omega(\vv+\mathbf{e}_i) - \omega(\vv)$, where $\mathbf{e}_i$ is the $i$-th vector in the standard basis of $\R^n$. 
\begin{lemma}\label{lem:volume partial}
Let $\vpsi\in\{0,1\}^n$,  $1\leq i\leq n$
and $\b{v} \in \R^n$
be given such that
$v_j\in [0,\infty)$ for all $j\neq i$. Then $\omega_{\vpsi}^*(X;\vv)$, considered as a function of $v_i$, is non-increasing 
and
satisfies
\begin{equation}\label{eq:omega differences}
\Delta_{i}\omega_{\vpsi}^*(X;\vv) \ll X^{1+\epsilon}
  \begin{cases}
    \ X^{\frac{1}{2}} &\text{if } d_i=1, \\
    v_i^{\frac{2}{d_i}-1} & \text{otherwise}, \end{cases}
\end{equation}
in the interval $1\leq v_i\leq \sqrt{Y_i}$, with the implied constant independent of $\vv$ and $X$.
\end{lemma}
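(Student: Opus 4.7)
The plan is to first dispose of monotonicity by observing that the defining conditions of $\mathcal{D}_\vpsi^*(X;\vv)$ depend on $v_i$ only through the inequality $\norm(F_i(s,t))\geq \psi_iv_i\sqrt{Y_i}\norm\www_i$; when $\psi_i=0$ this is vacuous and when $\psi_i=1$ the region shrinks as $v_i$ grows. This same observation identifies $-\Delta_i\omega_\vpsi^*(X;\vv)$ with the volume of the shell $\mathcal{D}_\vpsi^*(X;\vv)\smallsetminus\mathcal{D}_\vpsi^*(X;\vv+\mathbf{e}_i)$, which is contained in $\mathcal{R}_i\bigl(X;v_i\sqrt{Y_i}\norm\www_i,(v_i+1)\sqrt{Y_i}\norm\www_i\bigr)$. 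So assuming $\psi_i=1$, it will suffice to bound $\vol\mathcal{R}_i(X;Z_1,Z_2)$ for $Z_1,Z_2$ of this form, for which Lemma~\ref{lem:vol of difference} is the principal tool.

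Next I would extract a uniform estimate for the differences of $I_q$ appearing in Lemma~\ref{lem:vol of difference}. Lemma~\ref{lem:hyp vol} gives $I_q(T)=TP_q(\log T)+(-1)^q$ with $\deg P_q=q-1$, so $I_q'(T)\ll(\log T)^{q-1}\ll T^\epsilon$ on $[1,\infty)$ and the mean value theorem yields
\[
I_q(A_2)-I_q(A_1)\ll(A_2-A_1)\,A_2^\epsilon,\qquad 1\le A_1\le A_2.
\]
For $d_i=1$, since $Y_i\ll X$ one has $Z_2-Z_1=\sqrt{Y_i}\norm\www_i\ll X^{1/2}$, so Lemma~\ref{lem:vol of difference} delivers $\vol\mathcal{R}_i\ll X\cdot X^{1/2}\cdot X^\epsilon$, matching $X^{3/2+\epsilon}$. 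For $d_i\ge 3$, applying the mean value theorem to $v\mapsto(v\sqrt{Y_i}\norm\www_i)^{2/d_i}$ produces $Z_2^{2/d_i}-Z_1^{2/d_i}\ll v_i^{2/d_i-1}Y_i^{1/d_i}\ll v_i^{2/d_i-1}X$, after which the $X^\epsilon$ loss from the derivative bound on $I_q$ completes the required estimate $X^{1+\epsilon}v_i^{2/d_i-1}$.

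The hardest case, and the only place where the archimedean geometry intervenes in a non-trivial way, is $d_i=2$ with $\Omega''\neq\emptyset$: Lemma~\ref{lem:vol of difference} then gives a bound involving an extra archimedean integration over the variables $t_w$ for $w\in\Omega''$, coming from the possible splitting of $F_i$ over some real completion. Writing $T:=\prod_{w\in\Omega''}\abs{t_w}_w^{m_w}$, one estimates $I_q(Z_2/T)-I_q(Z_1/T)\ll(Z_2-Z_1)T^{-1}X^\epsilon$, and the integration over each $t_w$ -- restricted by the constraint $T\le Z_2$ to $1\le\abs{t_w}_w\le Z_2^{1/m_w}$ -- contributes only a logarithmic factor $\log Z_2\ll X^\epsilon$. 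Combining these bounds with $\sqrt{Y_i}\ll X$ and the identity $v_i^{2/d_i-1}=1$ yields the claimed bound $X^{1+\epsilon}$. The main subtlety is ensuring that the logarithmic losses coming from the archimedean integrations and from $I_q'$ can all be absorbed into one $X^\epsilon$ factor; this follows from the uniform bound $T\le Z_2\ll X^{1+\epsilon}$ on the range of integration.
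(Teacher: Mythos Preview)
Your proof is correct and follows essentially the same route as the paper: reduce to the shell $\mathcal{R}_i(X;v_i\sqrt{Y_i}\norm\www_i,(v_i+1)\sqrt{Y_i}\norm\www_i)$, invoke Lemma~\ref{lem:vol of difference}, and control the resulting $I_q$-differences by the mean value theorem (using Lemma~\ref{lem:hyp vol} to get $I_q'(T)\ll T^\epsilon$). The paper applies the mean value theorem directly to the composite functions $V\mapsto I_q((\sqrt{Y_i}\norm\www_i V)^{2/d_i})$ etc., whereas you first isolate the universal bound $I_q(A_2)-I_q(A_1)\ll(A_2-A_1)A_2^\epsilon$ and then feed in the relevant $A_1,A_2$; this is purely cosmetic. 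For $d_i=2$ you actually make explicit the step the paper leaves tacit, namely that the archimedean integral $\int_{|t_w|_w\geq 1,\ T\leq Z_2}T^{-1}\prod_w\mathrm{d}t_w$ contributes only $(\log Z_2)^{|\Omega''|}\ll X^\epsilon$.
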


\begin{proof}
Monotonicity is obvious. Let us prove the estimate \eqref{eq:omega differences}. If $\psi_i=0$, then $\omega_{\vpsi}^*(X; \vv)$ is constant in $v_i$. Let $\psi_i=1$, then 
\begin{align*}
  |\omega_{\vpsi}^*(X; \vv + \ve_i) - \omega_{\vpsi}^*(X; \vv)| &\ll \mathcal{R}_i(X;\sqrt{Y_i}\norm\www_iv_i,\sqrt{Y_i}\norm\www_i(v_i+1)).
\end{align*}
Using Lemma \ref{lem:vol of difference} and the mean value theorem to bound the latter quantity, we obtain in
the
case $d_i=1$ that, for some $\tilde{v}_i \in [v_i,v_i+1]$,
\begin{align*}
  \Delta_i\omega_{\vpsi}^*(X;\vv) &\ll \frac{\partial}{\partial V} (X I_q(\sqrt{Y_i}\norm\www_iV))|_{V=\tilde{v}_i} \ll X\sqrt{Y_i} \frac{\partial}{\partial V}\left(VP_q(\log(\sqrt{Y_i}\norm\www_iV))\right)|_{V=\tilde{v}_i}\\ &\ll X^{3/2+\epsilon}.
\end{align*}
When $d_i\geq 3$, we get
\begin{align*}
  \Delta_i\omega_{\vpsi}^*(X;\vv) &\ll \frac{\partial}{\partial V} I_q((\sqrt{Y_i}\norm\www_iV)^{2/d_i})|_{V=\tilde{v}_i} \ll Y_i^{1/d_i}\frac{\partial}{\partial V}V^{2/d_i}P_q(2/d_i\log(\sqrt{Y_i}\norm\www_iV))|_{V=\tilde{v}_i}\\ &\ll X_i^{1+\epsilon} \tilde{v}_i^{2/d_i-1}.
\end{align*}
When 
$d_i=2$, the quantity
$\Delta_i\omega_{\vpsi}^*(X;\vv)$ 
is
\begin{align*}
 \ll \int_{\substack{t_w\in K_w\ \forall w\in\Omega''\\\abs{t_w}_w\geq 1\ \forall w\in \Omega''}}I_{q}(\sqrt{Y_i}\norm\www_i(v_i+1)\prod_{w\in\Omega''}\abs{t_w}_w^{-m_w}) - I_{q}(\sqrt{Y_i}\norm\www_iv_i\prod_{w\in\Omega''}\abs{t_w}_w^{-m_w})\prod_{w\in\Omega''}\mathrm{d}t_w.
\end{align*}
The integrand is zero, unless $\prod_{w\in\Omega''}\abs{t_w}_w^{\dg_w}\leq \sqrt{Y_i}\norm\www_i(v_i+1)$. In that case, the mean value theorem allows us to find for any $(t_w)_w$ a number $\tilde{v}_i \in (v_i,v_i+1)$, such that the integrand is
\begin{align*}
  & \frac{\partial}{\partial V}\left( I_q(\sqrt{Y_i}\norm\www_iV\prod_{w\in\Omega''}\abs{t_w}_w^{-m_w})\right)\big|_{V=\tilde{v}_i} \\
=\ &\frac{\partial}{\partial V}\left(\sqrt{Y_i}\norm\www_iV\prod_{w\in\Omega''}\abs{t_w}_w^{-m_w}P_q(\log(\sqrt{Y_i}\norm\www_iV\prod_{w\in\Omega''}\abs{t_w}_w^{-m_w}))\right)\big|_{V=\tilde{v}_i}\\
\ll &\sqrt{Y_i}X^\epsilon\prod_{w\in\Omega''}\abs{t_w}_w^{-m_w}\ll X^{1+\epsilon}\prod_{w\in\Omega''}\abs{t_w}_w^{-m_w}.
\end{align*}
This shows that
$\Delta_i\omega_{\vpsi}^*(X;\vv) \ll X^{1+\epsilon}$,
which concludes our proof.
\end{proof}

\subsection{The ending moves towards
Theorem~\ref{thm:main 2}}
\label{s:ending}
We are now ready to estimate the sum 
$\mathcal{M}_{\vpsi}$
that was 
introduced in~\eqref{eq:def cal mpsi}.
\begin{lemma}
  Let $\delta := \max_{1\leq i\leq n}\{4+8m\deg F_i\}$. For any $0\leq i \leq n$, there are functions 
${\upgamma^{(i)}}, {\updelta_1^{(i)}}, \ldots, {\updelta_i^{(i)}} \in \mathcal{Z}_K$, and a positive constant $\mu^{(i)}$, such that
  \begin{align}
\nonumber    \mathcal{M}_{\vpsi}&=\mu^{(i)}\one_{\upgamma^{(i)}}(\ddd)\hspace{-0.3cm}\sum_{\substack{\norm\ccc_1\leq \sqrt{Y_1}\\\ccc_1+\ddd\www=\OO_K}}\hspace{-0.2cm}\frac{\rho_1(\ccc_1)\one_{\updelta_1^{(i)}}(\ccc_1)}{\norm\ccc_1}
\cdots\hspace{-0.7cm} \sum_{\substack{\norm\ccc_i\leq \sqrt{Y_i}\\\ccc_i+\ccc_1\cdots \ccc_{i-1}\ddd\www=\OO_K}}\hspace{-0.7cm}\frac{\rho_i(\ccc_i)\one_{\updelta_i^{(i)}}(\ccc_i)}{\norm\ccc_i}\omega_{\vpsi}^*(X;(\norm\ccc_1,\ldots,\norm\ccc_i,1,\ldots,1))\\ &+ O_\epsilon(\norm\ddd^\epsilon X^{2-1/\delta+\epsilon}).\label{eq:mpsi expanded}    
  \end{align}
\end{lemma}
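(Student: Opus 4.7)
The plan is to prove \eqref{eq:mpsi expanded} by downward induction on $i$, starting at $i = n$, where it reduces tautologically to the definition of $\c M_\vpsi$ in \eqref{eq:def cal mpsi} by virtue of the factorisation of $g(\vic)$ from \eqref{eq:def f c} together with the identity $g_i(\ppp^k) = \rho_i(\ppp^k)\one_\updelta(\ppp^k)$ obtained immediately before \eqref{eq:def f c}. At this base step, take $\mu^{(n)} := 1$, $\updelta_j^{(n)} := \updelta$ for all $j$, and $\upgamma^{(n)}$ with $\one_{\upgamma^{(n)}} \equiv 1$; the nested coprimality conditions are readily seen to be equivalent to the pairwise ones used in \eqref{eq:def cal mpsi}.

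For the inductive step $i\to i-1$, fix $\ccc_1, \ldots, \ccc_{i-1}$ and write $\ccc' := \ccc_1\cdots\ccc_{i-1}\ddd$. The innermost sum
$$T(\vic') := \sum_{\substack{\norm\ccc_i \leq \sqrt{Y_i}\\\ccc_i + \ccc'\www = \OO_K}}\frac{\rho_i(\ccc_i)\one_{\updelta_i^{(i)}}(\ccc_i)}{\norm\ccc_i}\,\omega^*_\vpsi\bigl(X;(\norm\ccc_1,\ldots,\norm\ccc_{i-1},\norm\ccc_i,1,\ldots,1)\bigr)$$
is treated by grouping $\ccc_i$ by norm and applying Lemma~\ref{lem:abel discrete}. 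The partial-sum hypothesis (3) follows from Lemma~\ref{lem:twisted sum}, applied with $\rho = \rho_i$, $f = \updelta_i^{(i)}$, $\ccc = \ccc'$, $\lambda = 1 + 2m\deg F_i$, whose analytic input is provided by Lemma~\ref{lem:artinakos}; the multiplicative conditions \eqref{eq:twisted sum cond 5}--\eqref{eq:twisted sum cond 6} are arranged by absorbing finitely many small primes into $\www_0$. This yields $\lambda_0 = D_{\rho_i}(1)\beta\one_\phi(\ccc')$ for some $\phi \in \c Z_K$, with $A = 1/(2\lambda) - \epsilon$ and $M \ll \norm(\ccc')^\epsilon$. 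The weight hypothesis (2) is supplied by Lemma~\ref{lem:volume partial}: $Q \ll X^{3/2+\epsilon}$, $B = 0$ when $d_i = 1$, and $Q \ll X^{1+\epsilon}$, $B = \max\{0,1-2/d_i\}-\epsilon$ otherwise.

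Lemma~\ref{lem:abel discrete} then furnishes
$$T(\vic') = \lambda_0\,\omega^*_\vpsi\bigl(X;(\norm\ccc_1,\ldots,\norm\ccc_{i-1},1,1,\ldots,1)\bigr) + O\bigl(MQ\,Y_i^{(1-A-B)/2}\bigr),$$
and a case check over $d_i\in\{1,2,\geq 3\}$, using $Y_i \asymp X^{d_i}$, shows this error is $\ll \norm(\ccc')^\epsilon X^{2-1/\delta+\epsilon}$, with the worst case $d_i = 1$ producing the exponent $2-1/(4+8m)$ that pins down the definition of $\delta$. Pairwise coprimality of $\ccc_1,\ldots,\ccc_{i-1},\ddd$ factors $\one_\phi(\ccc') = \one_\phi(\ddd)\prod_{j<i}\one_\phi(\ccc_j)$, and the group structure of $\c U_K$ yields $\upgamma^{(i-1)}, \updelta_j^{(i-1)} \in \c Z_K$ satisfying $\one_{\upgamma^{(i-1)}} = \one_{\upgamma^{(i)}}\one_\phi$ and $\one_{\updelta_j^{(i-1)}} = \one_{\updelta_j^{(i)}}\one_\phi$; setting $\mu^{(i-1)} := \mu^{(i)}D_{\rho_i}(1)\beta$ then produces the step-$(i-1)$ main term. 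Summation of the error over $\ccc_1,\ldots,\ccc_{i-1}$, using $\rho_j(\aaa),\one_{\updelta_j^{(i)}}(\aaa) \ll_\epsilon \norm\aaa^\epsilon$ together with $\norm(\ccc') \ll \norm\ddd\prod_j\norm\ccc_j$, contributes only an $X^{O(\epsilon)}$-factor, preserving the overall bound $\norm\ddd^\epsilon X^{2-1/\delta+\epsilon}$.

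The chief difficulties are verifying the uniform multiplicative bounds \eqref{eq:twisted sum cond 5}--\eqref{eq:twisted sum cond 6} required by Lemma~\ref{lem:twisted sum} (a one-time enlargement of $\www_0$ to contain enough small primes) and checking that the Abel error $MQ\,Y_i^{(1-A-B)/2}$ fits under the single exponent $2 - 1/\delta + \epsilon$ across all regimes for $d_i$; it is exactly this balance that pins down the formula $\delta := \max\{4+8m\deg F_i\}$.
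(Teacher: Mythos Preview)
Your proof is correct and follows essentially the same approach as the paper: backward induction on $i$, with Lemma~\ref{lem:twisted sum} (fed by Lemma~\ref{lem:artinakos}, after enlarging $\www_0$) supplying the partial-sum asymptotic, and Lemma~\ref{lem:abel discrete} together with the difference bounds of Lemma~\ref{lem:volume partial} handling the weighted volume factor. One small remark: it is not quite that ``the worst case $d_i=1$ pins down $\delta$''; rather, the paper bounds the step-$i$ error by $X^{2-1/(4\lambda_i)}$ with $4\lambda_i = 4+8m d_i$, and taking the maximum over $i$ of these coarser exponents is what gives $\delta = \max_i(4+8m\deg F_i)$.
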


\begin{proof}
For $i=n$
our lemma holds
with vanishing error term by the definition
of
$g$ in \eqref{eq:def f c}. 
We proceed by
backward induction
from $i$ to $i-1$.
Lemma \ref{lem:twisted sum} provides the existence of $\beta^{(i)}>0$ and 
${\upgamma'^{(i)}}\in\mathcal{Z}_K$ such that, for all $U\geq 1$,
\begin{equation}\label{eq:twisted sum application}
  \sum_{\substack{\norm\ccc_i\leq U\\\ccc_i+\ccc_1\cdots \ccc_{i-1}\ddd\www=\OO_K}}\hspace{-0.7cm}\frac{\rho_i(\ccc_i)\one_{\updelta_i^{(i)}}(\ccc_i)}{\norm\ccc_i} = \beta^{(i)}\one_{\upgamma'^{(i)}}(\ccc_1\cdots\ccc_{i-1}\ddd) + O_\epsilon(\norm(\ccc_1\cdots\ccc_{i-1}\ddd)^\epsilon U^{-1/(2\lambda)+\epsilon}),
\end{equation}
where $\lambda = 1+2m\deg F_i$. Indeed, the hypotheses of Lemma \ref{lem:twisted sum} are satisfied by Lemma \ref{lem:artinakos} and Hensel's lemma, once we ensure that $\www_0$, and hence $\www$, is divisible by enough small prime ideals.

We write $\omega(\theta):=\omega_\vpsi^*(X; (\norm\ccc_1,\ldots,\norm\ccc_{i-1},\theta,1,\ldots,1))$. Assume first that $\deg F_i=1$. In this case, the bounds~\eqref{eq:omega differences}
and~\eqref{eq:twisted sum application} allow us to apply Lemma \ref{lem:abel discrete} with
$A= 1/(2\lambda)$, $B= 0$,
\[
  M \ll_\epsilon \norm(\ccc_1\cdots\ccc_{i-1}\ddd)^\epsilon X^\epsilon
 \
\
\text{and}
\
\
 Q \ll_\epsilon  X^{3/2+\epsilon}  
,\]
thus leading to
\begin{align}\label{eq:last sum estimate}
\sum_{\substack{\norm\ccc_i\leq \sqrt{Y_i}\\\ccc_i+\ccc_1\cdots \ccc_{i-1}\ddd\www=\OO_K}}\hspace{-0.7cm}\frac{\rho_i(\ccc_i)\one_{\updelta_i^{(i)}}(\ccc_i)}{\norm\ccc_i}\omega(\norm\ccc_i) &= \beta^{(i)}\one_{\upgamma^{(i)'}}(\ccc_1\cdots\ccc_{i-1}\ddd)\omega(1)\nonumber\\ &+ O_\epsilon\left(\norm(\ccc_1\cdots\ccc_{i-1}\ddd)^\epsilon X^{2-1/(4\lambda)+\epsilon}\right). 
\end{align}
If $\deg F_i\geq 2$, we use Lemma \ref{lem:abel discrete} with the same bounds for $M,A$ and
\[
  Q\ll_\epsilon X^{1+\epsilon},
\quad B = 1-2/(\deg F_i)
\]
to obtain an estimate identical to~\eqref{eq:last sum estimate}. 
Injecting this in \eqref{eq:mpsi expanded} proves our claim for $i-1$.
\end{proof}
The case $i=0$ of the last lemma shows that 
$\mathcal{M}_\psi = 
\mu^{(0)} \one_{\gamma^{(0)}}(\ddd)\vol\mathcal{D}X^2 + O(\norm\ddd^\epsilon X^{2-1/\delta+\epsilon})
$.
Conjuring up~\eqref{han1} and Lemma \ref{cramped} completes the undertaking of validating Theorem \ref{thm:main 2}.
\bibliographystyle{amsalpha}
\bibliography{fibration}
\end{document}